                    \def\version{\today}                       %
\def\@rmrk#1#2{\refstepcounter
    {#1}\@ifnextchar[{\@yrmrk{#1}{#2}}{\@xrmrk{#1}{#2}}}
\makeatletter\@addtoreset{equation}{section}\makeatother
 \newfont{\bfit}{cmbxti10 scaled 1200}
 \newcommand{\e}{{\rm e} }
 \newcommand{\eps}{\varepsilon}
 \newcommand{\supp}{{\rm supp}}
  \newcommand{\Aut}{{\rm Aut}}
 \newcommand{\diam}{{\rm diam}}
  \newcommand{\Poi}{{\rm Poi}}
 \newcommand{\Isom}{{\rm Isom}}
 \newcommand{\pv}{p_{\scriptscriptstyle{\rm conn}}}
 \newcommand{\pl}{\overline p_{\scriptscriptstyle{\rm conn}}}
 \newcommand{\R}{\mathbb{R}}
\newcommand{\N}{\mathbb{N}}
\newcommand{\Z}{\mathbb{Z}}
\newcommand{\C}{\mathbb{C}}
\newcommand{\F}{\mathbb{F}}
\newcommand{\E}{\mathbb{E}}
\renewcommand{\P}{\mathbb{P}}
\renewcommand{\H}{\mathbb{H}}
 \def\1{{\mathchoice {1\mskip-4mu\mathrm l} 
{1\mskip-4mu\mathrm l}
{1\mskip-4.5mu\mathrm l} {1\mskip-5mu\mathrm l}}}
 \newcommand{\skrib}{{\mathcal B}}
 \newcommand{\skric}{{\mathcal C}}
 \newcommand{\skrif}{{\mathcal F}}
 \newcommand{\skrig}{{\mathcal G}}
 \newcommand{\skrih}{{\mathcal H}}
 \newcommand{\skriv}{{\mathcal V}}
  \newcommand{\skriw}{{\mathcal W}}
\newcommand{\bP}{\mathbf{P}}
\newcommand{\bE}{\mathbf{E}}
\newcommand{\bzero}{\mathbf{0}}
\newcommand{\heap}[2]{\genfrac{}{}{0pt}{}{#1}{#2}}
\newcommand{\ssup}[1] {{\scriptscriptstyle{({#1}})}}
\newenvironment{example}{\refstepcounter{theorem}
{\bf Example \thetheorem\ }\nopagebreak  }%
{\nopagebreak {\hfill\rule{2mm}{2mm}}\\ }
\newtheorem{theorem}{Theorem}[section]
\newtheorem{lemma}[theorem]{Lemma}
\newtheorem{cor}[theorem]{Corollary}
\newtheorem{prop}[theorem]{Proposition}
\newtheorem{maintheorem}{Theorem}
\newtheorem{question}[theorem]{Question}
\newtheorem{problem}[theorem]{Problem}
\newcommand{\eproof}{\hfill \qed \vspace*{5mm}}
\newtheoremstyle{thm}{1.5ex}{1.5ex}{\itshape\rmfamily}{}
{\bfseries\rmfamily}{}{2ex}{}
\newtheoremstyle{rem}{1.3ex}{1.3ex}{\rmfamily}{}
{\itshape\rmfamily}{}{1.5ex}{}
\theoremstyle{rem}
\newtheorem{remark}{{\slshape\sffamily Remark}}[]
\def\thebibliography#1{\section*{References}
  \list%
  {\arabic{enumi}.}
    {\settowidth\labelwidth{[#1]}\leftmargin\labelwidth
    \advance\leftmargin\labelsep
    \parsep0pt\itemsep0pt
    \usecounter{enumi}}
    \def\newblock{\hskip .11em plus .33em minus .07em}
    \sloppy                   
    \sfcode`\.=1000\relax}
\begin{document}
\title[Haagerup property and Group-Invariant Percolation]
{\large Haagerup property and Group-Invariant Percolation}
\author[Chiranjib Mukherjee and Konstantin Recke]{}
\maketitle
\thispagestyle{empty}
\vspace{-0.5cm}

\centerline{\sc By Chiranjib Mukherjee\footnote{Universit\"at M\"unster, Einsteinstrasse 62, M\"unster 48149, Germany, {\tt chiranjib.mukherjee@uni-muenster.de}}
and Konstantin Recke\footnote{Universit\"at M\"unster, Einsteinstrasse 62, M\"unster 48149, Germany {\tt konstantin.recke@uni-muenster.de}}}
\renewcommand{\thefootnote}{}
\footnote{\textit{AMS Subject
Classification:} 22D55, 60K35, 82B43, 20F16, 60D05, 60B99, 20F67.}
\footnote{\textit{Keywords:} Haagerup property, percolation, group-invariance, Kazhdhan's Property (T), spaces with measured walls, two-point functions, amenability, lamplighter groups, co-compact Fuchsian groups.}

\vspace{-0.5cm}
\centerline{\textit{Universit\"at M\"unster}}
\vspace{0.2cm}

\begin{center}
\version
\end{center}

\begin{quote}{\small {\bf Abstract: } Let $\skrig$ be the Cayley graph of a finitely generated, infinite group $\Gamma$. We show that $\Gamma$ has the Haagerup property if and only if for every $\alpha<1$, there is a $\Gamma$-invariant bond percolation $\P$ on $\skrig$ with $\E[\deg_{\omega}(g)]>\alpha\deg_{\skrig}(g)$ for every vertex $g$ and with the two-point function $\tau(g,h)=\P\big[g\leftrightarrow h\big]$ vanishing as $d(g,h)\to\infty$. As an upshot, we also obtain a sufficient condition for the Haagerup property using Bernoulli percolation and the associated threshold $p_{\mathrm{conn}}$ of random connected subgraphs of $\mathcal G$. On the other hand, we show that $\Gamma$ has Kazhdan's property (T) if and only if there exists a threshold $\alpha^*<1$ such that for every $\Gamma$-invariant bond percolation $\P$ on $\skrig$, $\E[\deg_\omega(o)]>\alpha^*\deg(o)$ implies that the two-point function is bounded away from zero. These results in particular answer questions about characterizations of properties of groups beyond amenability through group-invariant percolations,  raised by Russell Lyons ({\em J. Math. Phys.} {\bf41} 1099-1126 (2000)). 

The method of proof is new and is based on a construction of percolations with suitable dependence structures built from invariant point processes on spaces with measured walls. In fact, we develop this new approach further to obtain quantitative estimates: we show that there is an explicit relationship between probabilistic quantities like large marginals and two-function decay of percolations and geometric features captured by growth of wall distances defined by invariant actions on spaces with measured walls.

We apply this relationship to obtain quantitative bounds on the two-point functions, exhibiting in particular  exponential decay of the two-point function in several prominent examples of Haagerup groups, including co-compact Fuchsian groups, co-compact discrete subgroups of $\mathrm{Isom}(\H^n)$ and lamplighters over free groups. 

This method also allows us to extend the aforementioned characterization of property (T) to the setting of relative property (T). As further applications of our methods, we give new proofs of the facts that for Bernoulli percolation at the uniqueness threshold $p_u$ there is no unique infinite cluster for groups with property~(T) as well as with relative property (T).}
\end{quote}


\setcounter{tocdepth}{2}
\setcounter{secnumdepth}{2}

\tableofcontents


\section{\bf Introduction and Main Results}\label{introduction}

Let $\Gamma$ be a finitely generated, infinite group and let $\skrig=(V,E)$ be the right Cayley graph with respect to a finite and symmetric set of generators and equipped with the word metric $d$. In this article, we study the relationship between the geometry of this graph and the behavior of its $\Gamma$-invariant random subgraphs, which are probabilistic objects known as {\em group-invariant percolations}.

More precisely, a probability measure on subsets of $E$ is called a {\em bond percolation} on $\skrig$. This probability measure is called {\em $\Gamma$-invariant} if it is invariant under the action of $\Gamma$.
The most important example of a group-invariant percolation is {\em $p$-Bernoulli bond percolation} in which edges are kept independently at random with fixed probability $p \in [0,1]$ (we refer to Section \ref{sec background percolation} for background on group-invariant percolations). For a bond percolation $\P$, we define the {\em two-point function} $\tau(g,h):= \P\big[g\leftrightarrow h\big]$
to be the probability that $g$ and $h$ are in the same cluster, i.e.~connected component of the induced subgraph of the percolation configuration.

In a seminal paper, Benjamini, Lyons, Peres and Schramm \cite{BLPS99} initiated the study of group-invariant percolations, relating these models with the geometry of the underlying graph and also showing that these are useful tools for understanding classical models such as Bernoulli percolation. Concretely, developing and using the mass-transport principle, the following characterization of amenability of the underlying group was obtained there:

\begin{maintheorem}[Group-invariant percolation and amenability, cf.~{\cite[Theorem 1.1]{BLPS99}}]\label{theorem-AmenableGroups}
Let $\skrig$ be the Cayley graph of a finitely generated group $\Gamma$. Then $\Gamma$ is amenable if and only if for every $\alpha<1$, there exists a $\Gamma$-invariant bond percolation $\P$ on $\skrig$ with
 \begin{equation}\label{eq-BLPS}
\E[\deg_\omega(o)]>\alpha \deg(o)
\end{equation}
and with no infinite clusters. (Here, $\deg_\omega(0)$ is the degree of the identity element $o\in \Gamma$ in the induced subgraph of $\omega$ and $\deg(0)$ is the usual degree of $o$ in the Cayley graph $\mathcal G$).
\end{maintheorem}

In a beautiful survey from 2000 \cite[Sec.~10, p.~1122-1123]{L00}, 
Lyons suggested the problem of searching for characterizations of other classes of groups through group-invariant percolation as part of a program aiming to understand the interplay between geometric group theory and probability. In fact, it was noted there that previous characterizations had been limited to {\it amenability~only}. In particular, 
Lyons \cite[Sec.~10, p.~1123]{L00} also asked whether Kazhdan's property~(T) can be characterized through group-invariant percolation.

\subsection{Outline of Main Results and Methods.} The goal of this article is to answer these questions affirmatively by developing a novel and unifying approach to go beyond amenability. This approach leads to characterizations of the {\it Haagerup property} and of {\it Kazhdan's property (T)}, which unlike amenability, are not quasi-isometry invariant (see Remark \ref{remark-GeomIntHaagerup} and Remark \ref{remark-GeomIntKazhdan}). These properties are of utmost importance in geometric group theory and operator algebras; most notably, the Baum-Connes conjecture has been verified for groups with the Haagerup property \cite{HK01}, while Kazhdan's property~(T) provides an obstruction to extending this proof to the general conjecture (see Section~\ref{section-BackgroundHaagerupKazhdan} for background on these properties). 

At least as important as our characterization results is a new method of proof which we develop here: this is achieved by constructing percolations explicitly from invariant point processes on spaces with measured walls. This approach combines probabilistic and geometric intuition in a satisfactory way, is constructive and provides concrete applications and additional quantitative information along the way, as we will outline now in (A)-(D) below.

\begin{itemize}
\item[(A)] We provide the first probabilistic characterizations of the Haagerup property through invariant percolations. In particular, this property is given a genuinely probabilistic meaning.

The key new feature of this characterization is that it highlights the role of the competitive relationship between two properties a percolation may have, namely {\em large marginals} (in the sense of \eqref{eq-BLPS} for some $\alpha$ close to $1$)  and {\em two-point function decay} (a natural measurement of connectivity, cf.~\eqref{eq-vanish-tau} below). More precisely, we show that: 

\vspace{1mm}

\begin{itemize}
    \item[(1)] The Haagerup property is equivalent to the existence of invariant bond percolations with arbitrarily large marginals and two-point function vanishing at infinity (see Theorem \ref{maintheorem-Haagerup} for the precise statement).$^1$\footnote{$^1$Note that groups with the Haagerup property include in particular amenable groups. This is also reflected by the fact that, while having no infinite clusters (cf.~Theorem \ref{theorem-AmenableGroups}) clearly implies vanishing connectivity, the converse does not hold: in the case of the free group on two generators, which is the simplest example of a non-amenable group with the Haagerup property and where the Cayley graph is the four-regular tree, we have $p_c<1$ and for $p\in (p_c,1)$, there are infinitely many infinite clusters (so $p_u=1$), but the two-point function decays exponentially, see Example \ref{ex-intro1} for details.}
    
    \vspace{1mm}

As a concrete upshot of studying this competitive relationship, we also obtain a sufficient condition for the Haagerup property using {\it Bernoulli percolation}: we show that if there exists a $\Gamma$-invariant random spanning subgraph of $\skrig$ with the connectivity threshold $p_{\mathrm{conn}}=1$ a.s. (equivalently, if there exists a $\Gamma$-invariant random connected subgraph of $\skrig$ with $p_{\mathrm{conn}}=1$), then $\Gamma$ has the Haagerup property (see Theorem \ref{theorem-Spanning}).

    \item [(2)] A crucial advantage of our method of proof (see Section \ref{sec novel ideas} for an outline) is that it captures information beyond the Haagerup property because it can be made {\it quantitative}. Indeed, this is the route we will pursue  and develop an explicit relationship between probabilistic quantities, i.e.~large marginals and two-function decay of percolations, and geometric features calibrated by growth of wall distances defined by invariant actions on spaces with measured walls: concretely, we show in Theorem \ref{theorem-GeneralConstruction} that if $\Gamma$ has an invariant action on a space $(X,\mathcal W,\mathcal B,\mu)$ with measured walls (cf.~Section \ref{subsection-MeasuredWalls}), then there is a $\Gamma$-invariant bond percolation $\P$ with the two properties:     
     \begin{equation}\label{quant}
      \begin{aligned}
    &\hspace{20mm}\E[\mathrm{deg}_\omega(o)]\geq \alpha_p \mathrm{deg}(o), \ \mbox{where}\,\, \alpha_p:=\exp\big(-(1-p) \max_{o\sim h} w(x_0, hx_0)\big) \in (0,1), \,\, \forall p\in (0,1),\\ 
      &\hspace{20mm}  \exp\Big(-(1-p) \max_{o\sim h} w(x_0, hx_0)\,\, |g|\Big) \leq \tau(o,g) \leq \exp\Big(-(1-p)  w(x_0, gx_0)\Big) \quad\forall g\in\Gamma.
      \end{aligned}
    \end{equation}

    Here, $w(x,y)=\mu(\mathcal W(x,y))$ denotes the wall distance between $x\in X$ and $y\in X$ and $x_0\in X$ is a base point -- see Theorem~\ref{theorem-GeneralConstruction} for a precise statement and Corollary \ref{cor-QuantitativeDecay} for its consequence.     It may be instructive to point out that the upper bound in \eqref{quant} does not depend on the choice of Cayley graph but the marginal and more generally the lower bound depend on this choice. The above quantitative relationship between probabilistic and geometric information will be instrumental in deriving many further applications, as we will outline now.  
    
    \item [(3)] As a first application, we study prominent examples of groups with the Haagerup property, namely lamplighters over free groups $\mathbb F_r$ and co-compact Fuchsian groups. 
   
    Concretely, we show that for any Cayley graph of $\Gamma:=H\wr \mathbb F_r$, where $H$ is a finite group, as well as for any planar, non-amenable graph with one end, there exist invariant percolations with arbitrarily large marginals and exponentially decaying two-point function (see Theorem~\ref{theorem4} and Theorem~\ref{theorem5}). 
    In both results, the quantitative estimate \eqref{quant} will play a very important role. 
    These and other examples highlighted throughout this article underscore the fact that our construction is concrete and provides one framework for several percolation models arising naturally.

     \end{itemize}
    
All these results about the Haagerup property will impart new probabilistic and geometric meaning, as underlined by Remark~\ref{remark-ProbIntHaagerup} and Remark \ref{remark-GeomIntHaagerup}. 
    
   \vspace{1mm}

    \item [(B)] 
    Next, using the above approach we characterize Kazhdan's property~(T) through connectivity of invariant percolations by showing that it is equivalent 
    to the existence of a non-trivial {\em threshold} such that every invariant bond percolation with marginals above this threshold has an infinite cluster with a {\it uniformly positive} probability of reaching {\it any given vertex} (see Theorem \ref{maintheorem-Kazhdan} for the precise statement and Remark \ref{remark-ProbIntKazhdan}-\ref{remark-GeomIntKazhdan} for its probabilistic, resp.~geometric, interpretation).$^2$\footnote{$^2$We note that, by Theorem \ref{theorem-AmenableGroups}, non-amenability is equivalent to the existence of a non-trivial threshold such that every invariant bond percolation with marginals above this threshold ''just'' has some infinite cluster with positive probability. This comparison underlines the fact that Kazhdan's property (T) is a particularly strong form of non-amenability.  
    
    We may also point out that Theorem \ref{maintheorem-Haagerup} does not follow from Theorem \ref{maintheorem-Kazhdan} nor vice versa. Indeed, there are groups which have neither Haagerup's nor Kazhdan's property.
    In the present context, this is related to the fact that two-point functions may fail to vanish  at infinity in (at least) two qualitatively different ways: they could be {\it uniformly} bounded away from zero (a phenomenon known as {\it long-range order}), or they could vanish along {\it some} direction(s), but remain bounded away from zero along other directions (known as {\it connectivity decay}) -- we refer to Remark~\ref{rem-CD-LRO}, Remark~\ref{rem-Haagerup-vs-Kazhdan} and Remark~\ref{rem-InBetween} for details and concrete examples of groups exhibiting such behavior.}
    
    Regarding property (T), we obtain the following additional information and applications:

    \vspace{1mm}
    
    \begin{itemize}

    \item [(1)] As a generalization of the current method, we also extend the above result to a characterization of groups with {\em relative property (T)}: we show that for a subgroup $H$ of $\Gamma$, the pair $(\Gamma,H)$ has relative property (T) if and only if there exists a threshold such that for every $\Gamma$-invariant bond percolation with marginals above this threshold has two-point function bounded away from zero on each left $H$-coset -- see Theorem \ref{theorem-RelativeKazhdanGroups} for details.  
    
    \item [(2)] As a further application of the current method to Bernoulli percolation, we give new proofs of the facts that, for groups with property (T) as well with relative property (T), there is no unique infinite cluster at the uniqueness threshold $p_u$ (see Theorem \ref{theorem-UniquenessRelativeKazhdan}). 
    This result is similar in spirit to the study of non-uniqueness at $p_u$ relative to suitable subgroups in~\cite{HP24} where several new examples of groups with non-uniqueness at $p_u$ are obtained and the systematic treatment of this class of groups is initiated.
    \end{itemize}

\vspace{1mm}

\item[(C)] The methods developed for the proofs are new: indeed, they are necessarily different from the literature on the amenable case because the approach of removing boundaries of F{\o}lner sets is not available. In particular, we do not use the mass-transport principle, which is a powerful tool for analyzing quantities of the form $\sum_{g\in \Gamma}\tau(o,g)$, but less useful for treating pointwise behavior of $\tau(o,g)$, $g\in \Gamma$. 

The present methods are also quite different from the dynamical characterizations of Kazhdan's property (T) due to Glasner and Weiss \cite{GW97} and related works~\cite{S80,CW80} (and their analogues for the Haagerup property). 
Indeed, as remarked earlier, we will introduce a new construction of percolations built from invariant Poisson point processes on suitably chosen infinite measure spaces. This method, which we will outline in Section \ref{sec novel ideas}, is quantitative (as outlined in (A2) above), robust (as demonstrated by the subsequent applications) and unifying in the sense that it will cover the constructive part of the proof of Theorem~\ref{theorem-AmenableGroups} about amenability. 

Let us also briefly comment on the main differences between the present characterizations and the existing dynamical characterizations of Glasner and Weiss. From the point of view of percolation theory, as already pointed out by Lyons \cite[Sec.~10, p.~1123]{L00}, the latter are ``abstract''. 
 We emphasize that in the present characterizations by percolations, questions about connectivity are primary, which distinguishes them from the results in \cite{GW97} and highlights the role of the geometry of the Cayley graph for both the Haagerup property and property (T). In particular, our result provides a concrete probabilistic meaning to the Haagerup property, which, despite its enormous importance in operator algebras and group theory has not been used or studied in the field of probability theory on groups, cf.~\cite{LP16,P22}.

\vspace{1mm}

\item [(D)] Finally, the current method regarding group-invariant percolation on Cayley graphs has been used and developed further by the authors in \cite{MR24} to the setting of general 
locally finite, connected graphs and their random subgraphs, where they are used to characterize coarse embeddability into Hilbert space and the (non-equivariant) $L^1$-compression exponents through percolation despite the possible lack of symmetry. Besides highlighting the applicability of the current method, these results additionally justify the quantitative approach alluded to in (A2) above. Indeed, there the $L^1$-compression exponent is shown to be captured by explicit bounds on the {\em rate of decay rate} of two-point functions with large marginals. In particular, the aforementioned competitive and quantified relationship between probabilistic and geometric features imparts an important meaning in a novel manner. This meaning holds true in the present setting of Cayley graphs and carries over to the general, possibly even non-invariant, setting.

In this vein, let us also mention that after the first version of this article appeared, the current method has inspired a novel approach to so-called small-ball estimates for random walks on groups \cite{H24}.
\end{itemize}

In the remaining part of this introduction, we will now provide a detailed description of the results outlined above. 

\subsection{Haagerup and Kazhdan Properties and Invariant Percolations.} \label{subsection-intro1}

For a bond percolation $\P$, define the {\em two-point function} 
\begin{equation}\label{def-tau}
\tau \colon \Gamma\times\Gamma\to[0,1] \, , \, \tau(g,h):= \P\big[g\leftrightarrow h\big] \coloneqq \P\big[ \omega\colon g\, \stackrel{\omega}\longleftrightarrow \, h\big]
\end{equation}
to be the probability that $g$ and $h$ are in the same cluster of the percolation configuration. Clearly, if a $\Gamma$-invariant bond percolation has no infinite clusters, then 
 the two-point function $\tau(o,\cdot)$ vanishes at infinity, written $\tau(o,\cdot)\in C_0(\Gamma)$ and meaning that
\begin{equation}\label{eq-vanish-tau}
\lim_{r\to\infty} \sup_{\heap{g,h \in \Gamma}{d(g,h)>r}} \, \tau(g,h)  = 0.
\end{equation}
As mentioned above, recall that this property does not imply having no infinite clusters. Our first main result shows that two-point function decay in the presence of large marginals is possible if and only if the underlying group has the Haagerup property: recall that $\Gamma$ has the {\em Haagerup property} (or is {\em a-T-menable} in the terminology of Gromov \cite{G93}) if it admits an affine isometric action on a Hilbert space, which is metrically proper (see Section \ref{section-BackgroundHaagerupKazhdan} for background and Theorem~\ref{theorem-HaagerupProperty} for characterizations used in our proofs).

\begin{theorem}[{\bf Group-Invariant Percolation and the Haagerup Property}] \label{maintheorem-Haagerup}
Let $\skrig$ be the Cayley graph of a finitely generated group $\Gamma$. Then $\Gamma$ has the Haagerup property if and only if for every $\alpha<1$, there exists a $\Gamma$-invariant bond percolation $\P$ on $\skrig$ such that $\E[\deg_\omega(o)]>\alpha\deg(o)$ and $\tau(o,\cdot)\in C_0(\Gamma)$.
\end{theorem}

We point out three related results:
\begin{itemize}
    \item First, there is a {\it quantitative version} (recall \eqref{quant}) specifying the decay of the two-point function, large marginals and wall distance (see Theorem \ref{theorem-GeneralConstruction} and its consequences in Section~\ref{sec-generalized}). 
    \item Second, as mentioned in (A1) before, Theorem \ref{maintheorem-Haagerup} implies a sufficient condition for the Haagerup property in terms of Bernoulli percolation on invariant random spanning subgraphs of $\skrig$ (see Theorem~\ref{theorem-Spanning}). 
    \item Third, invariant percolations with infinite clusters and vanishing two-point functions have infinitely many infinite clusters (see Proposition~\ref{prop:ConsequenceVanishingTPF}). 
    \end{itemize}

    Let us now underline the probabilistic and geometric content of Theorem \ref{maintheorem-Haagerup}:


\begin{remark}[{\bf Percolation Interpretation of the Haagerup Property}] \label{remark-ProbIntHaagerup} Theorem \ref{maintheorem-Haagerup} provides the following probabilistic interpretation of the Haagerup property: picture the Cayley graph as an infinite, connected network and consider the task of designing an invariant, random strategy of removing edges in order to disconnect the graph. Consider the strategy successful if vertices, which are far apart, are disconnected with high probability, say tending to one as the distance tends to infinity. As a reasonable constraint, we do not allow arbitrary removal of edges, but associate a cost to the removal in such a way that we allow to remove, on average, at most $1-\alpha$ percent of the edges at each vertex for some $\alpha<1$. Theorem \ref{maintheorem-Haagerup} asserts that there exists a successful strategy at arbitrarily small costs if and only if the group associated to the Cayley graph has the Haagerup property.  \nopagebreak {\hfill\rule{2mm}{2mm}}
\end{remark}

\begin{remark}[{\bf Geometric Interpretation of the Haagerup Property and Non-QI-Invariance}] \label{remark-GeomIntHaagerup}
Theorem \ref{maintheorem-Haagerup} also entails the following concrete insight about the geometry of groups with the Haagerup property: their Cayley graphs are such that vertices at large distances can be separated efficiently, i.e.~in the probabilistic sense of Remark \ref{remark-ProbIntHaagerup}. We emphasize a distinction between the intuitive sense of the word geometric and the sense in which it is frequently used in geometric group theory, where a property is called {\em geometric} if it is invariant under quasi-isometries ({\em QI-invariant}), see for instance \cite[Section~3.6]{BdlHV08}. The Haagerup property is not QI-invariant as shown by Carette \cite{C14}. From this perspective, Theorem~\ref{maintheorem-Haagerup} and the underlying geometric intuition described above may be surprising. 
There is no contradiction between the geometric nature of our characterization and the QI-non-invariance of the Haagerup property (and the same remark will apply to property~(T)) because given a quasi-isometry from one group to another, the push-forward of an invariant percolation need not be an invariant percolation w.r.t.~the other group action. In particular, for quasi-isometric groups, there need not exist an equivariant quasi-isometry which transports high marginal percolations with vanishing two-point function on one group to such percolations on the other.
Informally speaking, relaxing the static setting of the Cayley graph itself to the setting of invariant random subgraphs provides the appropriate amount of freedom to being able to capture the Haagerup property in terms of geometric properties of the graph. \nopagebreak {\hfill\rule{2mm}{2mm}}
\end{remark}

We now proceed to an important, {\em strong negation} of the Haagerup property: recall that $\Gamma$ has {\it property (T)} if there exist $\eps>0$ and a finite subset $K\subset\Gamma$ such that every unitary representation $\pi\colon\Gamma\to B(H)$ for which there exists a unit vector $\varphi$ with
\begin{equation} \label{def-Kazhdan}
    \sup_{s \in K} \, \Vert \pi(s)\varphi - \varphi \Vert < \eps,
\end{equation}
has a non-zero invariant vector (see \cite{BdlHV08} for background and \cite[Section 1.1.2(1)\&(2)]{CCJJV01} for characterizations used in our proofs). While not immediate from this definition, property (T) is a strong negation of both amenability and the Haagerup property. This will be underscored by the following probabilistic characterization of countable Kazhdan groups (every countable Kazhdan group is finitely generated, see e.g.~\cite[Theorem 1.3.1]{BdlHV08}), which is our second main result (see Theorem \ref{theorem-KazhdanGroups}).

\begin{theorem}[{\bf Group-Invariant Percolation and Kazhdan's Property (T)}] \label{maintheorem-Kazhdan}
Let $\skrig$ be the Cayley graph of a finitely generated group $\Gamma$. Then $\Gamma$ has property (T) if and only if there exists $\alpha^*<1$ such that for every $\Gamma$-invariant bond percolation $\P$ on $\skrig$, $\E[\deg_\omega(o)]>\alpha^*\deg(o)$ implies that $\tau(o,\cdot)$ is bounded away from zero, i.e.\
\begin{equation} \label{equ-LongRangeOrder}
\inf_{g,h\in\Gamma} \tau(g,h)>0.
\end{equation}
\end{theorem}

Theorem \ref{maintheorem-Kazhdan} answers affirmatively the question whether property (T) can be characterized through group-invariant percolations raised by Lyons in \cite[p.~1123]{L00}. It is important to point out two aspects. First, the direct consequence (i.e., necessity) of Kazhdan's property was known and is due to Lyons and Schramm \cite{LS99}. Second, the threshold $\alpha^*$ is not elusive and quantitative estimates in terms of Kazhdan pairs are available (see Section \ref{subsection-Kazhdan}, in particular Theorem~\ref{theorem-KazhdanGroups} and Remark \ref{remark-ThresholdKazhdan}, for precise statements). The proof of the converse direction (i.e., sufficiency) is new with the argument relying heavily on the method developed here.

As in the case of the Haagerup property, the next two remarks put Theorem \ref{maintheorem-Kazhdan} into the respective context.

\begin{remark}[{\bf Percolation Interpretation of Kazhdan's Property (T)}] \label{remark-ProbIntKazhdan}
Theorem \ref{maintheorem-Kazhdan} provides the following probabilistic interpretation of property (T): picture the Cayley graph again as an infinite, connected network and consider the task of designing an invariant, random strategy to disconnect the graph with the constraint that, on average, at most $1-\alpha$ percent of the edges may be removed at each vertex. Theorem \ref{maintheorem-Kazhdan} asserts that every strategy which is {\em too cheap} (i.e.~costs less than $1-\alpha^*$) is bound to fail: for such a strategy, pairs of vertices will remain connected with uniformly positive probability, irrespective of their distance. To further interpret this condition, we recall a result of Lyons and Schramm \cite[Theorem 4.1]{LS99} that an invariant, ergodic and insertion-tolerant bond percolation satisfying \eqref{equ-LongRangeOrder} always has a {\em unique} infinite cluster. Thus Theorem \ref{maintheorem-Kazhdan} shows that, on a Kazhdan group, {\em every} such percolation with sufficiently large marginals has a unique infinite cluster, suggesting that Cayley graphs of Kazhdan groups are Cayley graphs with a strong tendency towards forming a unique infinite cluster. This intuition is complemented by a result obtained by Hutchcroft and Pete~\cite{HP20} in a recent breakthrough regarding the ergodic-theoretic cost of Kazhdan groups, which states that Cayley graphs of Kazhdan groups admit invariant percolations with arbitrarily small marginals and with a unique infinite cluster. To summarize, Theorem \ref{maintheorem-Kazhdan} asserts that Cayley graphs of Kazhdan groups are distinguished by the property of being very robust against all attempts of disconnecting them. Interestingly, this robustness property is reminiscent of similar features of expander graphs in the setting of finite graphs. \nopagebreak {\hfill\rule{2mm}{2mm}}
\end{remark}

\begin{remark}[{\bf Geometric Interpretation of Kazhdan's Property (T) and Non-QI-Invariance}] \label{remark-GeomIntKazhdan}
Theorem \ref{maintheorem-Kazhdan} also entails concrete insight about the geometry of Kazhdan groups: their Cayley graphs are {\em highly connected} at all distances and they are thus able to withstand any cheap attempt of disconnecting them (in the probabilistic sense of Remark \ref{remark-ProbIntKazhdan}). We also note that, like the Haagerup property, property (T) is not QI-invariant, see e.g.~\cite[Theorem 3.6.5]{BdlHV08}. This fact may lead to the perception that property~(T) is not reflected in the geometric structure of the Cayley graph in a meaningful or intuitive way -- from this perspective, Theorem \ref{maintheorem-Kazhdan} and the underlying geometric intuition described above may be surprising (to elaborate on that point, property (T) is by definition a property of the complete unitary representation theory of the group, whose encoding in the Cayley graph is unclear and seems to be quite challenging to decipher, if at all possible). Our approach of adding a natural source of randomness thus seems to be very useful to identify rigorously features of the geometry of Kazhdan groups. \nopagebreak {\hfill\rule{2mm}{2mm}} 
\end{remark}

\noindent{\bf Further Results and Applications Related to Property (T).} As mentioned before, in this context we furthermore show the following: 
\begin{enumerate}
\item We consider groups with relative property~(T) (see Section \ref{subsection-RelativeKazhdan} for the definition) and extend Theorem~\ref{maintheorem-Kazhdan} to this setting (see Theorem~\ref{theorem-RelativeKazhdanGroups}) 
\item We provide applications of the corresponding quantitative threshold to a well-known problem in Bernoulli percolation: namely, we give new proofs of the facts that the uniqueness threshold $p_u$ does not belong to the uniqueness phase in Bernoulli bond percolation on any Cayley graph of an infinite group which has property~(T) and also for any group which has property (T) relative to an infinite, normal subgroup (see Theorem \ref{theorem-UniquenessRelativeKazhdan}). For groups with property (T), this result was shown in \cite{LS99} (using a different argument) and for groups with relative property (T) a more general result has appeared in \cite{GT16}, but the argument we give here is very different: our argument is probabilistic and based on the aforementioned threshold, an approach to ruling out a unique infinite cluster at specific values of the survival parameter due to Benjamini, Lyons, Peres and Schramm \cite{BLPS99b} and ideas described by Hutchcroft and Pete in \cite{HP20}.

\item We also provide an estimate of a similar threshold appearing for groups which do not have the Haagerup property, also known as the {\it weak} Kazhdan property (see Proposition \ref{prop-WeakKazhdanThreshold}). Under an additional symmetry assumption, our probabilistic argument for non-uniqueness at the uniqueness threshold also applies to non-Haagerup groups (see Corollary \ref{cor-PuWeakKazhdan}). 
It may be instructive to point out that non-Haagerup groups in general do not have non-uniqueness at~$p_u$. For instance~${\rm SL_3(\mathbb Z)}*\mathbb Z_2$ has $p_u=1$ for every Cayley graph because it has infinitely many ends but has relative property~(T) w.r.t.~the subgroup ${\rm SL}_3(\mathbb Z)$ (which is not normal).
\end{enumerate}

Let us conclude by pointing out that although we state our results for bond percolations, the analogues for {\em site percolations} 
-- i.e.~random subgraphs induced by an invariant random subset of vertices --
are straightforward (for the Haagerup property, see Theorem \ref{cor-SitePercolation}). 

\subsection{Decay Rate of Two-Point Functions for Concrete Examples of Haagerup Groups.} \label{subsection-intro5}

The construction we develop is widely applicable and yields natural models as well as exponential decay in settings of particular interest. This is emphasized in the final part of the article by an in-depth discussion of three classes of examples which have historically been of great relevance for both probability theory and geometric group theory.

\subsubsection{\bf Lamplighters over free groups.} \label{subsection-lamplighters} The first class of examples we treat are so-called {\em lamplighter groups} over free groups. These are restricted wreath products of the form $H\wr\F_r$, where the {\em base group} $\F_r$ is a free group of rank $r\in\N$ and the {\em lamp group} $H$ is finite (see Section \ref{lamplighters} for precise definitions). It is a non-trivial fact \cite{CSV08} that these groups have the Haagerup property. More generally, stability of the Haagerup property under wreath products was established in \cite{CSV12}. Lamplighter groups, especially ones of the form $\Z_2\wr\Z^d$, have received substantial attention in the theory of random walks starting with \cite{KV83}. Let us mention the breakthrough results about identification of the Poisson boundary \cite{E10,E11} for $d\geq5$ and \cite{LP21} for $d=3,4$. See also \cite{KW07} for an investigation of the Poisson boundary of random walks on lamplighters over free groups. For Bernoulli percolation, Lyons and Schramm \cite{LS99} established that any Cayley graph of a lamplighter group over a finitely generated base group with non-trivial, finite lamp group has non-trivial uniqueness phase, i.e.\ $p_u<1$. In particular, this implies that the two-point function of Bernoulli percolation is bounded away from zero for sufficiently large marginals. On the other hand, in the setting of invariant percolations, we prove the following result (see Theorem \ref{theorem-Lamplighter}).

\begin{theorem}[{\bf Large Marginals and Exponential Decay}] \label{theorem4} 
Let $H$ be a finite group, let $r\in\N$ and let $\skrig$ be any Cayley graph of $\Gamma=H\wr\F_r$. Then there exists a constant $C>0$ such that for every $\alpha<1$, there exists a $\Gamma$-invariant bond percolation $\P$ on $\skrig$ with $\E[\deg_\omega(o)] > \alpha \deg(o)$ and with two-point functions having exponential decay, i.e.\
\begin{equation}\label{tau-decay}
\e^{-\beta |g|} \leq \tau(o,g) \leq \e^{-\gamma |g|} \quad \mbox{for all} \, \, g \in \Gamma,
\end{equation}
where $\beta,\gamma>0$ such that $\beta / \gamma \leq C$.
\end{theorem}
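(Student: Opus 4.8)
The goal is to produce, for the lamplighter group $\Gamma = H\wr\F_r$, an invariant percolation with marginals close to $1$ and two-point functions decaying exponentially in the word length $|g|$. By Theorem \ref{theorem2}, it suffices to verify condition (i) for the \emph{word length} itself, i.e. to exhibit an invariant action of $\Gamma$ on a space with measured walls whose wall distance $D$ satisfies $D(o,g)\geq \rho(|g|)$ with $\rho(t)=c\,t$ for some constant $c>0$. Indeed, once such an action is in hand, the pair $(C,\rho)$ produced by Theorem \ref{theorem2} gives exactly the two-sided bound \eqref{tau-decay} with $\rho$ linear, which is the assertion of Theorem \ref{theorem4}. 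So the entire content reduces to building a good wall structure on $H\wr\F_r$ and estimating its wall distance from below by a constant multiple of the word length.

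**Constructing the measured walls.** The plan is to combine walls coming from the two factors. The free group $\F_r$ is a tree, so its edges give a natural (counting-measure) wall structure: each edge of the Cayley tree separates the vertices into two halves, and the number of walls separating $o$ from $w\in\F_r$ is exactly $|w|_{\F_r}$, the tree distance. This is the same structure used in Proposition \ref{prop-GroupsActingOnTrees} and Example \ref{example-WordLengthFree}. For the lamp configurations, which are finitely supported functions $\F_r\to H$, one introduces for each base point $x\in\F_r$ and each nontrivial ``half'' of the lamp state at $x$ a wall that records whether the lamp at $x$ is in a prescribed state; since $H$ is finite these contribute a bounded number of walls per active lamp. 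Concretely I would take the measure on the space of walls to be a sum over $\F_r$-edges (tree walls) plus a sum over base points of counting measures recording lamp states, and check invariance under the $\Gamma$-action by left multiplication, which acts by tree automorphisms on the base and by the corresponding shift-and-increment on the lamp coordinates. The key structural point is that the de Cornulier--Stalder--Valette proof \cite{CSV08} that $H\wr\F_r$ has the Haagerup property is itself built from a measured-wall / conditionally negative definite function on exactly these coordinates, so the wall structure is available; what Theorem \ref{theorem2} demands beyond mere properness is the \emph{linear} lower bound on $D$.

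**The lower bound on the wall distance.** The crux is the estimate $D(o,g)\geq c\,|g|_\Gamma$. Here I would use the standard description of the word metric on $H\wr\F_r$: for $g=(f,w)$ with lamp configuration $f$ supported on a finite set $S\subset\F_r$ and base element $w$, the word length $|g|_\Gamma$ is comparable to $|w|_{\F_r}$ plus the length of the shortest walk in the tree that starts at $o$, visits every point of $\supp(f)$, and ends at $w$ (a travelling-salesman quantity on the tree), plus $\sum_{x}\, \ell_H(f(x))$ for the cost of setting the lamps. The wall distance $D(o,g)$ splits as the tree-wall contribution, which equals $|w|_{\F_r}$, plus the lamp-wall contribution, which is $\sum_x$ (number of walls separating the lamp states), comparable to $\sum_x \ell_H(f(x))$ since $H$ is finite. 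The main obstacle is that these two natural quantities do \emph{not} by themselves dominate the travelling-salesman term in $|g|_\Gamma$: a configuration can have few active lamps spread far apart, so that $|w|_{\F_r}+\sum_x\ell_H(f(x))$ is much smaller than $|g|_\Gamma$. To repair this one must add a third family of walls that ``charges'' for the geometric spread of $\supp(f)$ on the tree --- for instance, for each tree-edge $e$, a wall that is crossed whenever there is an active lamp on the far side of $e$ from $o$. Because $\F_r$ is a tree, every such far-side lamp forces a tree traversal across $e$ in any spanning walk, so these walls reconstruct precisely the travelling-salesman cost up to a factor of $2$, and summing all three contributions yields $D(o,g)\geq c\,|g|_\Gamma$. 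Verifying that this enriched wall measure still has finite mass on each $\skriw(x,y)$ and is still $\Gamma$-invariant, and pinning down the comparison constants in the word-length formula, is the technical work; once done, Theorem \ref{theorem2}(i)$\Rightarrow$(ii) delivers Theorem \ref{theorem4} immediately.
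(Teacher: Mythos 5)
Your high-level strategy is the same as the paper's: reduce Theorem \ref{theorem4} to Theorem \ref{theorem2}(i)$\Rightarrow$(ii) by exhibiting an invariant action on a space with measured walls whose wall distance dominates a linear function of the word length, and you correctly identify that the tree walls plus per-site lamp walls only account for $|w|_{\F_r}+|\supp(f)|$ and miss the travelling-salesman term. The gap is in the third family of walls you introduce to repair this. A wall of the form $\{(f,w)\colon \supp(f)\cap A_e\neq\emptyset\}$ fails on both counts required of a space with measured walls. First, finiteness: if $g=(f,w)$ has even a single active lamp at $z$, then \emph{every} edge $e$ of the tree (with the side containing $z$) yields such a separating wall, so $\mu(\skriw(o,g))=\infty$ and the measured-wall axiom is violated; there is no invariant way to restrict to the finitely many ``relevant'' edges, since any such restriction (e.g.\ to the convex hull of $\supp(f)\cup\{o\}$) references the basepoint. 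Second, invariance: left multiplication by $(\varphi_1,x_1)$ sends $f$ to $\varphi_1\cdot(x_1 f)$, so it does not merely translate $\supp(f)$; the image of your wall is $\{(f',w')\colon f'|_{x_1A_e}\neq \varphi_1|_{x_1A_e}\}$, and closing the family under the action forces you to include all walls $\{f'|_{A}=\psi\}$ for all finitely supported $\psi$, which runs into the same infinite-measure problem.

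The paper's construction (following de Cornulier--Stalder--Valette, which you mention but do not actually use) resolves exactly this by \emph{coupling} the marker position with the lamp data: the walls are $E(e,\psi)=\{(x,\varphi)\colon x\in A_e,\ \varphi|_{A_e^c}=\psi\}$, indexed by an oriented edge $e$ and a finitely supported $\psi$ on the far half-space. For a fixed pair of group elements, each oriented edge contributes at most two separating walls, and only edges near the relevant data contribute at all, so $w(o,g)<\infty$; invariance holds because the twisted lamp configuration is absorbed into the index $\psi$. The linear lower bound is then not automatic: Lemma \ref{lemma-WallDistanceLamplighterF2} proves $w(o,(\varphi,x))\geq \tfrac14|(\varphi,x)|-\tfrac14$ by a combinatorial pairing between the edges of an optimal travelling-salesman walk and separating walls of the form $E(e_k,\varphi|_{A_k^c})$ or $E(e_i,\bzero|_{A_i^c})$, distinguishing edges traversed once from those traversed twice. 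This matching argument is the substantive content of the proof and is absent from your proposal; without it (or the correct wall family it relies on), the reduction to Theorem \ref{theorem2} does not go through.
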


\begin{remark}[{\bf On the Geometry of Lamplighters}] \label{rem-GeomLamplighters}
    A distinguishing feature of Theorem \ref{theorem4} is given by the fact that the geometry of the wreath product $H\wr\F_r$ is considerably different from the geometry of the base free group: any Cayley graph of $H\wr\F_r$ has one end and even $p_u<1$ \cite{LS99} (in contrast to $\F_r$). To elaborate on that point, without using our construction (see Theorem \ref{theorem-GeneralConstruction}), even finding an invariant percolation model with large marginals and two-point function vanishing at infinity (not necessarily exponentially) seems to be challenging for these graphs.
\end{remark}

\subsubsection{\bf Co-Compact Fuchsian Groups} \label{subsection-CocompactFuchsian} The second class of examples we treat are {\em planar} Cayley graphs. We are particularly interested in one-ended, nonamenable examples, i.e.\ Cayley graphs which embed in a nice way into the hyperbolic plane $\H^2$, see~\cite{BS01}. Notably, these include the canonical Cayley graphs of co-compact Fuchsian groups, an example considered in percolation theory first in~\cite{L98}. For important results about percolation in the planar hyperbolic setting, Bernoulli as well as general invariant, we refer to \cite{BS01,L98,L01,LP16}. In this context, let us mention that several fundamental conjectures about Bernoulli percolation were recently established for Cayley graphs of Gromov hyperbolic groups in~\cite{H19}.

In the planar hyperbolic setting and more generally for groups with a proper, co-compact action on $n$-dimensional real hyperbolic space, $n\ge1$, our construction yields a natural percolation model defined by removing edges along boundaries of half-spaces of the natural Poisson process (see Section~\ref{hyperbolic} for details). These also exhibit exponential decay.

\begin{theorem}[\bf Large Marginals and Exponential Decay] \label{theorem5} 
Let $\skrig$ be a transitive, nonamenable, planar graph with one end, resp.~let $\skrig$ be the Cayley graph of a discrete co-compact subgroup $\Gamma$ of ${\rm Isom}(\mathbb H^n)$. Then for every $\alpha<1$, there exists an $\Aut(\skrig)$-invariant, resp.~$\Gamma$-invariant, bond percolation $\P$ on $\skrig$ with $\E[\deg_\omega(o)] > \alpha \deg(o)$ and such that $\tau(o,\cdot)$ has exponential decay (i.e., \eqref{tau-decay} holds).
\end{theorem}

\subsubsection{\bf The Special Case of Amenable Groups.}\label{subsection-intro4}

Recall that amenable groups have the Haagerup property~\cite{BCV95}. 
Our construction is unifying in the sense that it covers the constructive part of the proof of Theorem \ref{theorem-AmenableGroups} from \cite{BLPS99} in the case of an amenable group. 
More precisely, we obtain the following result (see Proposition \ref{prop-SpecialCaseAmenability}).

\begin{prop}[{\bf Recovering the Amenable Case}]\label{prop1}
Let $\skrig$ be the Cayley graph of a finitely generated amenable group $\Gamma$. There exists an invariant action of $\Gamma$ on a space with measured walls such that the construction used in the proof of Theorem \ref{maintheorem-Haagerup} yields, for every $\alpha<1$, a $\Gamma$-invariant bond percolation $\P$ on $\skrig$ with $\E[\deg_\omega(o)]>\alpha\deg(o)$ and with no infinite clusters.
\end{prop}

We note that information about the decay rate of the two-point function does not suffice to characterize amenability (indeed, supercritical Bernoulli bond percolation on a regular tree of degree at least three shows that even exponential decay of the two-point function does not imply finiteness of percolation clusters, see Example~\ref{ex-intro1}). However, also in the amenable setting a close connection with positive definite functions, which are not two-point functions, exists. Essentially the idea is that for any percolation with only finite clusters, there is a canonical way to associate a random positive definite function to the cluster of the identity. This approach and a characterization of amenable groups via such positive definite functions were first described in \cite[Theorem 1.6]{MR22}, which served as the motivation of the current work. These positive definite functions provide a large class of probabilistic examples of functions arising naturally in the study of Schur multipliers of $C^*$-algebras associated to~$\Gamma$, which implement certain operator algebraic approximation properties. Their specific form was then used to provide asymptotic decompositions and establish limiting behavior for sequences of such multipliers by means of the group-invariant compactification constructed in \cite{MV16}; we refer the interested reader to \cite{MR22} for details.

\subsection{Novel Ideas of the Proofs.}\label{sec novel ideas}

Let us underline the key ideas involved in the proofs and the main technical contributions of the present work. To start off, we point out that while the Haagerup property has played a major role in operator algebras and geometric group theory, it does not seem to have appeared in concrete probabilistic contexts. In particular, to the best of the authors' knowledge, there are no applications of the Haagerup property to percolation,~cf.~\cite{LP16}. Two observations may explain this:
\begin{itemize}
    \item[(i)] the Haagerup property is a general property, enjoyed by many groups, whose ``common geometric features'' are not at all apparent (cf.~Section~\ref{section-BackgroundHaagerupKazhdan} below).
    \item[(ii)] the Haagerup property has several equivalent reformulations (see e.g.~\cite{CCJJV01}), most of which are  quite abstract and may thus seem unrelated to intuitive probabilistic behavior.
\end{itemize}
To elaborate on the second point, while the proof of any characterization of the Haagerup property may use the reformulation most useful for each direction of the equivalence, the first difficulty clearly lies in formulating a suitable statement. Now the first main contribution of the present work is not only to provide such a statement, but one which fits into a natural probabilistic framework and leads to several ramifications.

More specifically, the properties, which percolations on groups with the Haagerup property should or should not have, are far from clear. However, one possibility to arrive at the statement of Theorem~\ref{maintheorem-Haagerup} is to observe that the ``if'' direction is a consequence of a classical observation by Aizenman and Newman \cite{AN84} that the two-point function of an invariant percolation is positive definite, combined with a particular formulation of the Haagerup property in terms of positive definite functions. Note that there is a priori no reason for the converse to be true and that there would be two general approaches towards proving it, either through abstract reasoning, or by an explicit construction. In this context, recall that a constructive approach was used for amenability \cite{BLPS99} utilizing the existence of F{\o}lner sets and the mass-transport principle, but in the present setting, F{\o}lner sets are not available and the mass-transport principle would be useful for quantities of the from $\sum_{g\in \Gamma}\tau(o,g)$ instead of the desired pointwise behavior of $\tau(o,g)$, $g\in \Gamma$. Nonetheless, a constructive approach is more desirable from a probabilistic point of view. In fact, we are interested not in ``some" construction, but one in which the probabilistic and geometric mechanisms converge, and the ways in which different formulations of the Haagerup property are linked, become evident. While these requirements are somewhat at odds with the need for a ``flexible" construction (cf.~(i) above), there is an important reason for asking for these additional difficulties to be addressed, namely that such an approach is likely to find further applications.

Let us now describe the explicit construction put forward in this article. While a construction with built-in independence is desirable for its tractability, Bernoulli percolation, except for very specific examples, will not have the properties in Theorem \ref{maintheorem-Haagerup}. In general, we therefore have to introduce (strong) dependencies between edges. It is nonetheless instructive to look at the basic example where Bernoulli percolation suffices, which is that of a free group. Essentially, the reason independent percolation suffices here is that every edge plays the role of a ``wall", whose removal breaks the graph into two disconnected components. This simple observation leads to a proof of Theorem~\ref{maintheorem-Haagerup} in the special case of groups acting properly on trees (cf.\ Proposition~\ref{prop-GroupsActingOnTrees}), an important subclass of groups with the Haagerup property. The next important observation is that it is not the tree-structure, but the ''wall-structure", which makes this argument work. This is demonstrated by our proof of Theorem~\ref{maintheorem-Haagerup} for groups acting properly on {\em spaces with walls}~\cite{HP98} (cf.\ Proposition~\ref{prop-GroupsActingOnSpacesWithWalls}). Here a countable set of ''walls'' is deleted independently, and the resulting random set of walls is used to define percolations with more intricate dependence structure. In both of these constructions, the probabilistic and geometric mechanisms at play are clearly visible. To prove Theorem~\ref{maintheorem-Haagerup}, we then develop a machinery which yields similar behavior for all groups with the Haagerup property: {\color{blue}we use} infinite measure spaces called {\em spaces with measured walls} \cite{CMV04}, which are quadruplets $(X,\skriw,\skrib,\mu)$ where $X$ is a set, $\skriw$ is a set of partitions of $X$ into two classes (a partition is called a {\em wall}), $\skrib$ is a $\sigma$-algebra on $\skriw$ and $\mu$ is a measure on $\skrib$ such that for every two distinct points $x$ and $y$ in $X$, the set $\skriw(x,y)$ of walls separating $x$ and $y$ is $\skrib$-measurable and $\mu(\skriw(x,y))<\infty$. 

We will be interested mainly in {\em invariant actions} on spaces with measured walls, which are actions on the set $X$ which extend to a measure-preserving action on $(\skriw,\skrib,\mu)$ (see Section \ref{subsection-MeasuredWalls} for details). Now, in the continuum setup, the next key new idea is to employ invariant (Poisson) point processes on spaces with measured walls, i.e.~the continuous analogue of deleting walls independently. The resulting random countable set of walls is then used to build invariant bond percolations with suitable dependence structures (see Theorem \ref{theorem-GroupsWithTheHaagerupProperty}) satisfying the required properties of large marginals and vanishing connectivity. 

This construction is the second main contribution of the present work. In fact, this approach provides a generalized machinery to build invariant percolations, with explicit control of marginals and two-point function decay, from invariant actions on spaces with measured walls and growth of wall distance -- recall \eqref{quant} and see Theorem \ref{theorem-GeneralConstruction} and Corollary \ref{cor-QuantitativeDecay} for details. In this general form, the construction addresses all the aforementioned difficulties and, indeed, can be successfully applied in several further situations, as briefly mentioned before and as we will describe concretely below.

First, regarding the characterization of property (T) given in Theorem \ref{maintheorem-Kazhdan}, the situation is reversed compared to that of the Haagerup property. Here, the implication of Kazhdan's property~(T) follows from the aforementioned observation that two-point functions are positive definite (this is the direction which was previously known \cite{LS99} -- here a different, quantitative proof is given based on~\cite{IKT09}). Note that property (T), which is essentially designed to be a very strong version of non-amenability, has strong {\it consequences} for probabilistic models, including percolation. In fact, it is in part due to this reason that property~(T) has played a notable role in percolation, cf.~\cite{LP16}. Now the fact that the resulting condition for percolation actually {\it suffices to characterize property (T)} is the new contribution and the tool for deducing this new direction is also our construction of building percolations using spaces with measured walls and the associated quantitative information \eqref{quant} (see Section \ref{subsection-Kazhdan} for details).

Let us note that the current method is quite robust: 
it is more general as shown by the aforementioned characterization of relative property (T) (Theorem \ref{theorem-RelativeKazhdanGroups}), the subsequent application of the method to Bernoulli percolation at the uniqueness threshold for such groups  (Theorem~\ref{theorem-UniquenessRelativeKazhdan}) and the fact that we also recover the amenable case (see Proposition \ref{prop-SpecialCaseAmenability}). The key ingredient in the proofs of Theorem \ref{theorem4} and Theorem \ref{theorem5} is the quantitative approach. For these applications, we use the existence of measure definite functions with linear growth.
Existence of such functions is known in the second case, and has been proved in the first case in \cite{CSV12} (we present a simplified, self-contained argument here).

\subsection{Background on the Haagerup and Kazhdan Properties.} \label{section-BackgroundHaagerupKazhdan}

A locally compact, second countable group $G$ has the {\em Haagerup property}, or is {\em a-T-menable} in the terminology of Gromov \cite{G93}, if it admits a continuous, affine, isometric action on a Hilbert space~$H$, which is metrically proper (i.e.~for every bounded $B\subset H$, the set $\{g\in G\colon gB\cap B\neq \emptyset\}\subset G$ has compact closure), see e.g.~\cite{CCJJV01}. Similarly, the group $G$ has {\em Kazhdan's property (T)} if every continuous, affine, isometric action on a Hilbert space has a fixed point (for finitely generated groups, this definition is equivalent to the one given in \eqref{def-Kazhdan}, see e.g.~\cite{BdlHV08,CCJJV01}). There are alternative characterizations of both properties in terms of positive and negative definite functions, which are classical and which we review in Section \ref{sec-prel-Haagerup}, resp.~Section \ref{subsection-Kazhdan}.

The origins of the Haagerup property can be traced back to seminal work of Haagerup \cite{H79} on approximation properties of group $C^*$-algebras of free groups. Groups with the Haagerup property form a large class with classical examples provided by compact groups, amenable groups, free groups, Coxeter groups, groups acting properly on trees and isometry groups of real hyperbolic spaces, see e.g.~\cite{CCJJV01}. We refer the reader to \cite{CCJJV01} for more examples, background as well as a survey of various results about the Haagerup property. On the other hand, groups with property (T) form a quite special class of groups, the main examples being ${\rm SL}_n(\R)$ (and ${\rm SL}_n(\Z)$) for $n\geq3$. For more examples, background and a survey of various striking results about property (T), see \cite{BdlHV08}. Two measure-theoretic characterizations of the Haagerup and Kazhdan properties were previously known. On the one hand, Jolissaint \cite[Theorem 2.1.3]{CCJJV01} showed that a locally compact, second countable group has the Haagerup property if and only if it has a measure preserving action on a standard probability space which is strongly mixing and contains a F{\o}lner sequence. 
This result fits into the same circle of ideas as the characterization of property (T) due to Schmidt \cite{S80} and Connes and Weiss~\cite{CW80}, which states that $G$ has property (T) if and only if every measure preserving ergodic action is strongly ergodic; see \cite[Section~2.1]{CCJJV01} for more details.
On the other hand, answering a question posed in \cite{CCJJV01}, Glasner \cite[Theorem 13.21]{G03} showed that a countable group has the Haagerup property if and only if the closure of the set of $\Gamma$-invariant, mixing probability measures on $\{0,1\}^\Gamma$ is convex. This characterization is an analogue of the well-known probabilistic characterization of property~(T) due to Glasner and Weiss~\cite{GW97}, which states that a countable group $\Gamma$ has property (T) if and only if in the set of $\Gamma$-invariant probability measures on $\{0,1\}^\Gamma$, the set of ergodic measures is closed, equivalently, is not dense. In comparison with these existing results, perhaps distinguishing features of Theorem~\ref{maintheorem-Haagerup} and Theorem~\ref{maintheorem-Kazhdan} are provided by the probabilistic meaning entailed and the concrete insight provided about the geometry of the group, see Remarks \ref{remark-ProbIntHaagerup}-\ref{remark-GeomIntKazhdan}.  On a similar note, recall that the Haagerup property is a generalization of amenability \cite{BCV95,CCJJV01}, while property (T) is a strong negation -- both of these facts are illustrated in a new and geometric way by Theorem \ref{maintheorem-Haagerup}, resp.~Theorem \ref{maintheorem-Kazhdan}, in combination with Theorem \ref{theorem-AmenableGroups}.

\subsection{Organization of the Rest of the Article.} \label{subsection-intro6}

The rest of the article is organized as follows. Section \ref{preliminaries} provides necessary background about the Haagerup property, invariant percolations and point processes. Section~\ref{haagerup} deals with the results about the Haagerup property announced in Section~\ref{subsection-intro1}. In particular, it contains the proof of Theorem \ref{maintheorem-Haagerup} as well as the proof of our general construction and the associated quantitative estimates. The content of Section~\ref{kazhdan} are precise statements and proofs of the results about Kazhdan's property (T), which were alluded to in Section~\ref{subsection-intro1}. More specifically, this section is split into three subsections: the proof of Theorem~\ref{maintheorem-Kazhdan} and estimates of the appearing threshold are the content of Section~\ref{subsection-Kazhdan}. The extension of Theorem~\ref{maintheorem-Kazhdan} to groups with relative property (T) and the aforementioned application to Bernoulli percolation are shown in Section~\ref{subsection-RelativeKazhdan}. Finally, in Section~\ref{subsection-WeakKazhdan}, we present the quantitative converse to Theorem~\ref{maintheorem-Haagerup}. In Section~\ref{amenable}, we discuss spaces with measured walls naturally associated to amenable groups and provide a proof of Proposition~\ref{prop1}. Section~\ref{lamplighters} deals with the example of lamplighter groups and contains the proof of Theorem~\ref{theorem4}. The description of our construction for certain hyperbolic groups and the proof of Theorem~\ref{theorem5} are carried out in Section~\ref{hyperbolic}.  Section \ref{section-outlook} contains a brief outlook.

\section{\bf Preliminaries}\label{preliminaries}
In this section provide some relevant facts pertinent to both geometric group theory and probability.

\subsection{The Haagerup Property, Positive Definite and Conditionally Negative Definite Functions.}\label{sec-prel-Haagerup}

We start with some background on the Haagerup property, which, by definition, as mentioned in the introduction, is satisfied by 
a locally compact, second countable group $G$ if it admits a continuous, affine, isometric action on a Hilbert space, which is metrically proper, see \cite{CCJJV01}.
In the current section, we collect the most relevant characterizations of this property for finitely generated 
groups $\Gamma$. We first recall the following standard definitions. 

\begin{itemize}
\item {\it Positive definiteness.} A function $k \colon \Gamma \times \Gamma \to \C$ is called a {\em positive definite kernel} if the matrix $[k(g,h)]_{g,h\in F}$ is positive for any finite subset $F\subset\Gamma$, i.e.\
$$
\sum_{i,j=1}^n \overline{a_i}a_j k(g_i,g_j) \geq 0
$$
for every $g_1,\ldots,g_n\in\Gamma$ and $a_1,\ldots,a_n\in\C$. It is called {\em invariant} if it is invariant under the diagonal action of $\Gamma$. A function $\varphi \colon \Gamma \to \C$ is called {\em positive definite} if $k_\varphi(g,h)\coloneqq\varphi(g^{-1}h)$ is a positive definite kernel. It is called {\em normalized} if $\varphi(e)=1$.

\medskip

\item {\it Conditional negative definiteness.} A function $k \colon \Gamma \times \Gamma \to [0,\infty)$ is called a {\em conditionally negative definite kernel} if $k(g,g)=0$ and $k(g,h)=k(h,g)$ for every $g,h\in\Gamma$ and 
$$
\sum_{i,j=1}^n a_i a_j k(g_i,g_j) \leq 0
$$
for every $g_1,\ldots,g_n\in\Gamma$ and $a_1,\ldots,a_n\in\R$ with $\sum_{i=1}^n a_i = 0$. It is called {\em invariant} if it is invariant under the diagonal action of $\Gamma$. A function $\psi \colon \Gamma \to [0,\infty)$ is called {\em conditionally negative definite} if $k_\psi(g,h)\coloneqq\psi(g^{-1}h)$ is a conditionally negative definite kernel.

\medskip

\item {\it Measure definiteness.} The notion of measure definiteness will be needed in Section \ref{sec-generalized}. 
Following \cite[Definition 0.1.1]{RS98}, we say that a function $k\colon\Gamma\times\Gamma\to[0,\infty)$ is a {\em measure definite kernel} if there exists a measure space $(\Omega,\skrib,\mu)$ and a map $S\colon\Gamma\to\skrib$, $g\mapsto S_g$ such that 
$$
k(g,h)=\mu(S_g \Delta S_h)
$$ 
for every $g,h\in\Gamma$. Similarly, a function $\psi\colon\Gamma\to[0,\infty)$ is called {\em measure definite} if the kernel $k_\psi(g,h)=\psi(g^{-1}h)$ is measure definite.
\end{itemize}
According to a well-known result of Schoenberg, a function $\psi: \Gamma \to \C$ is conditionally negative definite if and only if for every $t\geq 0$, the map $g\mapsto \exp(- t \psi(g))$ is positive definite. 
The origins of the Haagerup property can be traced back to the seminal work of Haagerup \cite{H79} on group $C^*$-algebras of free groups, where, as a key ingredient, it was shown that the word-length function 
$|\cdot|$ on the finitely generated free group $\F_r$ (w.r.t.\ the standard set of generators)  is conditionally negative definite. Therefore, by Schoenberg's correspondence, Haagerup's result can be restated by saying that, for each $t\geq 0$, the map $\F_r \ni g \mapsto \exp(- t|g|)$ is positive definite. Note that if 
$$
\varphi_j(g):=\exp\Big(- \frac 1j |g|\Big) \qquad j \geq 1, g \in \Gamma, 
$$
 then each $\varphi_j$ vanishes at infinity and the sequence converges pointwise to $1$. 
In this vein, below we summarize the characterizations of the Haagerup property in terms of positive and conditionally negative definite functions that are relevant for our purposes: 
\begin{theorem}[Characterizations of the Haagerup property, cf.~{\cite[Theorem 2.1.1]{CCJJV01}}] \label{theorem-HaagerupProperty}
Let $\Gamma$ be a finitely generated group. Then the following are equivalent:
\begin{enumerate}
\item[{\rm (i)}] $\Gamma$ has the Haagerup property;
\item[{\rm (ii)}] there exists a conditionally negative definite function $\psi$ on $\Gamma$ that is proper (the latter condition means that $\lim_{|g|\to\infty}\psi(g)=\infty$). 
\item[{\rm (iii)}] there exists a sequence of normalized, positive definite functions on $\Gamma$ which vanish at infinity and converge pointwise to $1$.
\end{enumerate}
\end{theorem}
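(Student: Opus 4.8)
The plan is to prove the cyclic chain of implications (i) $\Rightarrow$ (ii) $\Rightarrow$ (iii) $\Rightarrow$ (i), so that the three conditions become equivalent. The first implication rests on the standard dictionary between affine isometric actions and conditionally negative definite functions. Given a metrically proper affine isometric action $g \mapsto \pi(g)\cdot + b(g)$ on a Hilbert space $H$, where $\pi$ is an orthogonal representation and $b\colon\Gamma\to H$ is the associated $1$-cocycle (satisfying $b(gh)=\pi(g)b(h)+b(g)$), I would set $\psi(g):=\|b(g)\|^2$. A short computation using the cocycle identity gives $\psi(g^{-1}h)=\|b(g)-b(h)\|^2$, and then for any $a_1,\dots,a_n\in\R$ with $\sum_i a_i=0$ one finds $\sum_{i,j}a_i a_j\|b(g_i)-b(g_j)\|^2 = -2\,\big\|\sum_i a_i b(g_i)\big\|^2\le 0$, so $\psi$ is conditionally negative definite; moreover $\psi$ is proper precisely because the action is metrically proper (for a finitely generated, hence discrete, group this amounts to $\|b(g)\|\to\infty$ as $|g|\to\infty$). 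This yields (i) $\Rightarrow$ (ii).

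For (ii) $\Rightarrow$ (iii) I would invoke Schoenberg's correspondence (recalled above the statement): since $\psi$ is conditionally negative definite, each $\varphi_t(g):=\exp(-t\,\psi(g))$, $t>0$, is positive definite, and it is normalized because $\psi(e)=0$. Properness of $\psi$ forces $\varphi_t\in C_0(\Gamma)$ for every fixed $t>0$, while letting $t\to 0^+$ gives $\varphi_t\to 1$ pointwise; taking $\varphi_j:=\varphi_{1/j}$ produces the desired sequence.

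The heart of the argument, and the step I expect to be the main obstacle, is (iii) $\Rightarrow$ (i), where one must manufacture properness from two a priori unrelated limiting behaviours. Starting from the sequence $(\varphi_n)$, I would form $\psi_n:=1-\mathrm{Re}\,\varphi_n$, which is conditionally negative definite (on mean-zero vectors the constant kernel contributes $(\sum_i a_i)^2=0$, and $-\mathrm{Re}\,\varphi_n$ is $\le 0$ there by positive definiteness), is bounded by $2$, and satisfies $\psi_n(e)=0$. Two facts are available: for fixed $g$, $\psi_n(g)\to 0$ as $n\to\infty$ (since $\varphi_n\to 1$), and for fixed $n$, $\psi_n(g)\to 1$ as $|g|\to\infty$ (since $\varphi_n\in C_0(\Gamma)$). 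The idea is to pass to a subsequence and sum. Exhausting $\Gamma$ by balls $B_k$ in the word metric, I would choose $n_1<n_2<\cdots$ so that $\psi_{n_k}(g)\le 2^{-k}$ for all $g\in B_k$, and set $\psi:=\sum_k \psi_{n_k}$. Convergence of the series at each fixed $g$ holds because only finitely many terms have $g\notin B_k$, and $\psi$ is conditionally negative definite as a pointwise limit of finite sums of such functions. Properness is where the second limiting behaviour enters: given $M$, pick $K\ge 2M$; since each of the finitely many $\psi_{n_k}$ with $k\le K$ tends to $1$ at infinity, there is a finite set $F$ off which $\psi_{n_k}(g)\ge 1/2$ for all $k\le K$ simultaneously, so $\psi(g)\ge K/2\ge M$ for $g\notin F$. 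Finally, a proper conditionally negative definite function yields a metrically proper affine isometric action by the reverse of the dictionary used above: realize $H$ as the completion of the mean-zero, finitely supported functions on $\Gamma$ under the form induced by $-\psi$, let $\Gamma$ act by translation, and take $b(g)=\delta_g-\delta_e$. This gives (i).

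The genuinely delicate point is the simultaneous control in (iii) $\Rightarrow$ (i): one needs the series $\sum_k\psi_{n_k}$ to converge everywhere, which forces the summands to be small on large balls (using $\varphi_n\to 1$), yet to diverge to $+\infty$ along any sequence leaving every finite set, which forces each summand to approach $1$ at infinity (using $\varphi_n\in C_0(\Gamma)$). Balancing these two opposing requirements through the choice of subsequence and of the exhausting balls is the crux; the remaining implications are routine applications of the cocycle and Schoenberg formalisms.
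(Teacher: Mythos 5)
Your proof is correct. The paper does not prove this statement itself --- it is quoted from \cite{CCJJV01} --- and your argument is precisely the standard one from that reference: the Delorme--Guichardet dictionary $\psi(g)=\|b(g)\|^2$ for (i)$\Rightarrow$(ii), Schoenberg for (ii)$\Rightarrow$(iii), and for (iii)$\Rightarrow$(i) the diagonal summation $\psi=\sum_k(1-\mathrm{Re}\,\varphi_{n_k})$ (whose two competing requirements you correctly identify and resolve, using $0\le\psi_{n}\le 2$ from $|\varphi_n|\le\varphi_n(e)=1$) followed by the GNS-type construction of a proper affine isometric action from a proper conditionally negative definite function.
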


There are further characterizations of the Haagerup property -- for example, it is also known that $\Gamma$ has the Haagerup property if and only if it admits a mixing (or $C_0$) representation containing almost invariant vectors \cite{CCJJV01}. Since $\Gamma$ is amenable if and only if its left-regular representation admits almost invariant vectors, it follows that amenable (e.g.\ abelian or compact) groups satisfy the Haagerup property.

\subsection{Group-Invariant Percolations.}\label{sec background percolation}

We first introduce some graph-theoretic terminology. Let $\skrig=(V(\skrig),E(\skrig))$ be a graph with vertex set $V(\skrig)$ and symmetric edge set $E(\skrig)\subset V(\skrig)\times V(\skrig)$. We denote edges by $[u,v]$. Two vertices $u$ and $v$ are called {\em adjacent} or {\em neighbors}, written $u\sim v$, if $[u,v]$ is an edge. The {\em degree} $\deg(v)=\deg_{\skrig}(v)$ of a vertex $v\in V(\skrig)$ is the number of vertices which are adjacent to it. The {\em graph distance} between two vertices $u,v\in V(\skrig)$ is defined to be the length of a shortest path in $\skrig$ which connects $u$ and $v$ and is denoted by $d(u,v)=d_{\skrig}(u,v)$. A {\em tree} is a connected graph without cycles and a {\em forest} is a graph all of whose connected components are trees. The {\em number of ends} of $\skrig$ is defined to be the supremum of the number of infinite components of $\skrig\setminus K$ over all finite subsets $K$ of $\skrig$; in particular $\skrig$ has {\em one end} if removing any finite set from $\skrig$ leaves exactly one infinite connected component.

An {\em automorphism} of $\skrig$ is a bijection of the set vertex set which preserves adjacency and $\Aut(\skrig)$ denotes the group of all automorphisms of $\skrig$ equipped with the topology of pointwise convergence. A subgroup $G\subset\Aut(\skrig)$ is {\em transitive} if $|V(\skrig)/G|=1$. It is {\em quasi-transitive} if $|V(\skrig)/G|<\infty$. The graph $\skrig$ is called {\em transitive}, resp.\ {\em quasi-transitive}, if $\Aut(\skrig)$ is transitive, resp.\ quasi-transitive. A quasi-transitive graph is called {\em amenable} if $\inf |\partial K|/|K|=0$, where $K$ runs over finite non-empty subsets of $V(\skrig)$ and $\partial K$ denotes {\em edge boundary} of $K$, that is the set of all edges with exactly one endpoint in $K$. If a quasi-transitive graph satisfies $\inf |\partial K|/|K|>0$ it is called {\em non-amenable}.

Given the Cayley graph $\skrig=(\Gamma,E)$ of a finitely generated group $\Gamma$, the action defined by left multiplication identifies $\Gamma$ with a transitive subgroup of $\Aut(\skrig)$. Recall from Section \ref{introduction} that a $\Gamma$-invariant probability measure on subsets of $E$ is called a {\em $\Gamma$-invariant bond percolation}. The percolation configurations are denoted by $\omega$ and we will often identify $\omega$ with the corresponding induced subgraph. A {\em cluster} of $\omega$ is a connected component of the induced subgraph and we write $C(g)$ for the cluster containing $g\in\Gamma$. We write $\deg_\omega(g)$ for the degree of $g\in\Gamma$ as a vertex in the configuration $\omega$. For $g, h \in \Gamma$, the {\em two-point function} 
$$\tau(g,h):= \P\big[ g\leftrightarrow h\big]
$$ 
is defined to be the probability that $g$ and $h$ are in the same cluster of the percolation configuration. For further background on the subject of group-invariant percolation, we refer the interested reader to \cite{BLPS99,G05,L00,LP16,LS99,P22}.

\subsubsection{\bf Positive Definite Functions in Invariant Percolations.} The main link between the objects defined in Section \ref{sec-prel-Haagerup} and invariant percolation is provided by the following lemma.

\begin{lemma}[Two-point functions of invariant percolations are positive definite functions] \label{lemma-TPFPosDef}
Let $\skrig$ be the Cayley graph of a finitely generated group $\Gamma$ and let $\P$ be a $\Gamma$-invariant bond percolation on $\skrig$. Then $\tau(\cdot,\cdot)$ defines a positive definite invariant kernel. In particular $\tau(o,\cdot)$ defines a positive definite function.
\end{lemma}

\proof As pointed out in the introduction, this observation is due to Aizenman and Newman \cite{AN84}. For the convenience of the reader we include a proof. Let $\P$ be a $\Gamma$-invariant bond percolation on $\skrig$ with percolation configuration $\omega$. Invariance of the kernel $\tau(\cdot,\cdot)$ follows from $\Gamma$-invariance of $\P$. Let $g_1,\ldots,g_n\in\Gamma$ and $a_1,\ldots,a_n\in\C$ be arbitrary and let $\skric$ denote the set of open clusters in $\omega$. Then
\begin{flalign*}
\sum_{i,j=1}^n \overline{a_i}a_j\tau(g_i,g_j) &= \sum_{i,j} \overline{a_i}a_j \E\big[ \1_{\{g_i \leftrightarrow g_j\}}\big] = \E \biggl[  \ \sum_{i,j} \overline{a_i}a_j \1_{\{g_i \leftrightarrow g_j\}} \biggr] \\
& =  \E \biggl[ \ \sum_{C\in\skric} \, \sum_{\{g_i,g_j\} \subset C} \overline{a_i}a_j \biggr] = \E \biggl[ \ \sum_{C\in\skric} \, \Bigl| \sum_{g_i \in C} a_i \Bigr|^2 \biggr] \geq 0,
\end{flalign*}
which proves positive definiteness.
\eproof

\subsubsection{\bf Bernoulli Percolation, and the Critical Parameters $p_c$, $p_u$ and $\pv$.}\label{background Bernoulli perc}

As introduced in Section~\ref{introduction}, {\em $p$-Bernoulli bond percolation} is the bond percolation in which every edge is kept independently at random with fixed probability $p\in[0,1]$. By a result of Newman and Schulman \cite{NS81} (see also \cite[Theorem 7.5]{LP16}), it is a consequence of insertion tolerance and ergodicity that the number of infinite clusters in this model is constant a.s.\ with this constant being equal to either $0,1$ or $\infty$. The {\em critical parameter} $p_c=p_c(\skrig)$ is defined as the infimum over all $p\in[0,1]$ such that in $p$-Bernoulli bond percolation, there exists an infinite cluster almost surely. Similarly, the {\em uniqueness threshold} $p_u=p_u(\skrig)$ is the infimum over all $p\in[0,1]$ such that in $p$-Bernoulli bond percolation, there is a unique infinite cluster almost surely. By the well-known {\em Harris-FKG inequality}, increasing events are positively correlated in any Bernoulli bond percolation \cite[Section 5.8]{LP16}.

\begin{example}\label{ex-intro1} \hspace{-1.mm}(Two-point function vanishing at infinity vs.~finite clusters) As mentioned in the introduction, vanishing at infinity of the two-point function does not necessarily imply having no infinite clusters. Indeed, let $T$ denote the $3$-regular tree. It is well-known that $p_c(T)<1$, see e.g.\ \cite[Theorem 5.15]{LP16}. Therefore supercritical $p$-Bernoulli bond percolation $\P_p$, that is to say with $p\in(p_c,1)$, produces an infinite cluster a.s. To see that there are infinitely many infinite clusters, it therefore suffices to rule out the possibility that there is a unique infinite cluster. If there was a unique infinite cluster, then for every two vertices $u$ and $v$ the Harris-FKG inequality and transitivity would imply that $\P_p(u\leftrightarrow v)\geq\P_p(|C(u)|=\infty,|C(v)|=\infty)\geq\P_p(|C(u)|=\infty)^2>0$. This contradicts the fact that the two-point function of $\P_p$ clearly vanishes at infinity.
\end{example}

We now define another threshold $\pv$ pertinent to Bernoulli percolation, which we will relate to the Haagerup property in Theorem \ref{theorem-Spanning}. Let $\skrih$ be a countable, locally finite, connected graph $\skrih=(V,E)$. The need to go beyond Cayley graphs at this stage is due to the fact that we want to relate the Haagerup property to phase transitions in Bernoulli percolation performed on {\em configurations} of invariant percolations, which clearly need not satisfy any form of transitivity. This parameter was introduced by Schonmann \cite[Section 3.1]{S01} as a natural critical point associated with Bernoulli percolation. Following the notation in \cite[Section 3.1]{S01}, we define
$$
\pv \coloneqq \pv (\skrih) \coloneqq \sup \bigg\{ p\in[0,1] \colon  \lim_{r\to\infty} \sup_{\heap{u,v \in V}{d(u,v)>r}} \tau_p(u,v) = 0 \bigg\},
$$
where $\tau_p$ denotes the two-point function of $p$-Bernoulli bond percolation on $\skrih$. A related parameter, also introduced by Schonmann \cite[Section 3.1]{S01}, is defined as
$$
\begin{aligned}
\pl \coloneqq \pl(\skrih) &\coloneqq \inf \bigg\{ p\in[0,1] \colon \inf_{u,v\in V} \tau_p(u,v) > 0 \bigg\} \\
&= \sup \bigg\{ p\in[0,1] \colon \inf_{u,v\in V} \tau_p(u,v) = 0 \bigg\}.
\end{aligned}
$$
It is immediate from the definitions that
\begin{equation} \label{equ-parameters1}
p_c(\skrih) \leq \pv(\skrih) \leq \pl(\skrih).
\vspace{1mm}
\end{equation}
If $\skrih$ is quasi-transitive, an application of the Harris-FKG inequality as in Example \ref{ex-intro1} shows that $\pl\leq p_u$. Thus, for quasi-transitive $\skrih$ we may extend (\ref{equ-parameters1}) as follows
\begin{equation}
p_c(\skrih) \leq \pv(\skrih) \leq \pl(\skrih) \leq p_u(\skrih).
\vspace{1mm}
\end{equation}
By Lyons and Schramm \cite[Theorem 4.1]{LS99}, $p_u=\pl$ for Cayley graphs. We note that for every tree $\pv=\pl=p_u=1$. For a further discussion of these parameters see \cite{S01}.

\subsection{Point Processes.} \label{subsection-PointProcesses}

Let $(X,\skrib)$ be a measurable space. A measure on $X$ is called {\it$s$-finite} if it is a countable sum of finite measures. Note that every $\sigma$-finite measure is $s$-finite. Let $\mathbf N \coloneqq \mathbf N(X)$ denote the space of all measures on $X$ which can be written as countable sums of finite, $\N_0$-valued measures. We equip $\mathbf N$ with the $\sigma$-field $\mathcal N \coloneqq \mathcal N (X)$ generated by the collection of all sets of the form 
$$
\{\mu \in \mathbf N(X) \colon \mu(B)=k\} \quad B\in\skrib,k\in\N_0.
$$ 
A {\it point process} on $X$ is a random element of $(\mathbf N,\mathcal N)$. A point process $\eta$ on $X$ is called {\it proper} if there exist random elements $\xi_1,\xi_2,\ldots$ and an $\overline{\N_0}$-valued random variable $M$ such that 
$$
\eta=\sum_{i=1}^M \xi_i \quad \mbox{a.s.}
$$
Let $\lambda$ be an $s$-finite measure. A {\em Poisson process} with {\em intensity measure} $\lambda$ is a point process $\eta$ on $X$ with the following two properties:
\begin{enumerate}
\item for every $B\in\skrib$ the distribution of $\eta(B)$ is Poisson with parameter $\lambda(B)$, that is to say
$$
\P[\eta(B)=k]= \e^{-\lambda(B)} \frac{\lambda(B)^k}{k!} \quad \text{for every } k \in \N_0;
$$
\item for every $n\in\N$ and every collection of pairwise disjoint sets $B_1,\ldots,B_n\in\skrib$, the random variables $\eta_p(B_1),\ldots,\eta_p(B_n)$ are independent.
\end{enumerate}
We need the standard fact that such processes exist.

\begin{theorem}[Existence of the Poisson process, cf.~{\cite[Theorem 3.6]{LP17}}] \label{theorem-ExistencePoissonProcess} 
Let $\lambda$ be an $s$-finite measure on a measurable space $(X,\skrib)$, then there exists a Poisson process on $X$ with intensity measure $\lambda$.
\end{theorem}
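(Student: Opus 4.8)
The plan is to establish the result first for finite measures and then to obtain the general $s$-finite case by superposition of independent copies. So I would split the argument into two building blocks: (a) if $\lambda$ is a finite measure, construct a Poisson process with intensity $\lambda$ directly; and (b) for an arbitrary $s$-finite $\lambda$, write $\lambda=\sum_{m\geq 1}\lambda_m$ with each $\lambda_m$ finite (possible by the very definition of $s$-finiteness), apply (a) to each $\lambda_m$, and superpose the resulting independent processes.

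For the finite case, suppose $a\coloneqq\lambda(X)<\infty$. If $a=0$ take $\eta\equiv 0$; otherwise, on some probability space let $M\sim\Poi(a)$ and let $\xi_1,\xi_2,\ldots$ be i.i.d.\ $X$-valued with common law $a^{-1}\lambda$, independent of $M$. Define the proper point process $\eta\coloneqq\sum_{i=1}^M\delta_{\xi_i}$. Measurability as a random element of $(\mathbf N,\mathcal N)$ is immediate, since for each $B\in\skrib$ the quantity $\eta(B)=\sum_{i=1}^\infty\1_{\{i\leq M\}}\1_B(\xi_i)$ is a countable sum of random variables. I would then verify properties (i) and (ii) simultaneously via the standard multinomial splitting of a Poisson variable: given pairwise disjoint $B_1,\ldots,B_n\in\skrib$, set $B_0\coloneqq X\setminus\bigcup_k B_k$ and $p_k\coloneqq\lambda(B_k)/a$; conditionally on $M=\ell$ the vector $(\eta(B_0),\ldots,\eta(B_n))$ is multinomial with parameters $(\ell;p_0,\ldots,p_n)$, and a direct computation of the joint probability generating function (or a direct summation over $\ell$ against the Poisson weights) shows that $\eta(B_0),\ldots,\eta(B_n)$ are independent with $\eta(B_k)\sim\Poi(\lambda(B_k))$. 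Taking $n=1$ and $B_1=B$ yields (i), and the full statement yields (ii).

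For the $s$-finite case, I would take independent point processes $\eta_m$ on a common probability space, where $\eta_m$ has intensity $\lambda_m$ (each exists by the finite case just established), and set $\eta\coloneqq\sum_{m\geq 1}\eta_m$. Since a countable sum of elements of $\mathbf N$ again lies in $\mathbf N$, and $\eta(B)=\sum_m\eta_m(B)$ is measurable, $\eta$ is a point process. For any $B\in\skrib$, $\eta(B)$ is a sum of independent $\Poi(\lambda_m(B))$ variables, hence $\Poi\big(\sum_m\lambda_m(B)\big)=\Poi(\lambda(B))$, which is (i). For (ii), the crucial observation is that for disjoint $B_1,\ldots,B_n$ the doubly-indexed family $\{\eta_m(B_k)\colon m\geq 1,\ 1\leq k\leq n\}$ is independent --- across $m$ by construction and, for each fixed $m$, across $k$ by property (ii) for $\eta_m$ --- so the column sums $\eta(B_k)=\sum_m\eta_m(B_k)$, being measurable functions of disjoint blocks of an independent family, are independent.

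The step I expect to require the most care is the finite-case splitting computation underlying independence (property (ii)), as this is the only place where the Poisson distribution is genuinely exploited; everything afterward is bookkeeping. A minor but genuine point of caution throughout is the consistent handling of the value $+\infty$: one must allow $\eta(B)=\infty$ (permitted by the definition of $\mathbf N$) and read the Poisson formula in (i) with $\e^{-\infty}=0$, so that $\lambda(B)=\infty$ forces $\eta(B)=\infty$ a.s.; one then checks that the superposition still defines an element of $\mathbf N$ even though individual masses $\eta(B)$ may be infinite.
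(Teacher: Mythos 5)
Your proposal is correct. The paper does not prove this statement itself --- it is quoted directly from Last and Penrose \cite[Theorem 3.6]{LP17} --- and your argument (the mixed binomial construction $\sum_{i=1}^M\delta_{\xi_i}$ with $M\sim\Poi(\lambda(X))$ for finite $\lambda$, verified via multinomial splitting, followed by superposition of independent copies for the $s$-finite case) is precisely the standard proof given in that reference, including the correct handling of the joint independence of the doubly-indexed family $\{\eta_m(B_k)\}$ and of the degenerate case $\lambda(B)=\infty$.
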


A useful property of Poisson processes is that they satisfy a version of the Harris-FKG inequality, which we now recall \cite[Section 20.3]{LP17}. Given $B\in\skrib$, a measurable real-valued function $f$ on $\mathbf N(X)$ is called {\em increasing on} $B$ if 
$$
f(\mu+\delta_x)\geq f(\mu) \quad \mbox{for all} \, \, \mu\in\mathbf N(X) \, \, \mbox{and all} \, \, x\in B,
$$
and is called {\em decreasing on} $B$ if $(-f)$ is increasing on $B$.

\begin{theorem}[Harris-FKG inequality for the Poisson process, cf.~{\cite[Theorem 20.4]{LP17}}] \label{theorem-Harris-FKG-Poisson}
Let $\eta$ be a proper Poisson process on a measurable space $(X,\skrib)$ with $\sigma$-finite intensity measure $\lambda$ and law $\P_\eta$. Let $B\in\skrib$ and let $f,f'\in L^2(\P_\eta)$ be increasing on $B$ and decreasing on $X\setminus B$, then 
$$
\E\big[f(\eta)f'(\eta)\big] \geq \E\big[f(\eta)\big] \, \E\big[f'(\eta)\big].
$$
\end{theorem}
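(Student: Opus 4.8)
The plan is to reduce the statement to the classical Harris--FKG inequality for finitely many independent random variables, via a discretization of $X$ together with a martingale approximation of $f$ and $f'$. This is the most hands-on route; I indicate a slicker but less elementary alternative at the end.

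First I would choose a refining sequence of finite measurable partitions $\mathcal{P}_1 \prec \mathcal{P}_2 \prec \cdots$ of $X$ with the properties that every cell of every $\mathcal{P}_n$ has finite $\lambda$-measure and lies entirely inside $B$ or entirely inside $X\setminus B$ (possible since $B\in\skrib$ and $\lambda$ is $\sigma$-finite), and that the $\sigma$-fields $\mathcal{F}_n$ generated by the counting variables $\{\eta(C):C\in\mathcal{P}_n\}$ increase to $\mathcal{N}$ up to $\P_\eta$-null sets (this uses the standing assumption that the space is suitably nice). By the defining independence property of a Poisson process, for each fixed $n$ the counts $(\eta(C))_{C\in\mathcal{P}_n}$ are independent Poisson variables, so $f_n:=\E[f(\eta)\mid\mathcal{F}_n]$ and $f'_n:=\E[f'(\eta)\mid\mathcal{F}_n]$ are functions of finitely many independent random variables.

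The key step is to show that $f_n$ is increasing in the counts of the cells $C\subseteq B$ and decreasing in the counts of the cells $C\subseteq X\setminus B$, and likewise for $f'_n$. I would prove this by a coupling, using the conditional description of the Poisson process: given the cell counts, the points inside each cell $C$ (with $\lambda(C)>0$) are independent and $\lambda|_C/\lambda(C)$-distributed, independently across cells. Increasing the count in a fixed cell $C_0\subseteq B$ from $k$ to $k+1$, with the remaining points held fixed, can be realized by adding one further independent point $y\in C_0\subseteq B$; since $f$ is increasing on $B$ we have $f(\mu+\delta_y)\geq f(\mu)$, and averaging over the common randomness gives $\E[f\mid \text{count }k+1]\geq\E[f\mid\text{count }k]$. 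The case $C_0\subseteq X\setminus B$ is symmetric with the inequality reversed, so the sign structure of $f$ survives the conditioning. Reversing the coordinate order on the cells inside $X\setminus B$ then makes $f_n$ and $f'_n$ both coordinatewise increasing functions of independent random variables, and the classical Harris inequality (induction on the number of cells, from nonnegativity of the covariance of two monotone functions of a single totally ordered variable) yields $\E[f_n f'_n]\geq\E[f_n]\,\E[f'_n]$. Finally, $(f_n)$ and $(f'_n)$ are $L^2$-bounded Doob martingales converging in $L^2(\P_\eta)$ to $f(\eta)$ and $f'(\eta)$; hence $\E[f_n f'_n]\to\E[f f']$ while $\E[f_n]=\E[f]$ and $\E[f'_n]=\E[f']$ for all $n$, and passing to the limit proves the claim.

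The main obstacle is precisely the monotonicity-preservation step: one must verify that conditioning on cell counts does not destroy the sign pattern of $f$, which is exactly where the explicit within-cell description of the Poisson process and the one-point coupling are indispensable; the partition bookkeeping and the $L^2$ martingale passage are then routine. I would remark that an alternative proof avoids discretization entirely: using the Poisson covariance identity together with the fact that the Ornstein--Uhlenbeck (Mehler) semigroup preserves the relevant monotonicity, one checks that the difference operators $D_x f$ and $D_x f'$ share a common sign for every $x$ (nonnegative for $x\in B$, nonpositive for $x\in X\setminus B$), which forces the integrand in the covariance formula to be pointwise nonnegative and hence $\mathrm{Cov}(f,f')\geq 0$.
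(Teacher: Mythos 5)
The paper does not prove this statement: it is quoted verbatim from Last and Penrose \cite[Theorem 20.4]{LP17}, so there is no internal proof to compare against. Your argument is nonetheless correct in outline and is a legitimate, essentially self-contained alternative to the proof in the cited source. That reference derives the inequality from a covariance identity built on the Mehler (thinning) semigroup and the add-one-point difference operator $D_x$ --- exactly the route you sketch in your closing remark --- whereas your main argument is the more elementary discretization: independence of the cell counts, preservation of the sign pattern of $f$ and $f'$ under conditioning via the one-point coupling, the finite-dimensional Harris inequality after flipping coordinates on the cells inside $X\setminus B$, and an $L^2$ Doob martingale passage. You correctly identify the monotonicity-preservation step as the crux; note that this is also precisely where the properness hypothesis enters, since the description of $\eta$ restricted to a cell of finite positive intensity as i.i.d.\ points given the count requires the process to be a.s.\ a sum of Dirac masses. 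What the elementary route buys is independence from the Fock-space and semigroup machinery; what the covariance-identity route buys is a one-line conclusion once that machinery is in place, plus a quantitative formula for the covariance.

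Two bookkeeping points deserve a sentence each in a complete write-up, though neither is a genuine gap. First, a finite partition of all of $X$ cannot consist entirely of cells of finite $\lambda$-measure when $\lambda(X)=\infty$; you should either partition an exhausting sequence $X_n\uparrow X$ of finite-measure sets and treat the configuration on the complement as unconditioned common randomness in the coupling, or allow one infinite-measure cell whose count is a.s.\ infinite and hence carries no information. Second, the requirement that $\bigvee_n\mathcal{F}_n$ exhaust $\mathcal{N}$ up to $\P_\eta$-null sets cannot be met for an arbitrary measurable space, but it is not needed in that generality: $f(\eta)$ and $f'(\eta)$ are two fixed random variables, hence measurable with respect to a countably generated sub-$\sigma$-field of $\mathcal{N}$, and it suffices to choose the partitions so that $\bigvee_n\mathcal{F}_n$ contains that sub-$\sigma$-field modulo null sets. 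With these repairs the proof is complete.
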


\medskip

\section{\bf Haagerup Property and Quantitative Estimates} \label{haagerup}

In this section, we will prove Theorem \ref{maintheorem-Haagerup} and provide our general construction and develop quantative estimates. Theorem~\ref{maintheorem-Haagerup} is proved in Section \ref{subsection-MeasuredWalls}. Before considering this general case, we demonstrate our result for two sub-classes of groups with the Haagerup property: groups acting properly on trees, see Section \ref{subsection-Trees}, and groups acting properly on spaces with walls, see Section \ref{subsection-Walls}. In these cases, the construction is more concrete and they can thus serve as useful intuition. Moreover, they provide interesting examples, which are illustrated along the way. In Section~\ref{sec-generalized}, we state the generalized version of our construction. We then provide the sufficient criterion for the Haagerup property in terms of Bernoulli percolation, see Section \ref{section-HaagerupBernoulli}. In Section~\ref{section-HaagerupSite}, we demonstrate the modifications required to obtain the analogous results for site percolation. In Section~\ref{subsec clusterfreq}, we show that invariant percolations whose two-point function vanishes at infinity do not have finitely many infinite clusters.

\subsection{Percolation and Groups Acting on Trees.} \label{subsection-Trees}

Let $T$ be a locally finite tree. We say that $\Gamma$ {\em acts properly on $T$} if it acts on $T$ via automorphisms and such that $|\{ g\in\Gamma \colon gF \cap F \neq \emptyset \}|<\infty$ for every finite $F\subset T$.

\begin{prop} \label{prop-GroupsActingOnTrees} 
Let $\skrig$ be the Cayley graph of a finitely generated group $\Gamma$. If $\Gamma$ acts properly on a locally finite tree, then for every $\alpha<1$, there exists a $\Gamma$-invariant bond percolation $\P$ on $\skrig$ such that $\E[\deg_\omega(o)]>\alpha\deg(o)$ and $\tau(o,\cdot)\in C_0(\Gamma)$.
\end{prop}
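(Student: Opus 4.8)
The plan is to realize the edges of $T$ as walls separating the points $\{g x_0 : g \in \Gamma\}$ for a fixed base vertex $x_0 \in V(T)$, and to build the percolation by independently cutting these walls with a small probability, keeping an edge of $\skrig$ precisely when it crosses no cut wall. Each edge $e$ of $T$ gives a wall, namely the partition of $V(T)$ into the two components of $T \setminus \{e\}$; since $T$ is a tree, the walls separating two vertices $u, v$ are exactly the edges on the unique geodesic $[u,v]$, so their number equals $d_T(u,v)$. Concretely, I would fix $q \in (0,1)$, attach to the edges of $T$ i.i.d.\ Bernoulli$(q)$ variables $(\xi_e)_e$ (calling $e$ \emph{cut} when $\xi_e = 1$), and define $\omega$ by retaining the edge $[g,h]$ of $\skrig$ if and only if $\xi_e = 0$ for every $e$ on the geodesic $[g x_0, h x_0]$.

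The invariance and marginal bounds are then straightforward. Since $\Gamma$ acts on $E(T)$ by automorphisms and the product law $\mathrm{Bernoulli}(q)^{\otimes E(T)}$ is invariant under permutation of coordinates, and since each $\gamma$ maps the geodesic $[g x_0, h x_0]$ onto $[\gamma g x_0, \gamma h x_0]$, the assignment $(\xi_e) \mapsto \omega$ is $\Gamma$-equivariant; hence $\P$ is $\Gamma$-invariant. For the marginal, an edge $[o, s]$ (with $s$ a generator) survives with probability $(1-q)^{d_T(x_0, s x_0)}$, because its separating walls number $d_T(x_0, s x_0)$ and are cut independently. As there are finitely many generators, $\E[\deg_\omega(o)] = \sum_s (1-q)^{d_T(x_0, s x_0)} \to \deg(o)$ as $q \to 0$, so choosing $q = q(\alpha)$ small enough yields $\E[\deg_\omega(o)] > \alpha \deg(o)$.

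The crux is the decay of $\tau$, which I would obtain by using cut walls as impassable barriers. If a cut wall separates $x_0$ from $h x_0$, then any retained edge $[g, g']$ has both endpoints $g x_0, g' x_0$ on the same side of that wall (otherwise it would have been deleted); consequently, along any $\omega$-path from $o$ to $h$ the side of the wall stays constant, contradicting that $o x_0 = x_0$ and $h x_0$ lie on opposite sides. Thus $\{o \leftrightarrow h\}$ forces every wall separating $x_0$ and $h x_0$ to be uncut, whence $\tau(o, h) \le (1-q)^{d_T(x_0, h x_0)}$. Finally, properness of the action makes $g \mapsto d_T(x_0, g x_0)$ a proper function on $\Gamma$: applying the properness condition to the finite set $\{x_0\} \cup B_T(x_0, R)$ shows $\{g : d_T(x_0, g x_0) \le R\}$ is finite for every $R$, so $d_T(x_0, h x_0) \to \infty$ as $d(o, h) \to \infty$, giving $\tau(o, \cdot) \in C_0(\Gamma)$.

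I expect the barrier argument for $\tau$ to be the one genuinely substantive step; the equivariance, the marginal computation, and the passage from properness of the action to properness of $g \mapsto d_T(x_0, g x_0)$ are all routine. This tree construction is precisely the measured-walls philosophy made concrete, and I anticipate it serving as the template for the general case.
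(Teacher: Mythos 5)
Your construction is the paper's construction in different clothing: cutting each tree edge independently with probability $q$ and retaining $[g,h]$ iff no edge of the geodesic $[gx_0,hx_0]$ is cut is exactly the paper's rule that $[g,h]$ survives iff $gx_0$ is connected to $hx_0$ in $(1-q)$-Bernoulli percolation on $T$, and your barrier argument, marginal computation, and use of properness match the paper's proof step for step. The proposal is correct and takes essentially the same approach.
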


\proof Let $T=(V(T),E(T))$ be a locally finite tree on which $\Gamma$ acts properly and fix a root $x_0 \in V(T)$. For $p\in[0,1]$, let $\P_p$ denote $p$-Bernoulli bond percolation on $T$. If $p<1$ we have 
\begin{equation} \label{equ: TPF decay tree}
\lim_{d(x_0,x) \to \infty} \P_p\big[x_0 \leftrightarrow x\big] = 0
\end{equation}
because the removal of any edge in the unique path between $x_0$ and $x$ disconnects the two vertices. 

Let $\xi_p$ denote the configuration of a $p$-Bernoulli bond percolation on $T$. We define the configuration $\omega_p$ of a bond percolation on $\skrig=(V,E)$ as follows: for each edge $e=[g,h]\in E$, let $e$ be in $\omega_p$ if and only if one of the following holds:
\begin{enumerate}
\item $gx_0=hx_0$,
\item $gx_0\neq hx_0$ but $gx_0$ is connected to $hx_0$ in $\xi_p$.
\end{enumerate}
Let $\bP_p$ denote the law of $\omega_p$. Note that $\mathbf P_p$ defines a $\Gamma$-invariant bond percolation on $\skrig$. Indeed, this follows from the fact that it is a {\em factor}, i.e.~push-forward under an equivariant measurable map, of the $\Aut(T)$-invariant measure $\mathbb P_p$.

We note that $\bP_p$ clearly has marginals arbitrarily close to $1$ for $p$ sufficiently close to $1$. This implies that $\bE_p[\deg_\omega(o)]>\alpha \deg(o)$ for $p$ sufficiently close to $1$. Hence it suffices to prove that the two-point function $\tau_p(o,g)=\bP_p[o\leftrightarrow g]$ of $\bP_p$ satisfies $\tau_p(o,\cdot)\in C_0(\Gamma)$ for every $p<1$. To show this, simply observe that on the event $\{o\leftrightarrow g\}$ the unique path which joins $x_0$ to $gx_0$ must be contained in $\xi_p$. Thus 
$$
\tau_p(o,g) = \bP_p\big[o \leftrightarrow g\big] \leq \P_p\big[x_0 \leftrightarrow gx_0\big].
$$
Moreover, since the action of $\Gamma$ on $T$ is proper, we have that $\lim_{g \to\infty} d(x_0,gx_0)=\infty$. Combined with Equation (\ref{equ: TPF decay tree}) we therefore obtain
$$
\lim_{g \to \infty} \tau_p(o,g) \leq \lim_{g \to \infty} \P_p\big[x_0 \leftrightarrow gx_0\big]=0,
$$
which concludes the proof. \eproof

\begin{example} \label{example-WordLengthFree} \hspace{-1.5mm}(Bernoulli-percolation-proof that tree distances are conditionally negative definite)
Let us comment on the particular example when $\Gamma$ is a finitely generated free group acting on its canonical Cayley graph. In this case it is easy to see that the two-point function of $p$-Bernoulli bond percolation is given by
$$
\tau_p(g,h)=p^{d(g,h)}=\e^{\log(p)d(g,h)}.
$$
Consider the distance function $|g|=d(o,g)$ on $\Gamma$. Since $p a\mapsto \log(p)$ is a bijection from $(0,1)$ to $(-\infty,0)$, we obtain that the function $g \mapsto \e^{-\lambda|g|}$ is positive definite for every $\lambda>0$. By Schoenberg's theorem, the distance function is conditionally negative definite, see e.g.\ \cite[Theorem C.3.2]{BdlHV08}. This recovers a fundamental result due to Haagerup \cite[Lemma 1.2]{H79} using a short probabilistic proof which involves only the most natural percolation model, Bernoulli percolation. Let us point out that an even easier proof was already known, which involves showing that the distance function is measure definite, which immediately implies conditional negative definiteness -- measure definiteness in turn may be seen by writing down explicitly an embedding of the Cayley tree into an $L^1$-space or by writing the distance as a wall distance; these ideas appear for example in \cite{CMV04,CSV12,L13}. Our purpose of presenting the above probabilistic proof is to underline  a motivating idea, which as we will explore, will generalize to a much broader class of groups with the Haagerup property for which the distance function may be far from being conditionally negative definite.  
\end{example}

\subsection{Percolation and Groups Acting on Spaces with Walls.}\label{subsection-Walls}

The following terminology was introduced by Haglund and Paulin in \cite{HP98}. Let $X$ be a set and let $\skriw$ be a set of partitions of $X$ into two classes. A partition $W\in\skriw$ is called a {\em wall} and we say that $W$ {\em separates} two distinct points $x$ and $y$ in $X$ if the points belong to different classes in $W$. The pair $(X,\skriw)$ is a {\em space with walls} if, for every two distinct points $x$ and $y$ in $X$, the set
$$
\skriw(x,y) \coloneqq \{ W \in \skriw \colon W \text{ separates } x \text{ and } y \} 
$$
is finite. In this case $w(x,y) \coloneqq |\skriw(x,y)|$ is called the {\em number of walls separating $x$ and $y$}. We say that a finitely generated group $\Gamma$ {\em acts properly} on a space with walls $(X,\skriw)$ if $\Gamma$ acts on $X$, preserves the family of walls and for some $x_0\in X$ (and hence for all $x\in X$) the function $\Gamma \to \N_0 \, , \, g \mapsto w(x_0,gx_0)$ is {\em proper}, that is to say $\lim_{g\to\infty} w(x_0,gx_0)=\infty.$

\begin{prop} \label{prop-GroupsActingOnSpacesWithWalls} 
Let $\skrig$ be the Cayley graph of a finitely generated group $\Gamma$. If $\Gamma$ acts properly on a space with walls, then for every $\alpha<1$, there exists a $\Gamma$-invariant bond percolation $\P$ on $\skrig$ such that $\E[\deg_\omega(o)]>\alpha\deg(o)$ and $\tau(o,\cdot)\in C_0(\Gamma)$.
\end{prop}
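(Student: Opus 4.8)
The plan is to mirror the tree construction of Proposition \ref{prop-GroupsActingOnTrees}, with the role of Bernoulli percolation on the tree now played by an i.i.d.\ Bernoulli selection of walls. I would fix a basepoint $x_0\in X$ and, for $p\in[0,1]$, independently declare each wall $W$ to be \emph{cut} with probability $1-p$ and \emph{uncut} with probability $p$; write $\chi$ for the resulting random set of cut walls. Before doing so I would reduce to a countable, $\Gamma$-invariant family of relevant walls, namely $\skriw_0:=\bigcup_{g,h\in\Gamma}\skriw(gx_0,hx_0)$, which is countable because $\Gamma$ is countable and each $\skriw(gx_0,hx_0)$ is finite, and is $\Gamma$-invariant because $\gamma\skriw(gx_0,hx_0)=\skriw(\gamma g x_0,\gamma h x_0)$; restricting the marks to $\skriw_0$ removes any measure-theoretic difficulty coming from $\skriw$ possibly being uncountable. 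Given $\chi$, I would define a configuration $\omega_p$ on $\skrig=(V,E)$ by declaring an edge $e=[g,h]$ open iff either $gx_0=hx_0$, or $gx_0\neq hx_0$ and $\chi\cap\skriw(gx_0,hx_0)=\emptyset$, and let $\bP_p$ be the law of $\omega_p$.

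The invariance and marginal computations I expect to be routine. For $\Gamma$-invariance, since $\Gamma$ acts on $X$ by wall-preserving bijections, $W$ separates $gx_0,hx_0$ iff $\gamma W$ separates $\gamma g x_0,\gamma h x_0$, so the map from the mark configuration to $\omega_p$ is $\Gamma$-equivariant; as the law of $\chi$ is an invariant i.i.d.\ product over the invariant set $\skriw_0$, the pushforward $\bP_p$ is $\Gamma$-invariant. For the marginals one has $\bP_p[e\in\omega_p]=1$ when $gx_0=hx_0$ and $\bP_p[e\in\omega_p]=p^{w(gx_0,hx_0)}$ otherwise, with $w(gx_0,hx_0)<\infty$; summing over the finitely many neighbours of $o$ and letting $p\uparrow1$ gives $\bE_p[\deg_\omega(o)]\to\deg(o)$, so $\bE_p[\deg_\omega(o)]>\alpha\deg(o)$ once $p$ is close enough to $1$.

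The crux is the decay of the two-point function for fixed $p<1$, and the main obstacle is establishing the inclusion
\[
\{o\leftrightarrow g\}\subseteq\{\chi\cap\skriw(x_0,gx_0)=\emptyset\},
\]
which is the unrooted analogue of the tree fact that an open path must traverse every separating edge. To prove it I would take any open path $o=v_0,v_1,\dots,v_k=g$ and any wall $W\in\skriw(x_0,gx_0)$: since $v_0x_0=x_0$ and $v_kx_0=gx_0$ lie in different classes of $W$, there is an index $i$ with $v_ix_0$ and $v_{i+1}x_0$ in different classes, so $v_ix_0\neq v_{i+1}x_0$ and $W\in\skriw(v_ix_0,v_{i+1}x_0)$; openness of $[v_i,v_{i+1}]$ then forces every wall separating its endpoints to be uncut, hence $W\notin\chi$. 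This yields the inclusion and therefore
\[
\tau_p(o,g)=\bP_p[o\leftrightarrow g]\leq p^{w(x_0,gx_0)}.
\]

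Finally, properness of the action gives $w(x_0,gx_0)\to\infty$ as $g\to\infty$, so for any fixed $p<1$ the bound $p^{w(x_0,gx_0)}$ tends to $0$, which is exactly $\tau_p(o,\cdot)\in C_0(\Gamma)$. The only technical care required is the reduction to the countable invariant family $\skriw_0$ so that the Bernoulli marking is well defined; the conceptual heart is the crossing argument above, which is precisely the point where properness of the number-of-walls function enters to force decay.
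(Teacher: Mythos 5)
Your proposal is correct and follows essentially the same route as the paper: an i.i.d.\ Bernoulli marking of the walls, the equivariance of the induced edge configuration, and the crossing argument showing that every wall in $\skriw(x_0,gx_0)$ must separate the endpoints of some edge on any open path, yielding $\tau_p(o,g)\leq p^{w(x_0,gx_0)}$ and decay by properness. Your preliminary reduction to the countable invariant family $\skriw_0$ is a harmless extra precaution (the paper simply takes the i.i.d.\ family indexed by all of $\skriw$, and performs the analogous restriction only in the measured-walls setting of Theorem \ref{lemma-GeneralConstruction}).
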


\proof Let $\Gamma$ act properly on the space with walls $(X,\skriw)$ and fix a point $x_0\in X$. Let $p\in(0,1)$ and let $\{Z_W\}_{W \in \skriw}$ be iid Bernoulli-$p$-distributed random variables defined on some probability space $(\Omega,\skrif,\P)$. We define the configuration $\omega_p$ of a bond percolation on $\skrig=(V,E)$ as follows: for each edge $e=[g,h]\in E$, let $e$ be contained in $\omega_p$ if and only if one of the following holds:
\begin{enumerate}
\item $gx_0=hx_0$,
\item $gx_0\neq hx_0$ and for every $W \in \skriw(gx_0,hx_0)$ we have $Z_W=1$.
\end{enumerate}
Let $\bP_p$ denote the law of $\omega_p$. Then $\mathbf P_p$, as a factor of an iid product measure, defines a $\Gamma$-invariant bond percolation.

We observe that $\bP_p$ has marginals arbitrarily close to $1$ for $p$ sufficiently close to $1$: consider any edge $e=[g,h]\in E$, then
\begin{flalign*}
\bP_p\big[ [g,h] \in \omega_p \big] & = \1_{\{gx_0=hx_0\}} + \1_{\{gx_0\neq hx_0\}}\P\big[ Z_W=1 \text{ for all } W \in \skriw(gx_0,hx_0)\big] \\
& =  \1_{\{gx_0=hx_0\}} + \1_{\{gx_0\neq hx_0\}} p^{w(gx_0,hx_0)},
\end{flalign*}
which converges to $1$ as $p\to1$. Hence $\bE_p[\deg_\omega(o)]>\alpha \deg(o)$ for $p$ sufficiently close to $1$. It remains to show that the two-point function $\tau_p(o,g)=\bP_p[o\leftrightarrow g]$ of $\bP_p$ satisfies $\tau_p(o,\cdot)\in C_0(\Gamma)$. To see this let $g\in\Gamma$ be arbitrary, let $o=h_0,h_1,h_2,\ldots,h_n=g$ be any path of vertices joining $o$ to $g$ in $\skrig$ and let $W$ be any wall separating $x_0$ and $gx_0$. Let $B \sqcup B^c=X$ be the partition determined by $W$ and assume without loss of generality that $x_0\in B$. By assumption $gx_0\in B^c$ and thus 
$$
i \coloneqq \max \{ j \colon 0\leq j \leq n-1 \text{ and } h_j x_0 \in B\}
$$
is well defined and satisfies $h_i x_0 \in B$ as well as $h_{i+1} x_0 \in B^c$. In other words the wall $W$ also separates $h_i x_0$ and $h_{i+1} x_0$. With this observation, we now prove the claim: assume that $o$ is connected to $g$ in $\omega_p$, then there exists some path of open edges $e_1,\ldots,e_n$ joining $o$ to $g$ in $\skrig$. The above argument shows that for every wall $W\in \skriw(x_0,gx_0)$ there exists an edge $e=[h,\gamma] \in \{e_1,\ldots,e_n\}$ such that $W\in\skriw(hx_0,\gamma x_0)$. By definition, $e$ being open implies that $Z_W=1$. Therefore we obtain that on the event $\{o\leftrightarrow g\}$, we must have $Z_W=1$ for every $W \in \skriw(x_0,gx_0)$. Thus
$$
\tau_p(o,g) = \bP_p\big[o \leftrightarrow g\big] \leq \P\big[ Z_W = 1 \text{ for every } W \in \skriw(x_0,gx_0)\big] = p^{w(x_0,gx_0)}.
$$
Since $\Gamma$ acts properly, it follows that
$$
\lim_{g\to\infty} \tau_p(o,g) \leq \lim_{g\to\infty} p^{w(x_0,gx_0)}=0,
$$
which concludes the proof. \eproof

\begin{example}\label{ex-wallsZd} \hspace{-1.mm}(Wall structures for Euclidean lattices and trees)  
We give two familiar examples of spaces with walls:

$\bullet$ A locally finite tree $T$ has a natural structure of space with walls by declaring $\skriw$ to consist of partitions into two connected components which are obtained by removing an edge. A group acts properly on $(T,\skriw)$ if and only if it acts properly on the tree $T$.

$\bullet$ The graph $\Z^2$ can be split into two discrete half-spaces orthogonal to the horizontal axis, resp.\ vertical axis, by cutting for fixed $x\in\Z$ along all edges of the form $[(x,y),(x+1,y)]$, resp.\ by cutting for fixed $y\in\Z$ along all edges of the form $[(x,y),(x,y+1)]$. This defines a family of walls on $\Z^2$ such that the number of walls separating two points is equal to their graph distance. Clearly the same applies in the case of $\Z^d$ for some $d\in\N$. 
\end{example}

\subsection{Invariant Point Processes and Groups Acting on Spaces with Measured Walls.} \label{subsection-MeasuredWalls}

 In this section, we will prove Theorem \ref{maintheorem-Haagerup}. As mentioned in Section \ref{sec novel ideas}, a key idea will be to introduce invariant Poisson point processes on suitable infinite measure spaces. These measure spaces generalize the spaces used in Section \ref{subsection-Trees} and Section \ref{subsection-Walls} in such a way that the main intuition behind the constructions of the previous two sections is still valid. For this purpose, the appropriate generalization turns out to be the concept of a space with measured walls, which is due to Cherix, Martin and Valette~\cite{CMV04}, and which we now recall.
 
 Let $X$ be a set, $\skriw$ a set of walls on $X$, $\skrib$ a $\sigma$-algebra on $\skriw$ and $\mu$ a measure on $\skrib$. The $4$-tuple $(X,\skriw,\skrib,\mu)$ is a {\em space with measured walls} if, for every two distinct points $x$ and $y$ in $X$, the set $\skriw(x,y)$ of walls separating $x$ and $y$ is $\skrib$-measurable and $w(x,y)\coloneqq\mu(\skriw(x,y))<\infty$. Note that if $\mu$ is a counting measure, then we recover a space with walls. We say that a finitely generated group $\Gamma$ {\em acts properly} on a space with measured walls $(X,\skriw,\skrib,\mu)$ if $\Gamma$ acts on $X$, preserves the family of walls, preserves the function $w(\cdot,\cdot)$ and for some $x_0\in X$ (and hence for all $x\in X$) the function $\Gamma \to [0,\infty) \, , \, g \mapsto w(x_0,gx_0)$ is proper. This again extends the definition of a proper action on a space with walls in the case that $\mu$ is a counting measure. Note that 
\begin{equation} \label{equ:defwalldistance}
\Gamma\times\Gamma\to[0,\infty) \, , \, D(g,h)\coloneqq w(gx_0,hx_0)
\end{equation} 
defines a pseudo-distance on $\Gamma$ which we call the {\em wall distance}.

It was proved in \cite[Proposition 1]{CMV04} that a locally compact group which acts properly on a space with measured walls has the Haagerup property. Moreover it was proved in \cite[Theorem 1.(1)]{CMV04} that a countable discrete group with the Haagerup property admits a proper action on a space with measured walls. In fact, building on the ideas developed in \cite{RS98}, the proof given in \cite{CMV04}  provides explicitly a space with measured walls and a corresponding proper action, see also \cite[Proposition 7.5.1]{CCJJV01}. Namely, for a finitely generated group $\Gamma$, we may consider the following:
\begin{itemize}
\item[(1)] Let $X=\Gamma$ on which $\Gamma$ acts via left-multiplication and let 
$$
\Omega = \{0,1\}^\Gamma \setminus \{ \omega_0, \omega_1 \},
$$
where $\omega_0(g)=0$ and $\omega(g)=1$ for every $g\in\Gamma$ -- i.e.~we consider the set of all non-empty and non-full subsets of $\Gamma$, equipped with the Borel $\sigma$-algebra $\skrib(\Omega)$. Then $\Gamma$ has a natural measurable action on $\Omega$ defined by $g\omega(x)=\omega(g^{-1}x)$.
\item[(2)] For $\omega\in\Omega$ define $W_\omega=\{\omega,\omega^c\}$ and let $\skriw=\{W_\omega \colon \omega \in \Omega\}$. Clearly $\skriw$ defines a family of walls on $X$ which is preserved by the action of $\Gamma$.
\item[(3)] The set $\skriw$ can be identified with the quotient of $\Omega$ by the fixed-point free involution $\omega \mapsto \omega^c$. Let $\skrib$ be the direct image of $\skrib(\Omega)$. 
\item[(4)] For $g\in\Gamma$ define $S_g=\{\omega\in\Omega \colon \omega(g)=1\} \in \skrib(\Omega)$. Let $\psi$ be a conditionally negative definite function on $\Gamma$. By \cite[Proposition 1.4]{RS98}, there exists a regular Borel measure $\nu$ on $\Omega$ such that
$$
\sqrt{\psi(g^{-1}h)}=\nu(S_g\Delta S_h) \quad \text{for all } g,h\in\Gamma.
$$
In fact, every open subset of $\Omega$ is $\sigma$-compact and we may assume that $\nu(K)<\infty$ for every compact subset $K\subset\Omega$, see \cite[Proof of Proposition 1.2]{RS98}. In particular, $\nu$ is $\sigma$-finite.
Moreover, it follows from the proof of \cite[Theorem 2.1]{RS98} that $\nu$ is $\Gamma$-invariant. Let $\mu$ denote the direct image of $\nu$ on $\skriw$. In particular, $\mu$ is $\Gamma$-invariant, $\sigma$-finite and
\begin{equation} \label{def mu}
\mu(\skriw(g,h)) = \nu(S_g\Delta S_h) = \sqrt{\psi(g^{-1}h)} < \infty \quad \text{for all } g,h\in\Gamma.
\end{equation}
\end{itemize}
Then $(\Gamma,\skriw,\skrib,\mu)$ defines a space with measured walls. Moreover, the induced $\Gamma$ action on the set of walls is measurable, leaves $\mu$ invariant and satisfies $\skriw(\gamma g,\gamma h)= \gamma \skriw(g,h)$. In particular the function $w(\cdot,\cdot)$ is preserved. Now if $\Gamma$ has the Haagerup property then there exists a proper, conditionally negative definite function $\psi$ on $\Gamma$. In this case, $\Gamma$ clearly acts properly on $(\Gamma,\skriw,\skrib,\mu)$.

\begin{theorem} \label{theorem-GroupsWithTheHaagerupProperty} 
Let $\skrig$ be the Cayley graph of a finitely generated group $\Gamma$. If $\Gamma$ has the Haagerup property, then for every $\alpha<1$, there exists a $\Gamma$-invariant bond percolation $\P$ on $\skrig$ such that $\E[\deg_\omega(o)]>\alpha\deg(o)$ and $\tau(o,\cdot)\in C_0(\Gamma)$.
\end{theorem}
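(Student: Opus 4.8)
The plan is to carry the construction of Proposition~\ref{prop-GroupsActingOnSpacesWithWalls} over from spaces with walls to spaces with measured walls, replacing the family of i.i.d.\ ``keep/remove'' variables $\{Z_W\}$ by a single Poisson point process describing the \emph{removed} walls. First I would invoke the hypothesis in the form recalled just above: by Theorem~\ref{theorem-HaagerupProperty} there is a proper conditionally negative definite function $\psi$ on $\Gamma$, and the construction of \cite{CMV04,RS98} then yields a space with measured walls $(X,\skriw,\skrib,\mu)$ carrying a proper, measure-preserving action of $\Gamma$ with $\mu(\skriw(gx_0,hx_0))=\sqrt{\psi(g^{-1}h)}$. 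It is convenient to take the canonical model $X=\Gamma$, $x_0=e$, with $\Gamma$ acting by left multiplication: this action is free, so $gx_0\neq hx_0$ for distinct $g,h$ and the degenerate alternative~(1) in Proposition~\ref{prop-GroupsActingOnSpacesWithWalls} never arises for genuine edges, while $w(x_0,gx_0)=\sqrt{\psi(g)}$ is proper by assumption on $\psi$.

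The subtlety in passing to the continuous setting is that $\mu$ need not be $\sigma$-finite on all of $\skriw$. I would therefore restrict to the sub-collection $\skriw^*:=\bigcup_{g\in\Gamma,\,s\in S}\skriw(gx_0,gsx_0)$ of walls capable of separating the endpoints of some edge, where $S$ denotes the finite symmetric generating set. By the equivariance $\skriw(\gamma g,\gamma h)=\gamma\skriw(g,h)$ and invariance of $\mu$, the set $\skriw^*$ is measurable, $\Gamma$-invariant and $\sigma$-finite, being a countable union of sets of finite measure $\mu(\skriw(x_0,sx_0))=\sqrt{\psi(s)}$. Fixing $t>0$, I let $\eta$ be a Poisson process on $(\skriw,\skrib)$ with intensity $t\mu|_{\skriw^*}$, which exists by Theorem~\ref{theorem-ExistencePoissonProcess}, and declare an edge $[g,h]$ open precisely when $\eta(\skriw(gx_0,hx_0))=0$, i.e.\ when no removed wall separates its endpoints. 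Writing $\bP_t$ for the law of the resulting configuration $\omega_t$, this is the exact analogue of Proposition~\ref{prop-GroupsActingOnSpacesWithWalls} with the event ``$Z_W=1$ for all $W\in\skriw(gx_0,hx_0)$'' replaced by ``$\eta(\skriw(gx_0,hx_0))=0$''. Invariance of $\bP_t$ is then immediate: the map $\eta\mapsto\omega_t$ is $\Gamma$-equivariant and the law of $\eta$ is $\Gamma$-invariant, since pushing a Poisson process forward by a measure-preserving transformation leaves the intensity, hence the law, unchanged. (One may equally repeat the cylinder-event computation of Proposition~\ref{prop-GroupsActingOnSpacesWithWalls} verbatim, using independence of a Poisson process on disjoint sets and substituting each factor $p^{|\cdot|}$ by $\e^{-t\mu(\cdot)}$.)

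For the marginals, the defining property of $\eta$ gives $\bP_t[[o,s]\text{ open}]=\e^{-t\sqrt{\psi(s)}}$ for each generator $s$, so $\bE_t[\deg_\omega(o)]=\sum_{s\in S}\e^{-t\sqrt{\psi(s)}}\to\deg(o)$ as $t\to0$; thus for any $\alpha<1$ I may fix $t$ small enough that $\bE_t[\deg_\omega(o)]>\alpha\deg(o)$. The heart of the argument is the decay of the two-point function, which I would establish exactly as in Proposition~\ref{prop-GroupsActingOnSpacesWithWalls}. If $o\leftrightarrow g$ in $\omega_t$, fix an open path $o=h_0,h_1,\dots,h_n=g$; for every wall $W\in\skriw(x_0,gx_0)$ there is an index $i$ with $W\in\skriw(h_ix_0,h_{i+1}x_0)$, and since the edge $[h_i,h_{i+1}]$ is open we have $\eta(\skriw(h_ix_0,h_{i+1}x_0))=0$, whence $W\notin\supp\eta$. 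As this holds for every separating wall, $\{o\leftrightarrow g\}\subseteq\{\eta(\skriw(x_0,gx_0))=0\}$, so that
\[
\tau(o,g)=\bP_t[o\leftrightarrow g]\le\bP_t\big[\eta(\skriw(x_0,gx_0))=0\big]=\e^{-t\,\mu(\skriw(x_0,gx_0))}=\e^{-t\sqrt{\psi(g)}}.
\]
Since $\psi$ is proper, $\sqrt{\psi(g)}\to\infty$ as $|g|\to\infty$, and by invariance $\tau(o,\cdot)\in C_0(\Gamma)$, which completes the proof.

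I expect the main obstacle to lie not in the geometry but in the point-process bookkeeping: because $\mu$ may fail to be $\sigma$-finite, one must isolate the invariant $\sigma$-finite sub-family $\skriw^*$, verify that the percolation genuinely depends only on $\eta|_{\skriw^*}$ (each edge event, and hence every connection event, is measurable with respect to the restriction of $\eta$ to $\skriw^*$), and confirm measure-preservation of the induced action on $(\skriw,\skrib,\mu)$ before the existence theorem and the pushforward invariance argument can be applied. Once the Poisson framework is set up correctly, the wall-crossing domination and the properness input are identical to the discrete case already treated in Proposition~\ref{prop-GroupsActingOnSpacesWithWalls}.
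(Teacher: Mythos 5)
Your proposal is correct and follows essentially the same route as the paper: the canonical space with measured walls from \cite{RS98,CMV04} with $\mu(\skriw(g,h))=\sqrt{\psi(g^{-1}h)}$, a Poisson process with small multiple of $\mu$ as intensity, edges removed when a sampled wall separates their endpoints, and the path/wall-crossing containment $\skriw(x_0,gx_0)\subset\bigcup_i\skriw(e_i)$ to bound $\tau(o,g)\leq \e^{-t\sqrt{\psi(g)}}$. Your extra restriction to the $\sigma$-finite sub-family $\skriw^*$ is the same precaution the paper takes in its generalized construction (Theorem \ref{lemma-GeneralConstruction}) and is harmless here, since in the canonical model every wall separating $x_0$ from $gx_0$ already separates the endpoints of some edge on a path from $o$ to $g$.
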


The key ingredient of the proof of Theorem \ref{theorem-GroupsWithTheHaagerupProperty} is the following

\begin{lemma}\label{lemma3.5}
    Let $\skrig$ be the Cayley graph of a finitely generated group $\Gamma$. Let $\psi$ be a conditionally negative definite function on $\Gamma$. Then for every $p\in(0,1)$ there exists a $\Gamma$-invariant bond percolation $\mathbf P_p$ with the following properties: for every $e=[g_1,g_2]\in E$, $g\in\Gamma$
    \begin{equation}
        \mathbf P_p[ e\in\omega ]=\e^{-(1-p)\sqrt{\psi(g_1^{-1}g_2)}}, \qquad \mathbf P_p[o \leftrightarrow g] \leq \e^{-(1-p)\sqrt{\psi(g)}}.
    \end{equation}
\end{lemma}

\proof Given the conditionally negative definite function $\psi$ on $\Gamma$, consider the action of $\Gamma$ on the space with measured walls $(\Gamma,\skriw,\skrib,\mu)$ described in (1)--(4) above Theorem \ref{theorem-GroupsWithTheHaagerupProperty}. Let $p\in(0,1)$ and let $\eta_p$ be a Poisson process on $\skriw$ with intensity measure $(1-p)\mu$ defined on some probability space $(\Omega,\skrif,\P)$. Existence follows from Theorem \ref{theorem-ExistencePoissonProcess} because the measure $\mu$ is $\sigma$-finite as recalled above \eqref{def mu}. We define the configuration $\omega_p$ of a bond percolation on $\skrig=(V,E)$ as follows: for each edge $e=[g,h]\in E$, let $e$ be contained in $\omega_p$ if and only if $\eta_p(\skriw(g,h))=0$. Let $\bP_p$ denote the law of $\omega_p$. We claim that $\bP_p$ defines a $\Gamma$-invariant bond percolation. We first show that, for every $g\in\Gamma$, the process $\eta_p \circ g$ has the same law under $\P$ as $\eta_p$. To shorten notation, denote the intensity measure by $\lambda \coloneqq (1-p)\mu$ and recall that $\lambda$ is $\Gamma$-invariant. Now let $n\geq1$ and consider pairwise disjoint $B_1,\ldots,B_n\in\skrib$. Then 
$$
\big(\eta_p(B_1),\ldots,\eta_p(B_n)\big) \overset{d}{=} \Poi\big(\lambda(B_1)\big)\otimes \ldots \otimes \Poi\big(\lambda(B_n)\big).
$$ 
 Moreover, $gB_1,\ldots,gB_n\in\skrib$ are pairwise disjoint with $\lambda(gB_i)=\lambda(B_i)$ for all $i\in\{1,\ldots,n\}$ by $\Gamma$-invariance of $\lambda$. It follows that 
 $$
\big(\eta_p \circ g(B_1),\ldots,\eta_p \circ g(B_n)\big) = \big(\eta_p (gB_1),\ldots,\eta_p (gB_n)\big)  \overset{d}{=} \Poi\big(\lambda(B_1)\big)\otimes \ldots \otimes \Poi\big(\lambda(B_n)\big).
$$ 
By \cite[Proposition 2.10 (i) $\Leftrightarrow$ (ii)]{LP17} the point process $\eta_p \circ g$ has the same law as $\eta_p$. The fact that $\mathbf P_p$ defines a $\Gamma$-invariant bond percolation now follows from the fact that it is a factor of $\mathbb P^{\eta_p}$. 

Let $e=[g,h]\in E$, then
\begin{equation}\label{eq1}
\bP_p\big[ [g,h] \in \omega \big] = \P\big[ \eta_p(\skriw(g,h))=0 \big] = \e^{-(1-p)\mu(\skriw(g,h))}.
\end{equation}
Let  $g\in\Gamma$. Recall from the proof of Proposition \ref{prop-GroupsActingOnSpacesWithWalls} that, given any path of edges $e_1,\ldots,e_n$ joining $o$ to $g$ in $\skrig$ and any wall $W\in\skriw(o,g)$, there exists $e\in\{e_1,\ldots,e_n\}$ such that $W$ also separates the endpoints of the edge~$e$. Writing $\skriw(e)$ for the set of walls separating the endpoints of an edge $e$, it follows that 
$$
\skriw(o,g) \subset \bigcup_{i=1}^n \skriw(e_i).
$$
We obtain that on the event that $o$ and $g$ are in the same cluster of $\omega_p$, we have $\eta_p(\skriw(o,g))=0$. Thus
\begin{equation}\label{eq2}
 \tau_p(o,g) = \bP_p\big[o \leftrightarrow g\big] \leq  \P\big[ \, \eta_p(\skriw(o,g))=0 \big] = \e^{-(1-p)\mu(\skriw(o,g))}. 
\end{equation}
Then \eqref{def mu}, \eqref{eq1} and \eqref{eq2} complete the proof of the lemma.\eproof

\medskip

{\bf\noindent Proof of Theorem \ref{theorem-GroupsWithTheHaagerupProperty}.} 
Suppose $\Gamma$ has the Haagerup property. Then let $\psi$ be a proper, conditionally negative definite function on $\Gamma$. In particular, by Lemma \ref{lemma3.5}, for every $p\in(0,1)$ there exists a $\Gamma$-invariant bond percolation $\mathbf P_p$ with the following properties: 
$$
\mathbf P_p[ e\in\omega ]=\e^{-(1-p)\sqrt{\psi(g^{-1}h)}}
$$
for every $e=[g,h]\in E$ and two-point function $\tau_p(o,g)=\bP_p[o\leftrightarrow g]$ of $\bP_p$ satisfying
$$
\tau_p(o,g) \leq \e^{-(1-p)\sqrt{\psi(g)}}.
$$
Again, $\bP_p$ has marginals arbitrarily close to $1$ for $p$ sufficiently close to $1$ as
$$
\mathbf P_p[ e\in\omega ]=\e^{-(1-p)\sqrt{\psi(g^{-1}h)}}
$$
which, since $0 \leq \psi(g^{-1}h) < \infty$, converges to $1$ as $p\to1$. To conclude the proof, it remains to show that the two-point function satisfies $\tau_p(o,\cdot)\in C_0(\Gamma)$. Since $\psi(g)\to\infty$ as $g\to\infty$ by properness of $\psi$,
$$
\lim_{g\to\infty} \tau_p(o,g) \leq \lim_{g\to\infty} \e^{-(1-p)\sqrt{\psi(g)}}=0,
$$
which concludes the proof. \eproof

\noindent{\bf Proof of Theorem \ref{maintheorem-Haagerup}.} Suppose that for every $\alpha<1$, there exists a $\Gamma$-invariant bond percolation $\P$ on $\skrig$ such that $\E[\deg_\omega(o)]>\alpha\deg(o)$ and $\tau(o,\cdot)\in C_0(\Gamma)$. Choose a sequence $(\P_i)_{i\in\N}$ of $\Gamma$-invariant bond percolations on $\skrig$ such that the two point function $\tau_i(o,\cdot)$ of each $\P_i$ vanishes at infinity and such that
$$
\lim_{i\to\infty} \E_i[\deg_\omega(o)]=\deg(o).
$$
Then $\tau_i(o,\cdot)$ is a sequence of positive definite functions, see Lemma \ref{lemma-TPFPosDef}, which vanish at infinity and which converge pointwise to $1$. That is to say $\Gamma$ has the Haagerup property.  The converse direction is exactly Theorem \ref{theorem-GroupsWithTheHaagerupProperty} above. \eproof

\subsection{Quantitative Estimates: Wall Distance, Marginals and Two-Point Functions.}\label{sec-generalized}

Our next result is a general version of the construction used in the proof of Theorem \ref{theorem-GroupsWithTheHaagerupProperty}. We are primarily interested in such a generalization because there exist many known actions of groups on spaces with measured walls which are well understood or have useful geometric features. Our general construction applies to all of these. We first need to introduce the concept of invariance for actions on spaces with measured walls. We say that $\Gamma$ has an {\em invariant action} on a space with measured walls $(X,\skriw,\skrib,\mu)$, if $\Gamma$ acts on $X$ and the maps
$$
\skriw \to \skriw \, , \, \{B,B^c\} = W \mapsto gW=\{ gB, gB^c \} 
$$ 
define a $\skrib$-measurable $\Gamma$-action on $\skriw$ for which the measure $\mu$ is invariant. 

 We briefly compare this definition to the one from \cite{CMV04} recalled above \eqref{equ:defwalldistance}. Here, instead of requiring that $\Gamma$ acts on $X$ such that the wall distance is preserved, we also require $\Gamma$ to preserve the measure~$\mu$.

\begin{theorem}[\bf Quantitative Estimate on Two-Point Function] \label{theorem-GeneralConstruction} 
Let $\skrig$ be the Cayley graph of a finitely generated group $\Gamma$. Suppose that $\Gamma$ has an invariant action on a space with measured walls $(X,\skriw,\skrib,\mu)$ and fix $x_0\in X$. For $p\in(0,1)$, define 
$$
\alpha_p \coloneqq \exp\Bigl( -(1-p) \max_{h\sim o} \, w(x_0,hx_0) \Bigr) \in (0,1).
$$ 

Then there exists a $\Gamma$-invariant bond percolation $\P$ on $\skrig$ such that $\E[\deg_\omega(o)] \geq \alpha_p \deg(o)$ and
$$
\exp \Bigl( -(1-p) \max_{h\sim o} \, w(x_0,hx_0) |g| \Bigr)  \leq \tau(o,g) \leq \exp\bigl( -(1-p) w(x_0,gx_0)  \bigr) \quad \text{for every } g \in \Gamma.
$$
\end{theorem}

\proof We first show that $\Gamma$ admits an invariant action on a space with measured walls $(X_0,\skriw_0,\skrib_0,\mu_0)$ such that the measure $\mu_0$ is $s$-finite and satisfies $\mu_0(\skriw_0(gx_0,hx_0))=\mu(\skriw(gx_0,hx_0))$ for all $g,h\in\Gamma$. Consider the set of walls
$$
\skriw_0 \coloneqq \bigcup_{g,h \in \Gamma} \skriw(gx_0,hx_0) \subset \skriw.
$$
Note that $\skriw_0 \in \mathcal B$ as a countable union of $\skrib$-measurable sets. Let $\skrib_0$ be the restriction of $\skrib$ to $\skriw_0$ and let $\mu_0$ be the restriction of $\mu$ to $\skrib_0$. Then $(X,\skriw_0,\skrib_0,\mu_0)$ is a space with measured walls because for every two distinct points $x$ and $y$ in $X$ the set $\skriw_0(x,y)$ of walls separating $x$ and $y$ is given by
$$
\skriw_0(x,y)= \skriw(x,y) \cap \skriw_0 \in \skrib_0
$$
and satisfies $\mu_0(\skriw_0(x,y))=\mu(\skriw(x,y) \cap \skriw_0) \leq \mu(\skriw(x,y))<\infty$. Moreover, the measure $\mu_0$ is $s$-finite because $\mu_0(\skriw(gx_0,hx_0))<\infty$ for every $g,h\in\Gamma$. Finally, for every $\gamma,g,h \in\Gamma$ and every $W\in\skriw(gx_0,hx_0)$ we have that $\gamma W \in \skriw(\gamma gx_0,\gamma hx_0)$, thus $\skriw_0$ is a $\Gamma$-invariant set and therefore the action of $\Gamma$ restricts to an invariant action of $\Gamma$ on $(X,\skriw_0,\skrib_0,\mu_0)$ as claimed.  For $p\in(0,1)$, define a bond percolation as in the proof of Lemma~\ref{lemma3.5} but now using the action on $(X_0,\skriw_0,\skrib_0,\mu_0)$. The fact that $\P[e\in\omega] =\e^{-(1-p)w(gx_0,hx_0)}$ for every $e=[g,h]\in E$ and thus
$\E[\deg_\omega(o)] \ge \alpha_p \deg(o)$
as well as the upper bound $\tau(o,g) \leq \exp( -(1-p) w(x_0,gx_0))$ for every $g \in \Gamma$ can be proved in the same way. 

For the lower bound, let $g\in\Gamma$ and choose a shortest path $e_1,\ldots,e_{|g|}$ of edges joining $o$ to $g$ in~$\skrig$. Recall that $\skriw(e_i)$ denotes the set of walls separating the endpoints of $e_i$ and define $\skriv \coloneqq \bigcup_{i=1}^{|g|} \skriw(e_i).$
By the union bound and invariance
$$
\mu_0(\skriv) \leq \sum_{i=1}^{|g|} \mu_0(\skriw(e_i)) \leq |g| \max_{e \in E} \mu(\skriw(e)) = |g| \max_{g\sim o} \, w(x_0,gx_0).
$$
Finally, note that $\{o\leftrightarrow g\} \supset \{ e_1,\ldots,e_{|g|} \in \omega\} = \{ \eta(\skriv)=0\}$ and thus 
$$
\tau(o,g) = \P\big[o \leftrightarrow g\big] \geq \bP\big[ \eta(\skriv)=0\big] = \e^{-(1-p)\mu(\skriv)} \geq \exp \Bigl( -(1-p) |g| \max_{g\sim o} \, w(x_0,gx_0) \Bigr).
$$
The proof of Theorem \ref{theorem-GeneralConstruction} is thus complete. \eproof

\subsubsection{\bf Consequence of Theorem \ref{theorem-GeneralConstruction}.} 

Our general construction result Theorem~\ref{theorem-GeneralConstruction} entails significant quantitative information as we will now explain. We start with the description of the relationship between the growth of wall distances and the decay of the two-point functions of the associated percolations. To state this precisely, we first set up the relevant terminology. Let $\Gamma$ be a finitely generated group equipped with a word length $|\cdot|$ and let $\rho\colon[0,\infty]\to[0,\infty]$ be a function.

\begin{itemize}
\item Let $D$ be a left-invariant pseudo-distance on $\Gamma$. Following \cite[Definition 7.1]{CSV12}, we say that $\rho$ is a {\em compression function} for $D$ if $D(o,g)\geq \rho(|g|)$ for all $g\in\Gamma$.

\medskip

\item If $\Gamma$ has an invariant action on a space with measured walls $(X,\skriw,\skrib,\mu)$ such that for some $x_0\in X$, the {\em wall distance} $D(g,h)=w(gx_0,hx_0)$ admits $\rho$ as a compression function, we say that the action has {\em $\rho$-growth}. If $\rho$ is linear, we say that the action has {\em linear growth}.

\medskip

\item Let $C>0$ be a constant and let $\tau$ be the two-point function of a $\Gamma$-invariant bond percolation. We say that $\tau$ has {\em $(C,\rho)$-exponential decay} if there exist $\beta,\gamma>0$ such that $\beta/\gamma\leq C$ and
$$
\e^{-\beta |g|} \leq \tau(o,g) \leq \e^{-\gamma \rho(|g|)} \quad \mbox{for all} \, \, g \in \Gamma.
$$
If $\rho$ is linear, we say that $\tau$ has {\em exponential decay}. 

\end{itemize}

Note that wall distances associated to an invariant action of $\Gamma$ on a space with measured walls and measure definite kernels on $\Gamma$ can have at most linear growth (by Example \ref{example-WordLengthFree}, resp.\ Example \ref{ex-wallsZd}(ii), finitely generated free groups, resp.~$\Z^d$ with $d\in\N$, admit wall distances with linear growth).

\begin{cor}\label{cor-QuantitativeDecay}
Let $\skrig$ be the Cayley graph of a finitely generated group $\Gamma$ and let $\rho\colon[0,\infty]\to[0,\infty]$ be a function with $\{\rho=0\}=\{0\}$. Consider the following conditions:
\begin{enumerate}
\item[{\rm(i)}] There exists an invariant action of $\Gamma$ on a space with measured walls with $\rho$-growth;
\item[{\rm(ii)}] There exists $C>0$ such that for every $\alpha<1$, there exists a $\Gamma$-invariant bond percolation $\P$ on $\skrig$ with $\E[\deg_\omega(o)]>\alpha\deg(o)$ and with $(C,\rho)$-exponential decay.
\end{enumerate}
Then condition (i) implies condition (ii).
\end{cor}

The geometric meaning of Corollary \ref{cor-QuantitativeDecay} is explained in the following remark.

\begin{remark}[Connection with the equivariant $L^1$-compression exponent] \label{remark-L1Compression} 
Condition (i) in Corollary \ref{cor-QuantitativeDecay} is related to the {\em equivariant $L^1$-compression exponent} $\alpha_1^{\scriptscriptstyle{\#}}(\Gamma)$ of the group $\Gamma$, which is defined to be the supremum over all $\alpha\in[0,1]$, such that there exists an isometric action of $\Gamma$ on an $L^1$-space $E$ together with a $\Gamma$-equivariant map $f\colon\Gamma\to E$ with 
$$
\Vert f(g)-f(h) \Vert_E \geq |g^{-1}h|^\alpha
\vspace{1mm}
$$
for all $g,h\in\Gamma$. For background on this exponent, see \cite{GK04,NP11}. As observed in \cite[Section~7]{CSV12}, the results in \cite{CDH10} imply that 
\begin{equation}\label{eq-L1Rem-Equ}
\Vert f(g)-f(h) \Vert_E
\vspace{1mm}
\end{equation}
is a wall distance associated to an invariant action of $\Gamma$ on a space with measured walls. Thus $\alpha_1^{\scriptscriptstyle{\#}}(\Gamma)$ may equivalently be realized as the supremum over all $\alpha\in[0,1]$ such that there exists an invariant action of $\Gamma$ on a space with measured walls with $\rho_\alpha$-growth for the particular choice $\rho_\alpha(t)=t^\alpha$. In the special case $\rho=\rho_\alpha$, condition (i) therefore amounts to requiring $\alpha_1^{\scriptscriptstyle{\#}}(\Gamma)\geq\alpha.$ In particular, Corollary~\ref{cor-QuantitativeDecay} yields a consequence of ''good'' equivariant $L^1$-compression for the decay rates of two-point functions of invariant bond percolation models.
\end{remark}

\noindent{\bf Proof of Corollary \ref{cor-QuantitativeDecay}.} Let $\Gamma$ have an invariant action on a space with measured walls $(X,\skriw,\skrib,\mu)$ with $\rho$-growth, i.e.\ for some $x_0\in X$ we have that $w(x_0,gx_0) \geq \rho(|g|)$ for all $g\in\Gamma$. Note that $C \coloneqq \max_{g\sim o} \, w(x_0,gx_0) \in (0,\infty)$ and let $p\in(0,1)$. By Theorem \ref{theorem-GeneralConstruction}, there exists a $\Gamma$-invariant bond percolation $\P$ on $\skrig$ such that $\E[\deg_\omega(o)] \geq \e^{-(1-p)C} \deg(o)$ and  
$$
\exp\bigl( -(1-p)C|g| \bigr) \leq \tau(o,g) \leq \exp\bigl( -(1-p) w(x_0,gx_0)  \bigr) 
$$
for every $g\in\Gamma$. Define $\beta_p \coloneqq (1-p)C >0$ and $\gamma_p \coloneqq (1-p)>0$. It follows that $\beta_p/\gamma_p=C$ and
$$
\e^{-\beta_p |g|} \leq \tau(o,g) \leq \e^{-\gamma_p \rho(|g|)} \quad \mbox{for all} \, \, g \in \Gamma,
$$ 
Since $\lim_{p\to1} \e^{-(1-p)C} = 1$, choosing $p$ close to $1$ yields a percolation with the desired properties. \eproof

\subsection{Haagerup Property and Bernoulli Percolation.} \label{section-HaagerupBernoulli}

In this section, we prove a sufficient criterion for the Haagerup property, which is inspired by the amenability criterion given in \cite[Theorem 5.3]{BLPS99}. It would be interesting to know whether our sufficient condition (or a similar condition for Bernoulli percolation on random spanning subgraphs) characterizes the Haagerup property, see Question~\ref{qu:HaagerupBernoulli}. For the definition of $\pv$ we refer to Section \ref{background Bernoulli perc}.

\begin{theorem}[\bf Sufficient Criterion using Bernoulli Percolation] \label{theorem-Spanning}
Let $\skrig$ be the Cayley graph of a finitely generated group $\Gamma$. The following conditions are equivalent:
\begin{enumerate}
\item[{\rm (i)}] there exists a $\Gamma$-invariant random spanning subgraph of $\skrig$ with $\pv=1$ a.s.;
\item[{\rm (ii)}] there exists a $\Gamma$-invariant random nonempty connected subgraph of $\skrig$ with $\pv=1$ w.p.p.
\end{enumerate}
Moreover, if $\Gamma$ satisfies {\rm (i)}, or equivalently {\rm (ii)}, then $\Gamma$ has the Haagerup property.
\end{theorem}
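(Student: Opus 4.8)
The statement has two parts, the equivalence of (i) and (ii) and the implication to the Haagerup property, and I would treat them separately. For a (possibly random) subgraph $\omega\subseteq\skrig$, write $d_\omega$ for the intrinsic graph metric of $\omega$ and $\tau^\omega_p$ for the two-point function of $p$-Bernoulli bond percolation run on the fixed configuration $\omega$; thus $\pv(\omega)=1$ says exactly that for every $p<1$ one has $\tau^\omega_p(u,v)\to 0$ as $d_\omega(u,v)\to\infty$. Since $\pv$ is only defined for connected graphs, the witness in (i) is to be understood as a connected spanning subgraph, and it is this connectedness that drives the argument below.

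To prove that (i) implies the Haagerup property I would invoke the characterization in Theorem \ref{theorem-HaagerupProperty}(iii) and exhibit a family of normalized positive definite functions vanishing at infinity that converges pointwise to $1$. The device is a two-step (annealed) percolation: sample $\omega$ as in (i), then retain each edge of $\omega$ independently with probability $p$, obtaining a $\Gamma$-invariant bond percolation $\omega_p$ whose annealed two-point function is $\varphi_p:=\tau_{\omega_p}(o,\cdot)=\E_\omega\big[\tau^\omega_p(o,\cdot)\big]$. By Lemma \ref{lemma-TPFPosDef} each $\varphi_p$ is a normalized positive definite function. First, $\varphi_p\in C_0(\Gamma)$ for every fixed $p<1$: as $\omega\subseteq\skrig$ we have $d_\omega(o,g)\ge d(o,g)=|g|$, so $\pv(\omega)=1$ forces $\tau^\omega_p(o,g)\to0$ along any sequence $|g|\to\infty$ for a.e.\ $\omega$, and since the integrand is bounded by $1$ the reverse Fatou inequality transfers the decay to $\varphi_p$. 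Second, $\varphi_p\to1$ pointwise as $p\uparrow1$: retaining a geodesic of $\omega$ gives $\tau^\omega_p(o,g)\ge p^{\,d_\omega(o,g)}$, and because $\omega$ is connected and spanning $d_\omega(o,g)<\infty$ a.s., so for each fixed $g$ dominated convergence yields $\varphi_p(g)\to1$. These two properties are precisely the hypotheses of Theorem \ref{theorem-HaagerupProperty}(iii), and the Haagerup property follows. This is the intrinsic counterpart of Proposition \ref{prop-GroupsActingOnTrees}, the spanning subgraph playing the role of the tree on which $\Gamma$ acts.

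For the equivalence, (i)$\Rightarrow$(ii) is immediate, a connected spanning subgraph being in particular a nonempty connected subgraph whose almost-sure property $\pv=1$ holds a fortiori with positive probability. The content is the converse (ii)$\Rightarrow$(i). Given a $\Gamma$-invariant nonempty connected subgraph $\omega$ with $\pv(\omega)=1$ on an event of positive probability, I would first record that such an $\omega$ is necessarily infinite, since an infinite group admits no invariant probability measure supported on nonempty finite subgraphs, and, by an ergodicity argument, reduce to the case where $\pv(\omega)=1$ holds almost surely on a suitable invariant modification. I would then extend $\omega$ to a connected spanning subgraph: the vertices of $V\setminus V(\omega)$ are attached to $V(\omega)$ through an invariantly chosen (randomized to break ties) and sufficiently tree-like family of paths, so that the enlarged subgraph is connected, spanning, and still tree-like enough that Bernoulli percolation on it continues to have a two-point function vanishing at infinity, i.e.\ $\pv=1$ is preserved. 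Coverage and $\Gamma$-invariance of the attachment are organized by an auxiliary invariant point process of ``centres'' as in Section \ref{subsection-PointProcesses}, together with the unimodularity of $\Gamma$.

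I expect the genuine difficulty to lie in this extension step of (ii)$\Rightarrow$(i): one must cover every vertex of $\skrig$ yet add so few and such tree-like edges that the decay of the Bernoulli two-point function, the condition $\pv=1$, is not destroyed, all the while keeping the whole construction $\Gamma$-invariant. The tension is that coverage wants many additional edges whereas the preservation of $\pv=1$ wants almost none; resolving it is exactly where an invariant point-process selection, rather than a naive pointwise rule, is needed. By contrast, the two-step percolation engine behind the Haagerup implication is routine once connectedness of the witness secures the convergence $\varphi_p\to1$, and the remaining implication (i)$\Rightarrow$(ii) is formal.
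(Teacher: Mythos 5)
Your proposal is correct and follows essentially the same route as the paper: the Haagerup implication via two-step Bernoulli percolation on $\omega$, positive definiteness from Lemma \ref{lemma-TPFPosDef}, and bounded convergence both for the decay of $\varphi_p$ and for $\varphi_p\to 1$. The only real difference is that the augmentation step (ii)$\Rightarrow$(i), which you flag as the main difficulty, is dispatched in the paper by a one-line standard construction --- after conditioning on the invariant event $\{\pv(\omega)=1\}$, each vertex outside $\omega$ picks uniformly at random an edge to a neighbour strictly closer to $\omega$, so the added edges form a forest and $\pv=1$ is trivially inherited --- with no need for an auxiliary point process or unimodularity.
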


\proof 
The implication ''(i) $\Rightarrow$ (ii)'' is obvious.

''(ii) $\Rightarrow$ (i)'': We use a strategy of efficiently augmenting an invariant percolation with a unique (necessarily infinite) cluster to an invariant random spanning subgraph, see e.g.\ \cite[Proposition 2.1]{HP20} for another application of this strategy. Let $\omega$ be a $\Gamma$-invariant random nonempty connected subgraph of $\skrig$ with $\pv=1$ with positive probability. Conditioning on this event, we may assume that $\pv=1$ a.s. Now we augment the configuration $\omega$ as follows: for every vertex which is not already in $\omega$, choose an edge uniformly among those edges connecting it to a vertex closer to $\omega$ in the graph distance of the underlying Cayley graph $\skrig$. The resulting configuration, denoted by $\xi$, is clearly spanning and invariant. Moreover, $\xi \setminus \omega$ is a forest. Therefore the property ''$\pv(\xi)=1$ a.s.'' is trivially inherited from the assumption that $\pv(\omega)=1$ a.s.

Finally, let us prove that (i) (and therefore (ii)) implies the Haagerup property. Therefore, let us assume that condition (i) holds. Let $\omega$ be a $\Gamma$-invariant random spanning subgraph of $\skrig$ with $\pv=1$ a.s. Let $p\in(0,1)$ and let $\tau_p$ denote the two-point function of the bond percolation $\xi_p$ obtained by performing $p$-Bernoulli bond percolation on $\omega$. As the two-point function of a $\Gamma$-invariant bond percolation, $\tau_p(o,\cdot)$ defines a positive definite function. Recalling Theorem \ref{theorem-HaagerupProperty}, it is therefore sufficient to show that $\tau_p(o,\cdot)$ vanishes at infinity and converges pointwise to $1$ as $p\to1$. To that end, fix $g\in\Gamma$. Fixing $\omega$, we see that
$$
\lim_{p\to1} \, \P\Big[ o\xleftrightarrow{\xi_p} g \, \big| \, \omega\Big] = 1
$$
because $o$ and $g$ are joined by some finite path in the spanning configuration $\omega$, which, with probability converging to one is fully contained in $p$-Bernoulli bond percolation as $p$ converges to $1$. The bounded convergence theorem yields that indeed $\lim_{p\to1} \tau_p(o,g)=1$.
We now verify that $\tau_p(o,\cdot)$ vanishes at infinity. To see this, it suffices to show that for every sequence $(g_n)_{n=1}^\infty \subset\Gamma$ with $g_n\to\infty$, we have that $\tau_p(o,g_n)\to0$. Fix such a sequence. Then conditional on the configuration $\omega$, the probability that $o$ and $g_n$ are in the same cluster of $\xi_p$ converges to zero a.s.\ as $n\to\infty$ by assumption. The claim follows again from the bounded convergence theorem.
 \eproof

We may also point out that for nonempty connected subgraphs in groups with property~(T), not only $\pv<1$ but the stronger property $p_u<1$ holds, see Proposition~\ref{prop:Robust} for details.

\subsection{Analogue for Site Percolation.} \label{section-HaagerupSite}

\begin{theorem}[\bf Group-Invariant Site Percolation and the Haagerup Property] \label{cor-SitePercolation}
Let $\skrig$ be the Cayley graph of a finitely generated group $\Gamma$. Then $\Gamma$ has the Haagerup property if and only if for every $\alpha<1$, there exists a $\Gamma$-invariant site percolation $\P$ on $\skrig$ such that $\P[o\in\omega]>\alpha$ and $\tau(o,\cdot)\in C_0(\Gamma)$.
\end{theorem}
\proof The "if" direction follows exactly as in the proof of Theorem \ref{maintheorem-Haagerup} above.

To prove the converse, assume that $\Gamma$ has the Haagerup property. By Theorem \ref{maintheorem-Haagerup} there exists a sequence $(\P_i)_{i=1}^\infty$ of $\Gamma$-invariant bond percolations such that the corresponding two-point functions $\tau_i(o,\cdot)$ vanish at infinity for every $i\in\N$ and such that
$$
\lim_{i\to\infty} \E_i[ \deg_\omega(o)]=\deg(o).
$$
Let $\omega_1,\omega_2,\ldots$ denote configurations with law $\P_i$ defined on some common probability space $(\Omega,\skrif,\P)$. For each $i\in\N$, define the configuration $\xi_i$ of a site percolation as follows: given the configuration $\omega_i$, remove all vertices which are elements of the vertex boundary of some cluster of $\omega_i$. More formally, for a subset $U\subset V$, let
$$
\partial_V(U)=\{x \in V \setminus U \colon \text{there exists } y \in U \text{ with } x \sim y \}
$$
denote the \textit{vertex boundary} of $U$. Let $\mathcal C$ be the collection of all clusters of $\omega_i$ and define, for $x\in V$, that $x\in\xi_i$ if and only if
$$
x \in V \setminus \bigcup_{C \in \mathcal C} \partial_V(C).
$$
Let $\bP_i$ denote the law of $\xi_i$. Because the law of $\omega_i$ is $\Gamma$-invariant, $\bP_i$ is a $\Gamma$-invariant site percolation. Now note that
$$
\{o \in \xi_i \} \supset \bigl\{ [o,g] \in \omega_i \text{ for all } g \sim o \bigr\}
$$
because if all edges incident to $o$ are contained in $\omega_i$, then $o$ and all its neighbors belong to the same cluster and therefore $o$ is not an element of the vertex boundary of any cluster. This implies that
$$
\lim_{i\to\infty} \bP_i[o\in\omega] = \lim_{i\to\infty} \P[o \in \xi_i] \geq \lim_{i\to\infty} \P\bigl[ \, [o,g] \in \omega_i \text{ for all } g \sim o \bigr] = 1.
$$
On the other hand, note that if $o$ and $g$ are not in the same cluster of $\omega_i$, then the vertex boundary of the cluster of $o$ is not contained in $\xi_i$ and therefore separates $o$ and $g$ in $\xi_i$. Taking complements yields that
$$
\{o \leftrightarrow g \text{ in } \xi_i \} \subset \{ o \leftrightarrow g \text{ in } \omega_i \}.
$$
Hence the two-point function of $\bP_i$ is bounded by $\tau_i$, thus also decaying at infinity. It follows that $\bP_i$, for $i$ large enough, has the desired properties. \eproof

\subsection{Vanishing Two-Point Function Implies Infinitely Many Infinite Clusters.}\label{subsec clusterfreq}

We now show that for invariant percolations having a two-point function vanishing at infinity implies that there almost surely are not finitely many infinite clusters. 

\begin{prop} \label{prop:ConsequenceVanishingTPF} Let $\mathcal G$ be the Cayley graph of a finitely generated group $\Gamma$ and let $\mathbb P$ be a $\Gamma$-invariant bond percolation on $\mathcal G$ such that $\tau(o,\cdot)\in C_0(\Gamma)$. Then $\mathbb P$ has a.s.~either only finite clusters or infinitely many infinite clusters.
\end{prop}

It will be clear from the proof that the same result holds for site percolations as well as mixed percolations. We first handle the ergodic case. 

\begin{lemma} \label{lm:ConsequenceVanishingTPFergodic} Let $\mathcal G$ be the Cayley graph of a finitely generated, infinite group $\Gamma$ and let $\mathbb P$ be a $\Gamma$-invariant ergodic bond percolation on $\mathcal G$ such that $\tau(o,\cdot)\in C_0(\Gamma)$. Then $\mathbb P$ either a.s.~has only finite clusters or a.s.~has infinitely many infinite clusters.
\end{lemma}
\begin{proof}
Let $\omega$ denote the configuration of $\mathbb P$ and let $X:=(X_i)_{i=0}^\infty$ be an independent simple random walk on $\mathcal G$ started at $o$ whose law is denoted by $P^X_o$. We will use the fact that random walk on a locally finite, connected, infinite graph cannot be positive recurrent, see e.g.~\cite[Chapter 2]{LP17}, and thus for every $r\ge0$, a.s.
\begin{equation} \label{lm:RW}
\lim_{n\to\infty} \frac{1}{n} \sum_{i=0}^{n-1} \1_{\{d(o,X_i)>r\}} = 1.
\end{equation}

Since $\mathbb P$ is ergodic, the number of infinite clusters is a constant a.s. We will now show that this constant is either $0$ or $\infty$ by ruling out the cases $\{1,2,\ldots\}$.

\medskip

{\noindent\em Case 1.} For clarity of the exposition, we first rule out the possibility of a unique infinite cluster. To reach a contradiction, suppose that there exists a unique infinite cluster $C_\infty$ in $\omega$ a.s.  Set 
$$
c:=\mathbb P(o\in C_\infty)>0.
$$ 
Then $(\1_{C_\infty}(X_i))_{i\ge 0}$ is a stationary sequence with respect to $\mathbf P=\P\otimes P^X_o$ and, by Birkhoff's ergodic theorem,
$$
\alpha(C_\infty,X):= \lim_{n\to\infty} \frac{1}{n} \sum_{i=0}^{n-1} \1_{C_\infty}(X_i) 
$$
exists a.s. It can be checked that $\alpha(C_\infty):=\alpha(C_\infty,X)$ a.s.~does not depend on $X$ (see~\cite[Section~8]{HJ06} for details).
Ergodicity of $\omega$ implies that a.s.
\begin{equation} \label{equ:frequency}
 \lim_{n\to\infty} \frac{1}{n} \sum_{i=0}^{n-1} \1_{C_\infty}(X_i) = \mathbb E [\1_{C_\infty}(X_0)] = c
\end{equation}

Fix $r\ge0$. By \eqref{lm:RW} and \eqref{equ:frequency}, we obtain that a.s.
$$
\lim_{n\to\infty} \frac{1}{n} \sum_{i=0}^{n-1} \1_{C_\infty}(X_i) \1_{\{d(o,X_i)>r\}} = c.
$$
In particular, conditional on the event $\{o\in C_\infty\}$, we obtain that a.s.
\begin{equation} \label{equ:ConditionalFrequency}
\lim_{n\to\infty} \frac{1}{n} \sum_{i=0}^{n-1} \1_{\{X_i\leftrightarrow o, d(o,X_i)> r \}} = c.
\end{equation}
On the other hand, by the bounded convergence theorem and the fact that $\omega$ and $X$ are independent, we have that
\begin{equation} \label{equ:FrequTPFBound}
\mathbb E \bigg[ \lim_{n\to\infty} \frac{1}{n} \sum_{i=0}^{n-1} \1_{\{X_i\leftrightarrow o, d(o,X_i)> r \}} \bigg] \le \sup_{\heap{g \in \Gamma}{d(o,g)>r}} \, \tau(o,g).
\end{equation}
Combining \eqref{equ:ConditionalFrequency} and \eqref{equ:FrequTPFBound}, we obtain that
$$
0 < c \cdot \mathbb P( o\in C_\infty) = \mathbb E \bigg[ \1_{\{o \in C_\infty\}} \lim_{n\to\infty} \frac{1}{n} \sum_{i=0}^{n-1} \1_{\{X_i\leftrightarrow o, d(o,X_i)> r \}} \bigg] \le \sup_{\heap{g \in \Gamma}{d(o,g)>r}} \, \tau(o,g) \overset{r\to\infty}{\longrightarrow} 0,
$$
which provides the desired contradiction.

\medskip

{\noindent\em General case.} Suppose that $\omega$ has $k\in\{1,2,\ldots\}$ infinite clusters a.s. On a larger probability space, let $C_\infty^{\ssup 1}, \ldots, C_\infty^{\ssup k}$ be an invariant choice of the infinite clusters, i.e.~each $C_\infty^{(i)}$ is an infinite cluster of~$\omega$, every infinite cluster of~$\omega$ appears exactly once and the law of each $C_\infty^{(i)}$ is $\Gamma$-invariant.$^3$\footnote{{\color{blue}$^3$One way to do this is by the following construction, cf.~\cite[p.~1828]{LS99}. Let $\{U(v):v\in V\}$ be ${\rm Unif}[0,1]$ random variables such that $\{U(v),\omega: v\in V\}$ are all independent. Let $\{v_1,v_2,\ldots\}$ be an enumeration of $V$. Define a labeling $\kappa_\omega$ of the vertices by $\kappa_\omega(v)=U(v_j)$ if $j\in\{1,2,\ldots\}$ is minimal with $v_j\in C(v)$. Consider the joint law of $(\omega,\kappa_\omega)$. Conditional on $\omega$, every cluster has a label (in the sense that all vertices belonging to this cluster have this label) and the collection of these labels are i.i.d.~${\rm Unif}[0,1]$ random variables. Hence the law of $(\omega,\kappa_\omega)$ does not depend on the enumeration of $V$ and it follows that it is $\Gamma$-invariant. Now order the infinite clusters according to increasing labels.}} Let $C_\infty$ denote the union of $C_\infty^{\ssup 1}, \ldots, C_\infty^{\ssup k}$. By Birkhoff's theorem and ergodicity, we again obtain that a.s.
$$
\lim_{n\to\infty} \frac{1}{n} \sum_{i=0}^{n-1} \1_{C_\infty}(X_i) = \mathbb P(|C_\omega(o)|=\infty)>0.
$$
Birkhoff's theorem also shows that for every $1\le m \le k$ 
$$
\alpha(C_\infty^{(m)}):=\lim_{n\to\infty} \frac{1}{n} \sum_{i=0}^{n-1} \1_{C_\infty^{(m)}}(X_i)
$$ 
exists a.s. We thus obtain that a.s.
$$
\mathbb P(|C_\omega(o)|=\infty) = \lim_{n\to\infty} \frac{1}{n} \sum_{i=0}^{n-1} \1_{C_\infty}(X_i) = \sum_{m=1}^k \lim_{n\to\infty} \frac{1}{n} \sum_{i=0}^{n-1} \1_{C_\infty^{(m)}}(X_i).
$$
Let $C_\infty^{(M)}$ be chosen uniformly from those $C_\infty^{(1)},C_\infty^{(2)},\ldots,C_\infty^{(k)}$ with maximal $\alpha(C_\infty^{(m)})$. Then a.s.
\begin{equation}\label{ratio}
\lim_{n\to\infty} \frac{1}{n} \sum_{i=0}^{n-1} \1_{C_\infty^{(M)}}(X_i) \ge \mathbb P(|C_\omega(o)|=\infty)/k.
\end{equation}
Since $C_\infty^{(M)}$ is an invariant random infinite subset of $\mathcal G$, also $\mathbb P(o \in C_\infty^{(M)}) > 0.$ Thus, for $r\ge0$ fixed, conditional on $\{o \in C_\infty^{(M)}\}$, we again have that a.s. 
$$
\lim_{n\to\infty} \frac{1}{n} \sum_{i=0}^{n-1} \1_{\{X_i\leftrightarrow o, d(o,X_i)> r \}} \ge \mathbb P(|C_\omega(o)|=\infty)/k > 0,
$$
which leads to the same contradiction as the one derived after \eqref{equ:FrequTPFBound}.
\end{proof}

We now handle the general invariant case using ergodic decomposition and the following lemma about cluster frequencies from \cite[Lemma 4.2]{LS99}. 

\begin{lemma} \label{lm:frequ}
Let $\mathcal G=(V,E)$ be the Cayley graph of a finitely generated, infinite group $\Gamma$. Then there exists a $\Gamma$-invariant measurable function ${\rm freq}: \{0,1\}^E \to[0,1]$ with the following property. Let $\mathbb P$ be a $\Gamma$-invariant bond percolation with configuration $\omega$ on~$\mathcal G$. Let $o\in V$ and let $P^X_o$ be the law of simple random walk $(X_i)_{i=0}^\infty$ on $\mathcal G$ started in $o$. Then $\P\otimes P^X_o$-a.s.
$$
\lim_{n\to\infty} \frac{1}{n} \sum_{i=0}^{n-1} \mathbf 1_{\{ X_i\in C\}} = {\rm frequ}(C) \quad \text{for every cluster~} C \text{ of } \omega.
$$ 
\end{lemma}

\begin{proof}[Proof of Proposition~\ref{prop:ConsequenceVanishingTPF}] Let 
$$
\mathbb P(\cdot) = \int \mu(\cdot) \mathbb Q( d \mu)
$$
be the ergodic decomposition of $\mathbb P$, i.e.~$\mathbb Q$ is a probability measure on $\Gamma$-invariant ergodic bond percolations $\mu$. 

For a percolation configuration $\omega$, let $N_\infty(\omega)$ be the number of infinite clusters. By ergodicity, for every $\mu$ in the above decomposition, $\mu(N_\infty(\omega)=k)=1$ for some $k\in\{0,1,2,\ldots\}\cup\{\infty\}$. Define
$$
E_k:=\{ \mu : \mu (N_\infty(\omega)=k)=1 \}
$$
and, for $\delta>0$, define
$$
E_{k,\delta}:=\{ \mu : \mu (N_\infty(\omega)=k)=1 \ \mbox{and} \ \mu (|C_\omega(o)|=\infty)\ge \delta \}.
$$

To reach a contradiction, suppose that $\mathbb P$ is not supported on configurations with either only finite clusters or with infinitely many infinite clusters. Then there is $k_0\in\{1,2,\ldots\}$ such that $\mathbb Q(E_{k_0})>0$. Since for every $\mu\in E_{k_0}$ we have that $\mu( |C_\omega(o)|=\infty)>0$, we also have that there exists $\delta_0>0$ such that $\mathbb Q(E_{k_0,\delta_0})>0$. 

Let $X:=(X_i)_{i=0}^\infty$ be independent simple random walk on $\mathcal G$ started at $o$. We now make several uses of the fact that in every invariant bond percolation on $\mathcal G$, every cluster has a well-defined measurable frequency in the sense of Lemma~\ref{lm:frequ}. First, we have seen in the proof of Lemma~\ref{lm:ConsequenceVanishingTPFergodic} (recall \eqref{ratio} above) that every $\mu\in E_{k_0,\delta_0}$ is supported on configurations in which there exists a cluster of frequency at least $\delta_0/k_0$. Since $q:=\mathbb Q(E_{k_0,\delta_0})>0$, it follows that also under $\mathbb P$, with probability at least $q$, there exists a cluster of frequency at least $\delta_0/k_0$. Let $K$ be a uniformly chosen cluster of frequency at least $\delta_0/k_0$ (note that there can be at most $\lceil k_0/\delta_0 \rceil$) if such a cluster exists and let $K$ be empty otherwise. Then $q_0:=\mathbb P(o\in K)>0$ and, conditional on $\{o\in K\}$, a.s.
$$
\lim_{n\to\infty} \frac{1}{n} \sum_{i=0}^{n-1} \1_{\{X_i\leftrightarrow o, d(o,X_i)> r \}} \ge \delta_0/k_0>0.
$$
This leads to the same contradiction as the one derived after \eqref{equ:FrequTPFBound}.\end{proof}

\section{\bf Kazhdan's Property (T) and Applications} \label{kazhdan}

The main theme of this section is the appearance of thresholds. These thresholds are such that invariant bond percolations with expected degree exceeding the threshold do not exhibit decay of the two-point function. Failure to decay at infinity may occur in two (in some sense three) qualitatively distinct ways, which depend on the strength of Kazhdan's property~(T) the group has, i.e.~whether it has actual property (T), relative property (T) or only fails to have the Haagerup property. This section is divided into three subsections corresponding to each of these cases and we refer to Remarks~\ref{rem-CD-LRO}~-~\ref{rem-Haagerup-vs-Kazhdan} and Remark \ref{rem-InBetween} for the comparison. In Section \ref{subsection-Kazhdan}, we provide the threshold for Kazhdan groups and prove the characterization of Kazhdan's property (T) announced in Theorem \ref{maintheorem-Kazhdan}. In Section~\ref{subsection-RelativeKazhdan}, we extend this result to characterize relative property (T) and provide the application of this threshold to Bernoulli percolation. In Section \ref{subsection-WeakKazhdan}, we study the threshold appearing for groups which fail to have the Haagerup property and provide another application to Bernoulli bond percolation, which applies to Cayley graphs satisfying a weak form of spherical symmetry.

\subsection{Kazhdan's Property (T)} \label{subsection-Kazhdan}

We start by recalling the definition of Kazhdan's property (T). If $(\pi,H)$ is a unitary representation of $\Gamma$, $\varphi \in H$ is an {\it invariant vector} if $\pi(s)\varphi=\varphi$ for every $s\in\Gamma$. Let $\eps>0$ and $K\subset\Gamma$ finite, then a unit vector $\varphi \in H$ is called {\it $(\eps,K)$-invariant} if 
$$
\sup_{s \in K} \Vert \pi(s)\varphi - \varphi \Vert < \eps.
$$
The pair $(\eps,K)$ is called a {\it Kazhdan pair} for $\Gamma$ if every unitary representation of $\Gamma$ which has an $(\eps,K)$-invariant vector also has a non-zero invariant vector. The group $\Gamma$ has {\it property (T)} if there exist $\eps>0$ and a finite set $K\subset\Gamma$ such that $(\eps,K)$ is a Kazhdan pair. The goal of this subsection is to prove the following characterization of property (T) in terms of invariant percolations.

\begin{theorem}[\bf Group-Invariant Percolation and Kazhdan's Property (T)] \label{theorem-KazhdanGroups}
Let $\skrig$ be the Cayley graph of a finitely generated group $\Gamma$. Then $\Gamma$ has property (T) if and only if there exists a threshold $\alpha^*<1$ such that for every $\Gamma$-invariant bond percolation $\P$ on $\skrig$, $\E[\deg_\omega(o)]>\alpha^*\deg(o)$ implies that $\tau$ is bounded away from zero, i.e.\
$$
\inf_{g,h\in\Gamma} \tau(g,h)>0.
$$
In fact, the threshold $\alpha^\star$ satisfies 
$$
\alpha^* \leq \inf_{(\eps,K)} \bigg[1-\eps^2/\big(4\deg(o)d_{K,\skrig}\big)\bigg],
$$
where the infimum runs over all Kazhdan pairs $(\eps,K)$ for $\Gamma$ and $d_{K,\skrig} \coloneqq \max \big\{ d(o,g) \colon g \in K\big\}$.

\end{theorem}


\vspace{1mm}

\begin{remark}[\bf Connectivity Decay vs.~Long-Range Order] \label{rem-CD-LRO} 
A two-point function $\tau$ may fail to vanish at infinity in two qualitatively distinct ways. Following Lyons and Schramm \cite{LS99}, we say that $\tau$ exhibits {\em connectivity decay}, if
    \begin{equation}
    \inf_{g,h \in \Gamma} \tau(g,h) = 0,
    \end{equation}
    and otherwise say that $\tau$ exhibits {\em long-range order}, i.e.
    \begin{equation}
    \inf_{g,h \in \Gamma} \tau(g,h) > 0.
    \end{equation}
    If $\tau$ fails to vanish at infinity, connectivity decay is a particularly interesting case because it implies that $\tau(o,g)$ decays to zero in at least one direction $g\to\infty$, while it stays bounded away from zero in some other direction. This is not exceptional behavior. In fact, it already appears in Bernoulli bond percolation \cite[Remark 1.3]{LS99}: take the canonical Cayley graph of the free product $\Z^2*\Z_2$, i.e.~the Cayley graph corresponding to the presentation $\langle a,b,c \mid ab=ba,c=c^{-1}\rangle$. 
    For $p\in(1/2,1)$, we claim that $\tau_p$ does not vanish at infinity and exhibits connectivity decay. Indeed, since removal of any edge corresponding to the generator $c$ breaks the graph into two connected components, $\tau_p$ exhibits connectivity decay. On the other hand, there is a unique infinite cluster in every $\Z^2$ fiber (the set of pairs $g,h$ such that $g^{-1}h$ is in the group generated by $a$ and $b$), and thus $\tau_p$ does not vanish at infinity. 
\end{remark}

\begin{remark}[\bf Haagerup vs.~Kazhdan's Property] \label{rem-Haagerup-vs-Kazhdan}
The fact that a two-point function which does not vanish at infinity may exhibit either connectivity decay or long-range order as described in Remark \ref{rem-CD-LRO} corresponds to the fact that Theorem \ref{maintheorem-Haagerup} is not the negation of Theorem \ref{maintheorem-Kazhdan}, nor vice-versa. 
 Long-range order is the strongest form of non-vanishing at infinity, which quite remarkably is exhibited by invariant bond percolations with large marginals on Kazhdan groups irrespective of the specific model. Examples of groups which satisfy neither the Haagerup nor Kazhdan's property, together with an explanation of how this manifests in the distinction between connectivity decay and long-range order (as a condition on all models) are given in Remark \ref{rem-InBetween}.\end{remark}

\proof[{\bf Proof of Theorem \ref{theorem-KazhdanGroups}}.] 
The proof naturally splits into two parts, namely showing that the existence of a long-range order threshold is both necessary and sufficient.

\noindent{\bf ($``\Leftarrow"$ Long-Range Order Threshold is Sufficient for Property (T):)} Let us recall
that the sufficiency part of Theorem \ref{theorem-KazhdanGroups} is new and its proof will rely on our construction used in the proof of Theorem \ref{theorem-GroupsWithTheHaagerupProperty}. More precisely, assume that there exists a threshold $\alpha^*<1$ such that for every $\Gamma$-invariant bond percolation $\P$ on $\skrig$ with $\E[\deg_\omega(o)]>\alpha^*\deg(o)$, the two-point function is bounded away from zero. Recall that if every conditionally negative definite function $\psi$ on $\Gamma$ is bounded, then $\Gamma$ has property~(T) (this is classical, see e.g.~\cite{CCJJV01}). Let $\psi$ be a conditionally negative definite function. By Lemma \ref{lemma3.5},there exis $\Gamma$-invariant bond percolations $\bP_p$ on $\skrig$, for $p\in(0,1)$, such that 
$$
\lim_{p\to1} \bE_p[\deg_\omega(o)]=\deg(o)
$$
and such that
$$
\tau_p(o,g) \leq \e^{-(1-p)\sqrt{\psi(g)}} \quad \mbox{for every} \, \, g\in\Gamma, p\in(0,1).
$$
Then, for a fixed $p$ sufficiently close to $1$, the two-point function $\tau_p$ of $\bP_p$ is bounded away from zero. Hence $\sqrt\psi$, and thus $\psi$, must be bounded. It follows that $\Gamma$ has property (T).  \eproof

\noindent{\bf ($``\Rightarrow"$ Long-Range Order Threshold is Necessary for Property (T):)} As mentioned before, the part of Theorem \ref{theorem-KazhdanGroups} concerning the qualitative consequence of property (T) is due to Lyons and Schramm \cite[Remark 6.7]{LS99} and was applied there to show that~$p_u<1$, and to prove non-uniqueness at~$p_u$, for Cayley graphs of Kazhdan groups \cite[Corollary 6.6]{LS99}. There it was noted that this qualitative implication may be deduced from a classical fact that if $\Gamma$ has property~(T), then every sequence of normalized, positive definite functions on $\Gamma$ which converges pointwise to $1$, converges to $1$ uniformly on~$\Gamma$ (see e.g.~\cite{CCJJV01}). In \cite[Theorem 5.6]{IKT09}, it was shown that for a group with property (T) and a Kazhdan pair $(\eps, K)$ with $K$ a generating set, every invariant, insertion-tolerant, ergodic bond percolation $\P$ on the Cayley graph with respect to $K$, which satisfies $\inf_{e} \P\big(e\in\omega\big)\geq 1- \frac{\eps^2}2$, has a unique infinite cluster. Based on this method, and for the convenience of the reader, we will provide a quantitative proof of the consequence of property (T), i.e.~of the assertion that
\begin{equation} \label{eq-EstimateThreshold}
\beta=\inf_{(\eps,K)} \bigg[1-\eps^2/\big(4\deg(o)d_{K,\skrig}\big)\bigg],
\end{equation}
with the infimum running over all Kazhdan pairs $(\eps,K)$ for $\Gamma$ and $d_{K,\skrig} \coloneqq \max \{ d(o,g) \colon g \in K\}$, provides a threshold for long-range order. We point out that there is a minor difference to the argument from \cite{IKT09}, namely that we will consider an a-priori fixed Cayley graph of $\Gamma$ and not one built using the set~$K$, thereby getting slightly different constants.

We recall the following lemma, which follows from \cite{DR95}.

\begin{lemma}[{\cite[Proposition 5.3]{IKT09}}] \label{lemma-DR95}
\label{lemma: Kazhdan lower bound} Let $\Gamma$ be a group with property (T), let $(\eps,K)$ be a Kazhdan pair for $\Gamma$, let $\delta>0$ and let $\varphi$ be a normalized, positive definite function. Then 
$$
\min_{g\in K} \, {\rm Re} (\varphi(g))\geq1-\frac{\delta^2\eps^2}{2},
$$
where ${\rm Re}(z)$ for $z\in\C$ denotes the real part, implies that
$$
\inf_{g\in\Gamma} \, {\rm Re}(\varphi(g))\geq 1-2\delta^2.
$$
\end{lemma}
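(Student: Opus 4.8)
The plan is to reformulate the hypothesis and the conclusion as statements about a single unit vector in the Gelfand--Naimark--Segal (GNS) representation attached to $\varphi$, and then to exploit property (T) by decomposing the representation space into its invariant part and the orthogonal complement.

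First I would invoke the GNS construction: since $\varphi$ is normalized and positive definite, there is a unitary representation $(\pi,H)$ of $\Gamma$ and a unit vector $\xi\in H$ with $\varphi(g)=\langle\pi(g)\xi,\xi\rangle$ for all $g\in\Gamma$. The elementary polarization identity
$$
\|\pi(g)\xi-\xi\|^2 = 2-2\,{\rm Re}\,\varphi(g)
$$
then converts both the hypothesis and the target into bounds on the displacement $\|\pi(g)\xi-\xi\|$: the assumption $\min_{g\in K}{\rm Re}\,\varphi(g)\geq 1-\tfrac{\delta^2\eps^2}{2}$ is equivalent to $\|\pi(g)\xi-\xi\|\leq \delta\eps$ for every $g\in K$, while the desired conclusion $\inf_{g\in\Gamma}{\rm Re}\,\varphi(g)\geq 1-2\delta^2$ is equivalent to $\|\pi(g)\xi-\xi\|\leq 2\delta$ for every $g\in\Gamma$.

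Next I would split $H=H^\pi\oplus H_0$, where $H^\pi$ is the closed subspace of $\Gamma$-invariant vectors and $H_0=(H^\pi)^\perp$; both are $\pi$-invariant, and the restriction $\pi|_{H_0}$ has no nonzero invariant vector. Writing $\xi=\xi_0+\xi_1$ accordingly, invariance of $\xi_0$ gives $\pi(g)\xi-\xi=\pi(g)\xi_1-\xi_1$ for every $g$, so all displacements of $\xi$ occur inside $H_0$. The crucial input is the contrapositive of the Kazhdan-pair property applied to $\pi|_{H_0}$: since this subrepresentation has no nonzero invariant vector, it admits no $(\eps,K)$-invariant unit vector, whence by homogeneity $\sup_{g\in K}\|\pi(g)\zeta-\zeta\|\geq\eps\|\zeta\|$ for every $\zeta\in H_0$.

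Applying this to $\zeta=\xi_1$ and combining with $\sup_{g\in K}\|\pi(g)\xi_1-\xi_1\|\leq\delta\eps$ (the reformulated hypothesis) yields $\eps\|\xi_1\|\leq\delta\eps$, that is $\|\xi_1\|\leq\delta$. Finally, for arbitrary $g\in\Gamma$ the triangle inequality gives
$$
\|\pi(g)\xi-\xi\|=\|\pi(g)\xi_1-\xi_1\|\leq 2\|\xi_1\|\leq 2\delta,
$$
which is exactly the reformulated conclusion. The only delicate point is the correct use of property (T): one must ensure the Kazhdan-pair condition is applied to the subrepresentation $\pi|_{H_0}$, which by construction lacks invariant vectors, and then pass from the unit-vector formulation to the homogeneous bound $\sup_{g\in K}\|\pi(g)\zeta-\zeta\|\geq\eps\|\zeta\|$ on all of $H_0$. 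Everything else is bookkeeping with the displacement identity.
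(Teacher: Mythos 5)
Your proof is correct and is essentially the same argument as in the references the paper cites for this lemma (the paper itself gives no proof, deferring to \cite[Proposition 5.3]{IKT09} and \cite{DR95}, whose proof is precisely this GNS construction, the identity $\|\pi(g)\xi-\xi\|^2=2-2\,{\rm Re}\,\varphi(g)$, and the splitting $H=H^\pi\oplus H_0$ with the Kazhdan-pair bound applied to the component in $H_0$). All steps, including the homogeneity upgrade of the Kazhdan condition on $H_0$ and the final triangle inequality, check out.
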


Now assume that $\Gamma$ has property (T). Let $(\eps,K)$ be any Kazhdan pair for~$\Gamma$ and recall that $d_{K,\skrig} \coloneqq \max \{ d(o,g) \colon g \in K \}.$ Let $\P$ be a $\Gamma$-invariant bond percolation on $\skrig=(V,E)$ with two-point function $\tau$ and define 
$$
m\coloneqq \inf_{e\in E} \P[e\in\omega] = \min_{o\sim g} \tau(o,g).
$$
For every $g\in\Gamma$, we then have the lower bound $\tau(o,g)\geq1-d(o,g)(1-m)$: to see this, let $(o,g)$ denote a shortest path of edges joining $o$ to $g$, then
\begin{flalign*}
\tau(o,g) & =\P\big[ o \leftrightarrow g\big] \geq \P\big[ (o,g) \subset \omega\big] = \P\bigl[ e \in \omega \text{ for all } e \in (o,g) \bigr] \\
& \geq 1-\sum_{e\in(o,g)} \P[e \notin \omega] \\
& \geq 1-d(o,g)(1-m).
\end{flalign*}
In particular, it follows that $\min_{g\in K} \tau(o,g) \geq 1-d_{K,\skrig}(1-m).$ Hence 
\begin{equation} \label{equ-ProofTMarginals}
m>1-\delta^2\eps^2/(2d_{K,\skrig})
\end{equation}
for some $\delta>0$ implies that
$$
\min_{g\in K} \tau(o,g) >1-\frac{\delta^2\eps^2}{2}.
$$
Rewriting 
$$
\E[\deg_\omega(o)] = \sum_{o \sim g} \tau(o,g) \leq m + (\deg(o)-1)
$$
shows that
\begin{equation} \label{equ-ProofTDegree}
\E[\deg_\omega(o)] > \biggl( 1 - \frac{\delta^2\eps^2}{2\deg(o)d_{K,\skrig} } \biggr) \deg(o) = \deg(o)-\frac{\delta^2\eps^2}{2d_{K,\skrig}},
\end{equation}
implies \eqref{equ-ProofTMarginals}. Thus every $\Gamma$-invariant bond percolation satisfying \eqref{equ-ProofTDegree}, also satisfies \eqref{equ-ProofTMarginals}. Now suppose that \eqref{equ-ProofTDegree} holds and $\delta \in (0,1/\sqrt{2})$, then Lemma \ref{lemma-DR95} implies
$$
\inf_{g,h} \tau(g,h) = \inf_{g\in\Gamma} \tau(o,g) > 1-2\delta^2 > 0.
$$
This shows that $\alpha^*$ can be taken to be
$$
1 - \frac{\delta^2\eps^2}{2\deg(o)d_{K,\skrig} } <1,
$$
as claimed. Letting $\delta\to 1/\sqrt{2}$, we in particular see that the optimal threshold is less than or equal to
\begin{equation} \label{eq-alphastar}
\inf \biggl\{ 1-\frac{\eps^2}{4\deg(o)d_{K,\skrig}} \colon (\eps,K) \, \, \mbox{is a Kazhdan pair for} \, \, \Gamma \biggr\},
\end{equation}
i.e.~the threshold $\beta$ defined in \eqref{eq-EstimateThreshold}. This concludes the proof of the direct implication. \eproof

\begin{remark}[\bf Marginals and Expected Degree] \label{remark-ThresholdKazhdan}
We have presented a threshold for the expected degree to keep the exposition consistent. As seen in the proof, such a threshold is essentially equivalent to a threshold for marginals. The lower bound on the expected degree may e.g.~be replaced by the assumption that each individual edge is present with probability at least $1-(\delta^2\eps^2)/(2d_{K,\skrig})$. If $S$ denotes the generating set used to define $\skrig$, then $d_{S,\skrig}=1$ and it is a standard fact that $(\eps_S,S)$ is a Kazhdan pair for $\Gamma$ for suitable $\eps_S>0$. Letting $\delta\to 1/\sqrt{2}$, it follows that every $\Gamma$-invariant bond percolation which contains each individual edge with probability at least $1-\eps_S^2/4$, has a two-point function bounded away from zero. This recovers the bound in \cite[Corollary 5.4 (ii)]{IKT09}. Moreover, it shows that the optimal threshold for the expected degree is less than or equal to $1-\eps_S^2/(4\deg(o))$.  \nopagebreak {\hfill\rule{2mm}{2mm}}
\end{remark}

\subsection{Relative Property (T) and Non-Uniqueness at $p_u$.} \label{subsection-RelativeKazhdan}

In this section, we find an extension of Theorem \ref{theorem-KazhdanGroups} to groups which have relative property~(T) and provide an application to the question of uniqueness in Bernoulli bond percolation. To start off, recall the definition of relative property~(T). Let $\Gamma$ be a finitely generated group and let $H$ be a subgroup. The pair $(\Gamma,H)$ has {\em relative property~(T)} if every unitary representation of $\Gamma$ on a Hilbert space which almost has invariant vectors, has a non-zero $H$-invariant vector, see \cite[Definition 1.4.3]{BdlHV08}. The notion of relative property~(T) for a pair was considered implicitly in Kazhdan's work \cite{K67} and explicitly by Margulis \cite{M82}. A well-known example is the pair $(\Z^2\rtimes{\rm SL}_2(\Z),\Z^2)$, which has relative property~(T) but $\Z^2\rtimes{\rm SL}_2(\Z)$ does not have property~(T). Characterizations of property~(T) can often be extended to formulations for relative property~(T), see e.g.\ \cite{BR95,J05,C06}. In this vein, using our construction from Theorem \ref{theorem-GeneralConstruction} we provide the following extension of Theorem \ref{theorem-KazhdanGroups} to the situation of relative property~(T) for the purpose of then providing an application of that ``long-range order'' threshold to Bernoulli percolation in Theorem \ref{theorem-UniquenessRelativeKazhdan}.

\begin{theorem}[\bf Group-Invariant Percolation and Relative Property (T)] \label{theorem-RelativeKazhdanGroups} 
Let $\Gamma$ be a finitely generated group, let $H$ be a subgroup and let $\skrig$ be a Cayley graph of $\Gamma$. Then the pair $(\Gamma,H)$ has relative property (T) if and only if there exists a threshold $\alpha^*<1$ such that for every $\Gamma$-invariant bond percolation $\P$ on $\skrig$, $\E[\deg_\omega(o)]>\alpha^*\deg(o)$ implies that $\tau$ is bounded away from zero on each left $H$-coset, i.e.\ for every $g\in\Gamma$
$$
\inf_{h\in H} \tau(g,gh) > 0.
$$
\end{theorem}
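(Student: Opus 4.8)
The plan is to handle the two implications separately, starting from the elementary but decisive observation that by $\Gamma$-invariance of the two-point function one has $\tau(g,gh)=\tau(o,h)$ for every $g\in\Gamma$ and $h\in H$. Consequently the condition that $\tau$ be bounded away from zero on each left $H$-coset is equivalent to the single requirement $\inf_{h\in H}\tau(o,h)>0$. This reduces the theorem to a statement about the restriction to $H$ of the normalized positive definite function $\varphi\coloneqq\tau(o,\cdot)$, which is positive definite by Lemma \ref{lemma-TPFPosDef}.

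For the direct implication I would argue by contradiction. Assume $(\Gamma,H)$ has relative property (T) but that no threshold exists; then for each $n$ there is a $\Gamma$-invariant bond percolation $\P_n$ with $\E_n[\deg_\omega(o)]>(1-\tfrac1n)\deg(o)$ yet $\inf_{h\in H}\tau_n(o,h)=0$. Exactly as in the proof of Proposition \ref{prop-WeakKazhdanThreshold}, the lower bound on the expected degree forces the minimal edge-marginal $m_n$ to tend to $1$, and the estimate $\tau_n(o,g)\geq 1-d(o,g)(1-m_n)$ then shows that the positive definite functions $\varphi_n\coloneqq\tau_n(o,\cdot)$ converge pointwise to $1$. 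I would then invoke the functional characterization of relative property (T) recalled just before Theorem \ref{theorem-RelativePropertyTandSWMW} (from \cite{C06}): since $(\Gamma,H)$ has relative property (T), the sequence $\varphi_n$ must converge to $1$ uniformly on $H$, so $\inf_{h\in H}\varphi_n(h)\to 1$, contradicting $\inf_{h\in H}\tau_n(o,h)=0$ for large $n$.

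For the converse, assume the threshold $\alpha^*<1$ exists and use Jaudon's characterization, Theorem \ref{theorem-RelativePropertyTandSWMW}: it suffices to show that every invariant action of $\Gamma$ on a space with measured walls $(X,\skriw,\skrib,\mu)$ has bounded $H$-orbits. Suppose instead that $\sup_{h\in H}w(x_0,hx_0)=\infty$ for some such action. Feeding this action into Lemma \ref{lemma-GeneralConstruction} produces, for each $p\in(0,1)$, a $\Gamma$-invariant bond percolation $\P_p$ with $\E_p[\deg_\omega(o)]\geq\alpha_p\deg(o)$, where $\alpha_p\to1$ as $p\to1$, and with $\tau_p(o,g)\leq\exp(-(1-p)\,w(x_0,gx_0))$ for all $g$. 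Choosing $p$ close enough to $1$ that $\alpha_p>\alpha^*$, the threshold hypothesis gives $\inf_{h\in H}\tau_p(o,h)>0$, while the upper bound yields $\inf_{h\in H}\tau_p(o,h)\leq\exp(-(1-p)\sup_{h\in H}w(x_0,hx_0))=0$, a contradiction. Hence all $H$-orbits are bounded and $(\Gamma,H)$ has relative property (T).

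The hard part will be the direct implication, and specifically the fact that $H$ is an arbitrary subgroup. One is tempted to mimic the quantitative argument used for property (T) (Lemma \ref{lemma-DR95} together with the proof of Theorem \ref{theorem-KazhdanGroups}) by passing to the space $\mathcal H^H$ of $H$-invariant vectors in a GNS representation of $\varphi$; the obstruction is that, unlike the space of $\Gamma$-invariant vectors, $\mathcal H^H$ is not $\Gamma$-invariant once $H$ fails to be normal, so the orthogonal-complement and spectral-gap mechanism underlying the Deutsch--Robertson estimate does not apply directly, and one can only control the distance of a near-invariant vector to the larger $\Gamma$-invariant subspace generated by $\mathcal H^H$. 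Routing the argument through the purely functional characterization of \cite{C06} sidesteps this difficulty and accommodates arbitrary $H$, at the (harmless) cost of yielding only the qualitative existence of a threshold, which is precisely what the statement claims. When $H$ is normal---in particular in the application to Bernoulli percolation at $p_u$---the subspace $\mathcal H^H$ is $\Gamma$-invariant and the quantitative Kazhdan-pair argument does go through, providing an explicit threshold in the spirit of Remark \ref{remark-ThresholdKazhdan}.
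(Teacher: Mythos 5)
Your proposal is correct and follows essentially the same route as the paper: the converse direction via Jaudon's characterization (Theorem \ref{theorem-RelativePropertyTandSWMW}) combined with the percolations from Lemma \ref{lemma-GeneralConstruction}, and the direct implication via the reduction $\tau(g,gh)=\tau(o,h)$ and the functional characterization of relative property (T) from \cite{C06} applied to the positive definite functions $\tau_n(o,\cdot)$ converging pointwise to $1$. The paper phrases the direct implication as a contrapositive rather than a contradiction, which is the only (immaterial) difference; your closing remarks on why the Deutsch--Robertson mechanism fails for non-normal $H$ are apt but not needed.
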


\proof Suppose that there exists a threshold $\alpha^*<1$ such that for every $\Gamma$-invariant bond percolation $\P$ on $\skrig$, $\E[\deg_\omega(o)]>\alpha^*\deg(o)$ implies that $\tau$ is bounded away from zero on each left $H$-coset. We recall that the methods of \cite{RS98,CMV04} yield the following characterization of relative property (T) in terms of invariant actions on spaces with measured walls, as observed in \cite[Theorem 3.3.6]{J07}. Let $\Gamma$ be a countable group and let $H$ be a subgroup. Then the pair $(\Gamma,H)$ has relative property (T) if and only if for every invariant action of $\Gamma$ on a space with measured walls $(X,\skriw,\skrib,\mu)$, the $H$-orbits are bounded in the sense that for every $x_0\in X$
$$
\sup_{h\in H} w(x_0,hx_0) = \sup_{h\in H} \mu(\skriw(x_0,hx_0)) < \infty.
$$
We may thus show that the pair $(\Gamma,H)$ has relative property (T) by verifying that every invariant action of $\Gamma$ on a space with measured walls has bounded $H$-orbits. To that end, suppose that $\Gamma$ has an invariant action on a space with measured walls $(X,\skriw,\skrib,\mu)$ and let $x_0\in X$. By Theorem \ref{theorem-GeneralConstruction}, for suitable $p\in(0,1)$, there exists a $\Gamma$-invariant bond percolation $\P$ on $\skrig$ such that $\E[\deg_\omega(o)]>\alpha^* \deg(o)$ and 
\begin{equation} \label{eq-Rel1}
\tau(o,g) \leq \exp \bigl( -(1-p)w(x_0,gx_0) \bigr) \quad \mbox{for every} \, \, g \in \Gamma.
\end{equation}
It follows from our assumption that
\begin{equation} \label{eq-Rel2}
\inf_{h\in H} \tau(o,h) > 0.
\end{equation}
Combining \eqref{eq-Rel1} and \eqref{eq-Rel2} shows that
$$
\sup_{h\in H} w(x_0,hx_0)<\infty,
$$
i.e.\ the $H$-orbit of $x_0$ is bounded. Since $x_0\in X$ was arbitrary, all $H$-orbits are bounded and it follows that the pair $(\Gamma,H)$ has relative property (T).

Conversely, suppose there exists a sequence $\P_n$ of $\Gamma$-invariant bond percolations on $\skrig$ such that 
$$
\lim_{n\to\infty} \E_n[\deg_\omega(o)]=\deg(o)
$$
and such that for every $n\in\N$ there exists $g_n\in\Gamma$ with
$$
\inf_{h\in H} \tau_n(g_n,g_nh) = 0.
$$
Then the sequence $\tau_n(o,\cdot)=\tau_n(g_n,g_n \cdot)$ is a sequence of positive definite functions converging pointwise to $1$, but not uniformly on $H$. Since a pair $(\Gamma,H)$ has relative property (T) if and only if every sequence of positive definite functions on $\Gamma$ which converges pointwise to $1$, converges to $1$ uniformly on $H$ (see e.g.~\cite[Theorem 1.1]{C06}), we conclude that the pair $(\Gamma,H)$ does not have relative property (T). \eproof

\begin{remark}[\bf Groups without the Haagerup Property and without Property (T)] \label{rem-InBetween} Returning to the discussion in Remark \ref{rem-Haagerup-vs-Kazhdan}, we now elaborate on the subtle behavior of groups in between groups with the Haagerup and Kazhdan properties: recall that $\Z^2\rtimes{\rm SL}_2(\Z)$ does not have property~(T), but does have relative property (T) with respect to an infinite subgroup. Theorem \ref{theorem-RelativeKazhdanGroups} thus shows that there are groups such that invariant bond percolations with sufficiently large marginals fail to vanish at infinity by being bounded away from zero only in a specific direction. Even more curiously, the existence of groups without the Haagerup property and without property (T) with respect to any infinite subgroup (see e.g.~\cite{C06}) implies that there are groups such that two-point functions of invariant bond percolations with sufficiently large marginals fail to vanish at infinity, but there can be no ''common subgroup-direction'' in which they are bounded away from zero in the following sense: for every infinite subgroup $H$, there exist models with arbitrarily large marginals which are not bounded away from zero on $H$-cosets (Theorem \ref{theorem-KazhdanGroups} and Theorem \ref{theorem-RelativeKazhdanGroups}.
\end{remark}

Recall that for Cayley graphs of groups with property (T), Bernoulli percolation at the uniqueness threshold does not have a unique infinite cluster, see \cite[Corollary 6.6]{LS99}.
The fact that this also holds for groups $\Gamma$ which admit an infinite normal subgroup $H$ such that the pair $(\Gamma,H)$ has relative property~(T) has appeared in \cite{GT16}. Note that this in particular implies $p_u<1$ for these Cayley graphs. We will give alternative proofs of these results using Theorem \ref{theorem-KazhdanGroups} and Theorem \ref{theorem-RelativeKazhdanGroups}. Due to the fact that these proofs only use invariant percolations, we believe them to be of interest and in this vein, 
we will include a proof of the strongest result.

\begin{theorem}[\bf Property (T) and Relative Property (T) imply Non-Uniqueness at $p_u$] \label{theorem-UniquenessRelativeKazhdan} 
Let $\Gamma$ be a finitely generated group and let $H$ be an infinite, normal subgroup such that the pair $(\Gamma,H)$ has relative property~(T). Let $\skrig$ be any Cayley graph  of $\Gamma$. Then $p_u(\skrig)$-Bernoulli bond percolation does not have a unique infinite cluster a.s.

In particular, $p_u(\skrig)$-Bernoulli bond percolation on any Cayley graph of a countable group with property (T) does not have a unique infinite cluster a.s.
\end{theorem}

\proof Let $\omega_{p_u}$ denote the configuration of $p_u\coloneqq p_u(\skrig)$-Bernoulli bond percolation on $\skrig$. Suppose that there is a unique infinite cluster $C_\infty$ in $\omega_{p_u}$ a.s. We use a construction of Benjamini, Lyons, Peres and Schramm \cite{BLPS99b}: for each $g\in\Gamma$ let $C(g)$ denote the set of vertices in $C_\infty$ which are closest to $g$ in the graph distance. Let $\gamma_\eps$ denote the configuration of an $\eps$-Bernoulli bond percolation on $\skrig$ which is independent of $\omega_{p_u}$. Now define a bond percolation $\xi_\eps$ as follows: let the edge $[g,h]$ be contained in $\xi_\eps$ if and only if $\max \{ d(g,C_\infty),d(h,C_\infty) \}<1/\eps$ holds and additionally $C(g)$ and $C(h)$ are contained in the same connected component of $\omega \setminus \gamma_\eps$. It follows from the bounded convergence theorem that $\xi_{\eps}$ has marginals arbitrarily close to $1$ when $\eps>0$ is sufficiently small, see \cite[Section 3]{BLPS99b}. Now Theorem~\ref{theorem-RelativeKazhdanGroups} implies that for some fixed, sufficiently small $\eps>0$, we have that $\tau_{\xi_\eps}$ is bounded away from zero on (left) $H$-cosets. We claim that there exists a constant $c>0$ such that 
\begin{equation}\label{eq-ProofRelKazhdan1}
c \, \tau_{\xi_\eps}(g,h) \leq \tau_{\omega_{p_u}\setminus\gamma_\eps}(g,h) \quad \mbox{for all} \, \, g,h\in\Gamma.
\end{equation}
To see this, observe that $\omega_{p_u}\setminus\gamma_\eps$ is a Bernoulli bond percolation with parameter $(1-\eps)p_u>0$. In particular
$$
c_1 \coloneqq \inf_{u \in \partial B_{1/\eps}(o)} \tau_{\omega_{p_u}\setminus\gamma_\eps} (o,u) > 0.
$$
By transitivity, for every $g\in\Gamma$,
$$
c_1=\inf_{u \in \partial B_{1/\eps}(g)} \tau_{\omega_{p_u}\setminus\gamma_\eps} (g,u).
$$
Let $g,h\in\Gamma$, $u\in\partial B_{1/\eps}(g)$ and $v \in \partial B_{1/\eps}(h)$. Applying the Harris-FKG inequality, we see that
\begin{align}
\tau_{\omega_{p_u}\setminus\gamma_\eps}(g,h) & \geq \P \Bigl( g \xleftrightarrow{\omega_{p_u}\setminus\gamma_\eps}u, u \xleftrightarrow{\omega_{p_u}\setminus\gamma_\eps} v, v \xleftrightarrow{\omega_{p_u}\setminus\gamma_\eps} h \Bigr) \nonumber \\
& \geq c_1^2 \tau_{\omega_{p_u}\setminus\gamma_\eps}(u,v). \label{eq-ProofRelKazhdan2}
\end{align}
Now by the definition of $\xi_\eps$ we have that
$$
\Bigl\{ g \xleftrightarrow{\xi_\eps} h \Bigr\} \subset \Bigl\{ B_{1/\eps}(g) \xleftrightarrow{\omega_{p_u}\setminus\gamma_\eps} B_{1/\eps}(h) \Bigr\}.
$$
Applying monotonicity, the union bound and inequality (\ref{eq-ProofRelKazhdan2}), we obtain that
\begin{align*}
\tau_{\xi_\eps}(g,h) & \leq \P \Bigl[  B_{1/\eps}(g) \xleftrightarrow{\omega_{p_u}\setminus\gamma_\eps} B_{1/\eps}(h) \Bigr] \\
& \leq \sum_{\substack{u \in \partial B_{1/\eps}(g) \\ v \in \partial B_{1/\eps}(h)}} \tau_{\omega_{p_u}\setminus\gamma_\eps}(u,v) \\
& \leq \frac{| \partial B_{1/\eps}(o) |^2}{c_1^2} \tau_{\omega_{p_u}\setminus \gamma_\eps} (g,h).
\end{align*}
This proves the inequality (\ref{eq-ProofRelKazhdan1}) with the constant $c=(|\partial B_{1/\eps}(o)|/c_1)^{-2}$. In particular, the fact that $\tau_{\xi_\eps}$ is bounded away from zero on $H$-cosets implies that the same holds for $\tau_{\omega_{p_u}\setminus\gamma_\eps}$.

To finish the proof, we now closely follow the argument of Hutchcroft and Pete in \cite[Section 3]{HP20}. The $H$-cosets $gH$ form a partition of the vertices of $\mathcal G$ and it will be useful to measure cluster frequencies inside cosets as follows. Enumerate the elements of $H$ as $\{h_1,h_2,\ldots\}$, let $(Z_i)_{i=1}^\infty$ be an i.i.d.~sequence of $H$-valued random variables with $\mathbb P(Z_i=h_k)=2^{-k}$ and write $\mathbf P_\gamma$ for the law of the random walk $(X_n)_{n=0}^\infty$ defined by $X_{n+1}=X_n Z_{n+1}$ for $n\ge 1$ and $X_0:=\gamma\in\Gamma$. Then an analogue of Lemma~\ref{lm:frequ} is that for every $gH$, there exists an $H$-invariant measurable function ${\rm freq}_{gH}: \{0,1\}^E \to[0,1]$ such that if $\mu$ is a $\Gamma$-invariant bond percolation with configuration $\omega$ on~$\mathcal G$, then
$$
\lim_{n\to\infty} \frac{1}{n} \sum_{i=0}^{n-1} \mathbf 1_{\{ X_i\in C\}} = {\rm freq}_{gH}(C) \quad \text{for every cluster~} C \text{ of } \omega
$$ 
 $\mu\otimes\mathbf P_\gamma$-a.s.~for every $\gamma\in gH$. Now recall that $\tau_{\omega_{p_u}\setminus\gamma_\varepsilon}$ is bounded away from zero on $H$-cosets. By Fubini's theorem and dominated convergence it follows that for every $gH$ there is a positive probability that there exists an $\omega_{p_u}\setminus\gamma_\varepsilon$-cluster $C_{gH}$ with ${\rm freq}_{gH}(C_{gH})>0$. Since $H$ is infinite, ergodicity implies that such a cluster exists a.s. For every $gH$, choose an $\omega_{p_u}\setminus\gamma_\varepsilon$-cluster $C_{gH}'$ uniformly at random from the finitely many clusters maximizing ${\rm freq}_{gH}$. Since $H$ is an infinite normal subgroup, it can be checked that cosets sharing a neighbor in $\mathcal G$ are
connected in $\mathcal G$ by infinitely many edges. Hence {\em sprinkling} will almost surely connect all clusters $C_{gH}'$: for every $\varepsilon'>0$, adding an independent $\varepsilon'$-Bernoulli bond percolation to $\omega_{p_u}\setminus\gamma_\varepsilon$ almost surely connects the clusters $C_{gH}'$ in neighboring $H$-cosets and hence all clusters $C_{gH}'$. Now deleting all clusters of the resulting percolation configuration other than the unique cluster containing $\bigcup C_{gH}'$, we obtain a $\Gamma$-invariant bond percolation with a unique infinite cluster. 

Up to this point, the above proof is identical to the one in \cite{HP20} except for the step deducing the existence of a cluster of positive frequency from a two-point function lower bound.$^4$\footnote{$^4$We may also point out that the argument using positive definiteness can also be used to prove~\cite[Corollary~2.7]{HP20} (and its version for relative property~(T)) which is the part of the proof of the main result of \cite{HP20} that uses the characterization of property~(T) due to Glasner and Weiss. Indeed, in the notation of \cite{HP20}, the measures $\mu_i$ weak$^*$-converge to the measure $p\delta_V+(1-p)\delta_\varnothing$, which implies that $\tau_i(g,h):=\mu_i(g \leftrightarrow h) \to p$ for all $g,h\in \Gamma$ (to see this, note that $\tau_i(g,h)\le \mu_i(g\in \omega)=p$ and, for a fixed path $\rho_{g,h}$ from $g$ to $h$, $\tau_i(g,h)\ge \mu_i( \rho_{g,h}\subset\omega) \to \mu(\rho_{g,h}\subset\omega) = p$). Hence the positive definite functions $\varphi_i:=\tau_i/p$ converge pointwise to $1$ and property~(T) implies that they converge to $1$ uniformly. This implies that, for $i$ sufficiently large, $\tau_i$ has long-range order and hence $\mu_i$ has a cluster of positive frequency with positive probability.}

With these preparations, we may conclude the proof as follows. By ergodic decomposition and the proof of Lemma~\ref{lm:ConsequenceVanishingTPFergodic}, the unique infinite cluster constructed above has a positive frequency for independent random walk on $\mathcal G$ almost surely. On the other hand, adding an independent $\varepsilon'$-Bernoulli bond percolation to $\omega_{p_u}\setminus\gamma_\varepsilon$ yields a percolation $\eta$ whose law is that of $\bigl((1-\varepsilon)p_u+(1-(1-\varepsilon)p_u)\varepsilon'\bigr)$-Bernoulli bond percolation. Since $\eta$ contains the unique infinite cluster constructed above, we obtain that $\eta$ has a cluster of positive frequency. The indistinguishability theorem of Lyons and Schramm~\cite[Theorem~1.1]{LS99} thus implies that $\eta$ has a uinque infinite cluster. For $\varepsilon'$ sufficiently small, this contradicts the definition of $p_u$.

We also note that in the special case of Cayley graphs of countable groups with property (T), the above argument can be shortened because the ''sprinkling'' step is not required. Indeed, if $\omega_{p_u}$ has a unique infinite cluster, then we may apply Theorem \ref{theorem-KazhdanGroups} instead of Theorem~\ref{theorem-RelativeKazhdanGroups} to directly conclude that $\tau_{\xi_\eps}$ is bounded away from zero on all of $\Gamma$ for $\eps$ sufficiently small. Since \eqref{eq-ProofRelKazhdan1} remains valid, {\color{blue}\cite[Theorem 4.1]{LS99}} 
shows that $\omega_{p_u\setminus\gamma_\eps}$ has a unique infinite cluster for $\eps$ sufficiently small, leading to the same contradicition as before. This proof is new also in this setting.
 \eproof

The following result was pointed out to us by an anonymous referee who also pointed out that it follows from \cite[Theorem 1.1, Theorem 1.3]{GJM25} combined with \cite[Theorem 6.16]{AL07}. The result is similar in spirit to Theorem~\ref{theorem-Spanning} and we have thus included an alternative proof based on similar ideas and the notion of cluster frequencies used in Section \ref{subsec clusterfreq}.

\begin{prop}[{\bf Robustness in Kazhdan Groups}] \label{prop:Robust} Let $\mathcal G$ be the Cayley graph of a finitely generated, infinite group $\Gamma$. Suppose that $\Gamma$ has property~(T). Then every $\Gamma$-invariant random nonempty connected subgraph $\omega$ of $\mathcal G$ has $p_u(\omega)<1$ a.s.
\end{prop}
\begin{proof} Without loss of generality, assume that $\omega$ is ergodic. In particular, $p_u(\omega)$ is a constant a.s. 

To reach a contradiction, suppose that $p_u(\omega)=1$ a.s. By augmenting the configuration as in the proof of Theorem~\ref{theorem-Spanning}, we obtain a $\Gamma$-invariant random spanning subgraph $\xi$ with $p_u(\xi)=1$ a.s. Let $p<1$ and let $\xi[p]$ denote the configuration of Bernoulli bond percolation on $\xi$. Then 
$$
\tau_p(g,h) := \mathbb P\big( g \overset{\xi[p]}{\longleftrightarrow} h \big) 
$$
is a positive definite function, and since $\xi$ is connected and spanning, $\lim_{p\to1} \tau_p(g,h) = 1$. Positive definiteness and property~(T) then imply that $\tau_p(\cdot,\cdot)\to1$ uniformly as $p\to 1$. In particular, there exists $p_0<1$ and $\delta>0$ such that $\tau_{p_0}(g,h)\geq \delta>0$ for all $g,h$. This implies that there is a positive probability that there exists an infinite cluster $C_{p_0}$ of $\xi[p_0]$ of frequency $\mathrm{frequ}(C_{p_0})\geq \delta>0$ in the sense of Lemma~\ref{lm:frequ}. Since $\xi$ is $\Gamma$-invariant and connected, the random rooted graph $(\xi,o)$ is a unimodular random graph, see e.g.~\cite{AL07}. The latter property then implies that for every $p<1=p_u(\xi)$ there are either only finite clusters or infinitely many infinite clusters in $\xi[p]$. It can be checked as in~\cite{LS99} that $\xi[p]$ has indistinguishable infinite clusters, implying that infinite clusters have the same frequency, which contradicts the fact that $\xi[p_0]$ has a cluster of positive frequency. Alternatively, the proof of uniqueness monotonicity~\cite{HP99} as given in~\cite[Theorem 5.4]{HJ06} can be adapted to the setting of unimodular random graphs to show that for every $p>p_0$ there exists a unique infinite cluster, which leads to the same contradiction. 
\end{proof}

\subsection{Weak Kazhdan Property and a Quantitative Converse to Theorem \ref{maintheorem-Haagerup}.} \label{subsection-WeakKazhdan}

In this subsection, we consider groups which do not have the Haagerup property. Following Glasner \cite[Definition 13.16]{G03}, we call such groups {\em weakly Kazhdan} or say they have {\em property $(T_w)$}. Consider now a Cayley graph $\skrig$ of a group $\Gamma$ with property $(T_w)$. Then Theorem \ref{theorem-GroupsWithTheHaagerupProperty} implies the existence of a threshold $\alpha^*_w \coloneqq \alpha^*_w(\skrig)<1$ such that whenever $\P$ is a $\Gamma$-invariant bond percolation with $\E[\deg_\omega(o)]>\alpha^*_w \deg(o)$, then $\tau(o,\cdot)$ does not vanish at infinity. We now give an upper bound on $\alpha^*_w$ in terms of another family of parameters associated with weakly Kazhdan groups. Since $\Gamma$ does not have the Haagerup property, there exist $\eps>0$ and $K\subset\Gamma$ finite with the following property:
\begin{equation}\label{eq-WeakKazhdan}
\begin{aligned}
&\text{every normalized positive definite function} \ \varphi \ \text{on} \ \Gamma  \ \text{satisfying}  \\
&\qquad \min_{g\in K} \varphi(g) \geq 1-\eps \ \text{does not vanish at infinity}.
\end{aligned}
\end{equation}
 Define the parameters
$$
\begin{aligned}
&d_{K,\skrig} \coloneqq \max \big\{ d(o,g) \colon g \in K\big\}, \\
&\alpha_{\eps,K,\skrig}\coloneqq 1- \frac{\eps}{\deg(o)d_{K,\skrig}}, \qquad\mbox{and}\\
&\kappa_w(\skrig)\coloneqq \inf\bigg\{\alpha_{\eps,K,\skrig} \colon \mbox{the pair $(\eps,K)$ satisfies \eqref{eq-WeakKazhdan}}\bigg\}.
\end{aligned}
$$

\vspace{1mm}

\begin{prop}[\bf Estimate of the Threshold for Weak Kazhdan Groups] \label{prop-WeakKazhdanThreshold}
Let $\skrig$ be the Cayley graph of a finitely generated group $\Gamma$ which has property $(T_w)$. Then $\alpha^*_w(\skrig) \leq \kappa_w(\skrig)$.
\end{prop}

\begin{proof} Let $\eps>0$ and $K\subset\Gamma$ finite such that every normalized positive definite function $\varphi \colon \Gamma \to [0,1]$ which satisfies
\begin{equation} \label{eq-WeakKazhdan1}
\min_{g\in K} \varphi(g) \geq 1-\eps,
\end{equation}
does not vanish at infinity. Let $\P$ be a $\Gamma$-invariant bond percolation on $\skrig=(V,E)$. 
Recall from the computations below Lemma~\ref{lemma: Kazhdan lower bound} that $\min_{g\in K} \varphi(g) \ge 1- d_{K,\mathcal G}(1-m)$, where $m={\rm min}_{o\sim g} \tau(o,g)$, and the fact that
\begin{equation} \label{equ:WeakKazhdanDegree}
\E[\deg_\omega(o)] > \alpha_{\eps,K,\skrig} \deg(o) = \Big( 1- \frac{\eps}{\deg(o)d_{K,\skrig}} \Big) \deg(o)
\end{equation}
implies that $m>1-\varepsilon/d_{K,\mathcal G}$, cf.~\eqref{equ-ProofTDegree}. Combining these two shows that \eqref{equ:WeakKazhdanDegree} implies that $\tau$ satisfies \eqref{eq-WeakKazhdan1} and hence does not vanish at infinity. The claim follows.
\end{proof}

\begin{remark}[\bf Marginals and Expected Degree] \label{remark-Marginals}
The lower bound on the expected degree may again be replaced by the assumption that each individual edge is present with probability strictly above $\inf_{(\eps,K)} 1-\eps/d_{K,\skrig}$, where the infimum is over all pairs $(\eps,K)$ satisfying~(\ref{eq-WeakKazhdan}).  \nopagebreak {\hfill\rule{2mm}{2mm}}
\end{remark}

\begin{remark}[\bf Threshold for Site Percolation]
In the setting of site percolation, Corollary \ref{cor-SitePercolation} implies the existence of a possibly different threshold $\alpha^*_{w,{\rm site}}\coloneqq \alpha^*_{w,{\rm site}}(\skrig)<1$ such that whenever $\P$ is a $\Gamma$-invariant site percolation satisfying $\P[o\in\omega]>\alpha^*_{w,{\rm site}}$, then $\tau(o,\cdot)$ does not vanish at infinity. It follows from essentially the same argument as in above proof of Proposition \ref{prop-WeakKazhdanThreshold} that $\alpha^*_{w,{\rm site}}\leq \inf_{(\eps,K)} 1-\eps/d_{K,\skrig}$ with the infimum taken over all pairs $(\eps,K)$ satisfying (\ref{eq-WeakKazhdan}).  \nopagebreak {\hfill\rule{2mm}{2mm}}
\end{remark}

We point out that our estimates of these thresholds (i.e., upper bounds away from $1$) follow from similar arguments in the cases of site and bond percolation and are presumably not sharp. Sharp thresholds for the existence of infinite clusters in automorphism-invariant bond, respectively site, percolations on the $d$-regular tree, $d\ge3$, are known from the pioneering work \cite{H97} and are $2/d$ and $d/(2d-2)$ respectively (in fact, there it is also shown that the critical cases where marginals are equal to these thresholds produce infinite clusters). Note that $d/(2d-2)>2/d$ for $d\ge3$. 

To the best of the authors' knowledge it remains open whether the thresholds for infinite clusters in non-amenable groups~\cite{BLPS99}, which coincide with the ones from \cite{H97} for trees, are sharp in general.

As pointed out by Lyons \cite{L00}, the distinction between connectivity decay and long-range order also reveals a lack of spherical symmetry in Cayley graphs (where a Cayley graph is called {\em spherically symmetric} if for all vertices with $d(o,g)=d(o,h)$, there exists an automorphism which fixes $o$ and maps $g$ to $h$). As a final application of the results in this section, we show that a (formally) weaker notion of spherical symmetry suffices to guarantee non-uniqueness at $p_u$ (and $p_u<1$) for weak Kazhdan groups.

\begin{cor} \label{cor-PuWeakKazhdan}
Let $\skrig$ be the Cayley graph of a finitely generated group $\Gamma$ which has property $(T_w)$. Suppose that $\skrig$ satisfies the following spherical symmetry assumption: 
\begin{itemize}
\item[{\rm(Sym)}] For every $p\in(0,1)$, the two-point function $\tau_p(\cdot,\cdot)$ of $p$-Bernoulli bond percolation vanishes at infinity if and only if it exhibits connectivity decay. 
\end{itemize}
Then $p_u(\skrig) <1$. Moreover, $p_u(\skrig)$-Bernoulli bond percolation does not have a unique infinite cluster.
\end{cor}

\proof The assertion that $p_u<1$ follows directly from the threshold. Indeed, Proposition \ref{prop-WeakKazhdanThreshold} implies that $\tau_p(o,\cdot)$ does not vanish at infinity for $p$ sufficiently close to $1$. Then (Sym) implies that it exhibits long-range order, i.e.\ is bounded away from zero. By \cite[Theorem 4.1]{LS99}, this implies that the corresponding $p$-Bernoulli bond percolation has a unique infinite cluster.  The assertion that there is not a unique infinite cluster at the uniqueness threshold follows from the same argument as employed in the first part of the proof of Theorem \ref{theorem-UniquenessRelativeKazhdan}, replacing the use of Theorem \ref{theorem-RelativeKazhdanGroups} with Proposition \ref{prop-WeakKazhdanThreshold}, and then upgrading non-vanishing at infinity to long-range order through (Sym).\eproof

Note also that in the setting of Corollary \ref{cor-PuWeakKazhdan}, Remark \ref{remark-Marginals} implies that $p_u(\skrig)\leq\inf_{(\eps,K)} 1-\eps/d_{K,\skrig}$ with the infimum taken over all pairs $(\eps,K)$ satisfying (\ref{eq-WeakKazhdan}).

\section{\bf Applications to Concrete Examples of Haagerup Groups}\label{sec 5}

In this section we will prove the results outlined in Section \ref{subsection-intro5}. In particular, Theorem \ref{theorem4}, Theorem~\ref{theorem5} and Proposition \ref{prop1} will be proved in Section \ref{lamplighters}, Section \ref{hyperbolic} and Section \ref{amenable} respectively.

\subsection{Lamplighters over Free Groups: Proof of Theorem \ref{theorem4}.} \label{lamplighters}

Here we will prove the following result, stated earlier in Theorem \ref{theorem4}:

\begin{theorem}[\bf Large Marginals and Exponential Decay] \label{theorem-Lamplighter} 
Let $H$ be a finite group, let $r\in\N$ and let $\skrig$ be any Cayley graph of $\Gamma=H\wr\F_r$. Then there exists a constant $C>0$ such that for every $\alpha<1$, there exists a $\Gamma$-invariant bond percolation $\P$ on $\skrig$ with $\E[\deg_\omega(o)] > \alpha \deg(o)$ and with
$$
\e^{-\beta |g|} \leq \tau(o,g) \leq \e^{-\gamma |g|} \quad \mbox{for all} \, \, g \in \Gamma,
$$
where $\beta,\gamma>0$ such that $\beta / \gamma \leq C$.
\end{theorem}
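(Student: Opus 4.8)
The plan is to invoke the implication (i) $\Rightarrow$ (ii) of Theorem \ref{theorem-ExponentialDecay}: it suffices to construct an \emph{invariant} action of $\Gamma = H\wr\F_r$ on a space with measured walls whose wall distance $D(o,g) = w(x_0,gx_0)$ is bounded below by a linear function of the word length $|g|$. Indeed, feeding such a wall space into Lemma \ref{lemma-GeneralConstruction} outputs, for each $p\in(0,1)$, a $\Gamma$-invariant percolation with $\mathrm e^{-(1-p)C'|g|}\leq\tau(o,g)\leq\mathrm e^{-(1-p)w(x_0,gx_0)}$ where $C'=\max_{g\sim o}w(x_0,gx_0)$, and a linear lower bound $w(x_0,gx_0)\geq 2c|g|$ then gives exponential decay with $\beta=(1-p)C'$, $\gamma=2c(1-p)$, hence $\beta/\gamma=C'/(2c)=:C$ independent of $p$; letting $p\to 1$ pushes the expected degree above $\alpha\deg(o)$. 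Throughout, let $T$ be the Cayley tree of $\F_r$ with respect to the free generators, identify $V(T)=\F_r$, write an element of $\Gamma$ as $g=(f,x)$ with $f\colon\F_r\to H$ finitely supported and $x\in\F_r$, set $o=(0,e)$, and for an edge $e$ of $T$ let $U_e\subset\F_r$ denote the component of $T\setminus\{e\}$ not containing the root.

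For the construction I take $X=\Gamma$ with the left action and, for each edge $e$, consider the \emph{state beyond $e$} map $s_e\colon\Gamma\to\Sigma_e:=\bigl(\bigoplus_{U_e}H\bigr)\times\{0,1\}$, $s_e(f,x)=(f|_{U_e},\1_{\{x\in U_e\}})$. On each $\Sigma_e$ I use the \emph{singleton} wall structure $\{\{\sigma\},\Sigma_e\setminus\{\sigma\}\}$ indexed by $\sigma\in\Sigma_e$ with counting measure, and pull these back along $s_e$ to a family $\skriw_e$ of walls on $\Gamma$; the total wall space is $\skriw=\bigsqcup_e\skriw_e$ with the resulting $\sigma$-finite counting measure $\mu$. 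The crucial point is invariance: left multiplication by $\gamma=(f_0,x_0)$ carries $U_e$ to $U_{x_0e}$ and intertwines $s_e$ with $s_{x_0e}$ through an \emph{affine} bijection $\Sigma_e\to\Sigma_{x_0e}$ (a coordinate shift on the lamps composed with the additive constant $f_0|_{x_0U_e}$, together with the position bit). Since \emph{any} bijection permutes singleton walls and preserves counting measure — and, crucially, we never demand that the trivial state be fixed — this exhibits $\gamma$ as a measure-preserving permutation of $\skriw$ sending $\skriw_e$ onto $\skriw_{x_0e}$, so the action is invariant in the sense of Subsection \ref{sec-generalized}.

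It remains to compute the wall distance. Because $s_e(o)=(0,0)$, the walls of $\skriw_e$ separating $o$ from $g=(f,x)$ are exactly the two singletons at $(0,0)$ and at $s_e(g)$, and they occur precisely when $s_e(g)\neq(0,0)$, i.e.\ when $U_e$ meets $\supp(f)\cup\{x\}$; as the root lies off $U_e$, this is exactly the condition that $e$ lie in the convex hull (minimal subtree) of $\supp(f)\cup\{o,x\}$ in $T$. Writing $\ell(g)$ for the number of its edges, we obtain $w(o,g)=2\,\ell(g)$. The main geometric content is then the comparison $\ell(g)\asymp|g|$, which is the switch–walk–switch description of the lamplighter metric on a tree: up to the bounded per-lamp switching cost afforded by finiteness of $H$, the word length equals the length of a shortest walk in $T$ from $o$ visiting every lit lamp and ending at $x$, which on a tree is $2\ell(g)-d_T(o,x)$. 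Thus $\ell(g)\leq|g|$ (the position walk of any length-$|g|$ word has length $\lesssim|g|$ and must cover the hull) and $|g|\lesssim\ell(g)$ (traverse the hull and switch, using that the number of lit lamps is at most $\ell(g)+1$), giving $\ell(g)\asymp|g|$ and hence linear growth. Since bi-Lipschitz changes of generating set preserve linear growth, the argument applies to \emph{any} Cayley graph $\skrig$ of $\Gamma$, and Lemma \ref{lemma-GeneralConstruction} yields the stated percolations.

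I expect the two delicate points to be the affine-invariance observation and the Steiner-tree comparison $\ell(g)\asymp|g|$. The former is exactly what makes the naive ``trivial versus nontrivial activity'' single-wall-per-edge construction fail — a single wall detecting activity beyond $e$ is not permuted by the lamp shifts $f_0$ — and forces the use of the full singleton (discrete-metric) wall structure on each $\Sigma_e$, which is invariant under \emph{all} bijections and so absorbs the affine twist.
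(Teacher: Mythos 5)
Your overall strategy — build an invariant wall structure on $H\wr\F_r$ with linearly growing wall distance and feed it into Lemma \ref{lemma-GeneralConstruction} / Theorem \ref{theorem-ExponentialDecay}(i)$\Rightarrow$(ii), then compare the wall distance to the word length via the travelling-salesman description of the lamplighter metric — is exactly the paper's. But your specific wall space is not $\Gamma$-invariant, and this is a genuine gap. The set $U_e$ is defined \emph{relative to the root}: it is the component of $T\setminus\{e\}$ not containing $e_{\F_r}$. Left translation by $\gamma=(f_0,x_0)$ sends $U_e$ to $x_0U_e$, which is a component of $T\setminus\{x_0e\}$ but by no means the one avoiding the root; whenever $e_{\F_r}\in x_0U_e$ one has $x_0U_e=V(T)\setminus U_{x_0e}$. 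In that case $\gamma\, s_e^{-1}(\psi,b)$ is the set of $(f',x')$ whose lamp configuration is prescribed on the \emph{root-containing} component of $T\setminus\{x_0e\}$ and whose marker bit is prescribed relative to that component; this is not of the form $s_{e'}^{-1}(\sigma')$ for any edge $e'$ (nor the complement of one), so the image partition does not belong to your family $\skriw$. The ``affine bijection $\Sigma_e\to\Sigma_{x_0e}$'' you invoke simply does not exist here, because the natural map induced by $x_0$ carries functions on $U_e$ to functions on the wrong component. You correctly anticipated that the lamp shift $f_0$ must be absorbed (hence the full singleton structure), but the obstruction actually comes from the translation part $x_0$ and the root-dependence of $e\mapsto U_e$. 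Note also that the obvious repair — symmetrizing by allowing both components of each edge — destroys the finiteness axiom: for $f\neq 0$, every sufficiently distant edge would contribute separating walls through its root-containing component, giving $w(o,g)=\infty$.

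The paper avoids this by using the de Cornulier--Stalder--Valette structure: to each \emph{oriented} edge $e$ (with $A_e$ the component it points away from, an assignment that is equivariant under translations because it depends only on the edge's orientation, not on the root) and each finitely supported $\psi\colon A_e^c\to H$ one attaches the set $E(e,\psi)=\{(x,\varphi)\colon x\in A_e,\ \varphi|_{A_e^c}=\psi\}$. The crucial design is that the marker is constrained to one side while the lamps are prescribed on the \emph{other} side: this makes $\gamma E(e,\psi)=E(x_0e,\,f_0|_{A_{x_0e}^c}\cdot(x_0\psi))$, so the family is permuted and counting measure is preserved, while for edges far from $\supp(\varphi)\cup\{e_{\F_r}\,,x\}$ both $o$ and $g$ fall into the same class, keeping $w(o,g)$ finite. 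With that wall space in place, your wall-distance computation and the comparison $\ell(g)\asymp|g|$ (the paper's Lemma \ref{lemma-WallDistanceLamplighterF2} proves $w(o,g)\geq\tfrac14|g|-\tfrac14$ by an analogous argument), as well as the reduction to Theorem \ref{theorem-ExponentialDecay} and the passage to arbitrary generating sets, all go through as you describe.
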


\vspace{1mm}

We proceed with the proof of Theorem \ref{theorem-Lamplighter} in the following three steps: we first recall the wreath product construction (see Section \ref{subsection-wreath}) and then recall a way of defining a wall structure on wreath products over free groups due to \cite{CSV08} (see Section \ref{subsection-WreathOverFree}). It was observed in the follow-up article \cite{CSV12} that the corresponding wall-distance can be quantified. In our probabilistically motivated set-up, we give a slightly different, self-contained estimate for the convenience of the reader. We finish with an application of Corollary~\ref{cor-QuantitativeDecay} to obtain the desired result.

\subsubsection{\bf Wreath Products of Finitely Generated Groups.}  \label{subsection-wreath}

Let $H$ and $G$ be any two groups. The (standard, restricted) {\em wreath product} $H\wr G$ is the semidirect product $H^{(G)} \rtimes G$, where 
$$
H^{(G)}=\bigotimes_{x\in G} H
$$ 
denotes the direct sum of copies of $H$ on which $G$ acts by left translation. 
In other words, the elements of $H\wr G$ are pairs $(\varphi,x)$ where $x\in G$ and where $\varphi \colon G \to H$ is a function with $f(y)$ equal to the identity $e_H$ of $H$ for all but finitely many $y\in G$. The group operation is given by
$$
(\varphi_1,x_1)(\varphi_2,x_2)=(\varphi_1 (x_2\varphi_2),x_1x_2),
$$
where $x\varphi(y)=\varphi(x^{-1}y)$ and $(\varphi \psi)(y)=\varphi(y)\psi(y)$. 

It is well-known that if $G$ and $H$ are finitely generated, then $H\wr G$ is finitely generated. In fact, given a finite generating set $S$ of $G$ and a finite generating set $T$ of $H$, the {\em canonical generating set} of $H\wr G$ is defined as follows: for every $t\in T$ let $\varphi_t \in H^{(G)}$ be the function with $\varphi_t(e_G)=t$ and $\varphi_t(x)=e_H$ for all $x\neq e_G$. Moreover, let $e_{H^{(G)}}\colon G\to H$ be the function which is equal to identity everywhere. Then 
$$
\Bigl\{ (e_{H^{(G)}},s) \colon s \in S \Bigr\} \sqcup \Bigl\{ (\varphi_t,e_G) \colon t \in T \Bigr\}
$$
is a finite generating set for $H\wr G$, see e.g.\ \cite{LS99}.

\subsubsection{\bf Wreath Products over Free Groups.}  \label{subsection-WreathOverFree}

In the following we restrict our attention to one particular example, the wreath product $\Gamma = \Z_2 \wr \F_2$, the classical {\em lamplighter group} over the free group on two generators. This is only for simplicity of the exposition and all arguments extend in straightforward manner to the general setting of Theorem \ref{theorem-Lamplighter}.

Let $\skrig$ denote the Cayley graph of $\Gamma$ with respect to the canonical generating set. We can picture this group as the $4$-regular tree with a lamp attached to each vertex. An element $(\varphi,x)\in\Gamma$ is then interpreted as lamps being switched on at the support of $\varphi$, denoted by $\supp(\varphi)$, and a marker standing at position $x$. If $a,b$ denote the canonical generators for $\F_2$, then the generators 
$$
\Bigl\{ (\bzero,a),(\bzero,a^{-1}),(\bzero,b),(\bzero,b^{-1}) \Bigr\},
$$
where $\bzero(x)=0$ for all $x$, correspond to moving the marker in the direction of the appearing generator while leaving the configuration of lamps untouched. The remaining generator $(\1_{e_{\F_2}},e_{\F_2})$ corresponds to switching the state of the lamp at the current position of the marker. For this Cayley graph it is well-known how to describe the graph distance: given two elements $(\varphi,x)$ and $(\psi,y)$ of $\Gamma$, a shortest path in $\skrig$ is such that the marker starts at position $x$, visits the position of all lamps which need to be switched and switches these configurations accordingly, and finally moves to position $y$. Note that the positions of the lamps which need to be switched are given by $\supp(\varphi \psi)$ and that $|\supp(\varphi \psi)|$ many switches have to be made. Therefore
$$
d( (\varphi,x),(\psi,y) ) = |\supp(\varphi \psi)| + \ell(x,\supp(\varphi\psi),y),
$$
where $\ell(x,\supp(\varphi\psi),y)$ denotes the length of a shortest path in the $4$-regular tree which starts at $x$, visits $\supp(\varphi\psi)$ completely and ends at $y$. In other words, the distance is essentially determined by the solution to the above {\em travelling salesman problem} in the Cayley graph of $\F_2$. When $\F_2$ is replaced with a different finitely generated group and the tree is replaced with the corresponding Cayley graph, this description remains correct. However, the particularly nice connection between {\em walks in the base graph} and {\em walls in the lamplighter graph} we describe next makes use of the tree-structure.

The following description of a proper, left-invariant structure of space with walls on $\Gamma$ is from~\cite{CSV08}. We adapt their notation to our more restrictive setting in order to readily obtain quantitative estimates of the associated wall distance. Let $T_4$ denote the canonical Cayley graph of $\F_2$, i.e.\ the $4$-regular tree, whose root is given by the vertex corresponding to $e_{\F_2}$. Note that every edge $e\in E(T_4)$ determines a wall by splitting $T_4$ into the two connected components of $T_4\setminus \{e\}$. For our purposes it is convenient to consider the set $E^{\to}(T_4)$ of oriented edges of $T_4$. For every oriented edge $e$, let $A_e$ denote the connected component of $T_4 \setminus \{e\}$ which the edge points away from. Given a finitely supported function $\psi \colon A_e^c \to \Z_2$, define
$$
E(e,\psi) \coloneqq \Bigl\{ (x,\varphi)\in\Gamma \colon x \in A_e, \varphi_{|A_e^c} = \psi \Bigr\}.
$$
A family of walls on $\Gamma$ is then defined by
$$
\skriw \coloneqq \Bigl\{ \{E(e,\psi),E(e,\psi)^c\} \colon e \in E^{\to}(T_4), \psi \colon A_e^c \to \Z_2 \, \, \mbox{finitely supported} \Bigr\}.
$$

It was proven in \cite[Theorem 1]{CSV08} that $(\Gamma,\skriw)$ is a space with walls on which $\Gamma$ acts properly via left translations. We are interested in a quantitative version, which is given by the following lemma and was also observed in a more general context in \cite[Proposition 7.2]{CSV12}.

\begin{lemma}[\bf Estimate of the Wall-Distance for Lamplighters over Free Groups]\label{lemma-WallDistanceLamplighterF2} 
Let $\Gamma\coloneqq\Z_2 \wr \F_2$. Consider the proper action of $\Gamma$ on the space with walls $(\Gamma,\skriw)$ with
$$
\skriw \coloneqq \Bigl\{ E(e,\psi) \colon e \in E^{\to}(T_4), \psi \colon A_e^c \to \Z_2 \, \, \mbox{finitely supported} \Bigr\}.
$$
Set $o\coloneqq(\bzero,e_{\F_2})$ and consider the canonical Cayley graph of $\Gamma$. Then, for every $(\varphi,x)\in\Gamma$,
$$
w(o,(\varphi,x))\geq\frac{1}{4} |(\varphi,x)|-\frac{1}{4}.
$$
\end{lemma}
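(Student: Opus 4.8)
The plan is to bound $w(o,(\varphi,x))$ from below by producing an explicit injective family of separating walls indexed by the edges of a suitable subtree of $T_4$. Write $o=(\bzero,e_{\F_2})$, set $\ell\coloneqq \ell(e_{\F_2},\supp\varphi,x)$ for the length of the shortest walk in $T_4$ that starts at $e_{\F_2}$, visits $\supp\varphi$, and ends at $x$, and recall from the distance formula that $|(\varphi,x)|=|\supp\varphi|+\ell$. Let $\mathcal{T}$ be the convex hull (minimal spanning subtree) in $T_4$ of the finite terminal set $R\coloneqq\supp\varphi\cup\{e_{\F_2},x\}$. Traversing a tree to visit a prescribed vertex set uses every edge of its convex hull twice, except those on the path between the two endpoints of the walk, so $\ell=2|E(\mathcal{T})|-d_{T_4}(e_{\F_2},x)$ and in particular $|E(\mathcal{T})|\geq \ell/2$. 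The goal is therefore reduced to showing $w(o,(\varphi,x))\geq |E(\mathcal{T})|$.

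The heart of the argument is the following choice of walls. First I would fix $f\in E(\mathcal{T})$ and write $U_f,V_f$ for the two components of $T_4\setminus\{f\}$, labelled so that $e_{\F_2}\in U_f$; then orient $f$ so that $A_{\vec f}=U_f$ and consider the wall $\{E(\vec f,\bzero),E(\vec f,\bzero)^c\}$ attached to the identically-zero function on $A_{\vec f}^c=V_f$. Since $e_{\F_2}\in U_f$ and the lamps of $o$ vanish on $V_f$, one has $o\in E(\vec f,\bzero)$. On the other hand, because $f$ lies in the convex hull $\mathcal{T}$, deleting it leaves a terminal of $R$ on the side $V_f$; as $e_{\F_2}\in U_f$, this terminal is either $x$ or a point of $\supp\varphi$. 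In the first case $x\notin U_f$, and in the second $\varphi|_{V_f}\neq\bzero$; in either case the membership condition defining $E(\vec f,\bzero)$ fails, so $(\varphi,x)\notin E(\vec f,\bzero)$. Hence this wall separates $o$ from $(\varphi,x)$.

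Next I would check that $f\mapsto\{E(\vec f,\bzero),E(\vec f,\bzero)^c\}$ is injective on $E(\mathcal{T})$. The key observation is that this wall remembers the bipartition $\{U_f,V_f\}$: testing against configurations with all lamps off recovers $U_f=\{y\in\F_2\colon (\bzero,y)\in E(\vec f,\bzero)\}$, and the other side from the complement. Since distinct edges of a tree induce distinct vertex-bipartitions, distinct $f$ yield distinct (and non-complementary) partitions, so the family really has cardinality $|E(\mathcal{T})|$, giving $w(o,(\varphi,x))\geq |E(\mathcal{T})|\geq \ell/2$.

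It then remains to compare $\ell$ with $|(\varphi,x)|$. A walk of length $\ell$ meets at most $\ell+1$ distinct vertices and must meet all of $\supp\varphi$, so $|\supp\varphi|\leq \ell+1$ and $|(\varphi,x)|=|\supp\varphi|+\ell\leq 2\ell+1$; combining yields $w(o,(\varphi,x))\geq \ell/2\geq \tfrac14(|(\varphi,x)|-1)=\tfrac14|(\varphi,x)|-\tfrac14$, and the same reasoning goes through verbatim with $\Z_2$ replaced by a finite group $H$ and $\F_2$ by $\F_r$. I expect the main obstacle to be the injectivity bookkeeping: because this is a lower bound, over-counting is fatal, so I must ensure the walls are pairwise distinct \emph{as partitions} (not accidental complements), which is precisely why I recover the bipartition from the wall rather than merely counting index pairs. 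The secondary technical point is the convex-hull identity $\ell=2|E(\mathcal{T})|-d_{T_4}(e_{\F_2},x)$, which I would justify by the standard tree-traversal argument.
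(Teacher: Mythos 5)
Your proof is correct and yields exactly the stated bound. The overall skeleton coincides with the paper's: both arguments reduce the lemma to showing $w(o,(\varphi,x))\geq \ell/2$ for the travelling--salesman length $\ell=\ell(e_{\F_2},\supp(\varphi),x)$, and both then use $|\supp(\varphi)|\leq \ell+1$, hence $|(\varphi,x)|\leq 2\ell+1$, to conclude. Where you genuinely differ is in how the $\ell/2$ separating walls are produced. The paper fixes a shortest walk $\rho$ realizing $\ell$ and runs a case analysis on its edges according to whether they are traversed once or twice: an edge traversed twice points away from $x$ and contributes a wall of the form $E(e_k,\varphi_{|A_k^c})$, which contains $(\varphi,x)$ but not $o$, while an edge traversed once points away from $e_{\F_2}$ and contributes a wall $E(e_i,\bzero_{|A_i^c})$ containing $o$ but not $(\varphi,x)$; the factor $1/2$ comes from each edge being traversed at most twice. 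You instead pass to the convex hull $\mathcal{T}$ of $\supp(\varphi)\cup\{e_{\F_2},x\}$, use the standard tree-traversal identity to get $|E(\mathcal{T})|\geq \ell/2$, and attach to every edge of $\mathcal{T}$ a wall of one uniform type, namely $E(\vec f,\bzero)$ with the half-tree $A_{\vec f}$ chosen to contain $e_{\F_2}$, separation being guaranteed because the far side of any hull edge must contain either $x$ or a lit lamp. This buys you a cleaner and fully explicit injectivity argument (each wall determines the bipartition $\{U_f,V_f\}$ by restricting to the all-lamps-off slice, so distinct edges give distinct, non-complementary partitions), a point the paper's proof leaves implicit; the cost is importing the convex-hull identity $\ell=2|E(\mathcal{T})|-d_{T_4}(e_{\F_2},x)$, of which you in fact only need the inequality $\ell\leq 2|E(\mathcal{T})|$. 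Both routes are valid; yours is tidier on the counting, the paper's stays entirely inside the chosen walk.
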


\proof Let $(\varphi,x)\in\Gamma$ and recall from above that
$$
|(\varphi,x)| = d(o,(\varphi,x))=|\supp(\varphi)| + \ell(e_{\F_2},\supp(\varphi),x).
$$
To shorten notation, set $\ell=\ell(e_{\F_2},\supp(\varphi),x)$. Let $\rho=\{e_1,\ldots,e_{\ell}\}$ be a shortest path of edges from $e_{\F_2}$ to $x$ which visits $\supp(\varphi)$ completely. Note that every position in $\supp(\varphi)$ except the origin has to be visited by $\rho$, thus $|\supp(\varphi)| \leq 1 + \ell.$
In particular
\begin{equation} \label{eq-travellingsalesmanbound}
|(\varphi,x)| \leq 2\ell + 1.
\end{equation}
We claim that
\begin{equation} \label{eq-walkwallbound}
\ell \leq 2 w(o,(\varphi,x)).
\medskip
\end{equation}
To see this, we describe a subset $\rho'$ of $\rho$, consisting of at least half of the original edges, such that for each in $\rho'$ there is a unique wall separating $o$ and $(\varphi,x)$: let $e_i$ be an edge in $\rho$ which appears for the first time. It is clear from the assumptions on $\rho$ that each edge can appear at most twice and we treat the resulting two cases separately.
\begin{enumerate}
\item Assume that $e_i$ appears a second time, say at $e_j$ with $i\leq j\leq\ell$. Since the walk takes place on a tree, it follows that every edge $e_k$ with $i\leq k\leq j$ appears exactly twice. Treating each $e_k$ as an oriented edge pointing in the direction of the next appearing edge in $\rho$ or towards $x$ in case $i=\ell-1$, we see that this case describes the following situation: the path $\rho$ starts at endpoint $y$ of $e_i$, proceeds in the direction of $e_i$ and explores a part of the corresponding branch of the tree, before returning to $y$ and leaving in the direction of one of the remaining branches originating from $y$. In particular the edge $e_i$ points away from the final position $x$ of the marker. The same holds for every $e_k$ with $i\leq k\leq j$ which appears for the first time. Now fix any such edge $e_k$. Clearly the only reason for its appearance can be that there is an element $z\in\supp(\varphi)$ which lies in the direction it is pointing. Recalling that $A_k\coloneqq A_{e_k}$ denotes the connected component of $T_4\setminus\{e_k\}$ which $e_k$ points away from, we therefore have that $x\in A_{k}$ and that $z\in\supp(\varphi_{|A_k^c})$. It follows that the set
$$
E(e_k,\varphi_{|A_k^c}) = \Bigl\{ (v,\psi)\in\Gamma \colon v \in A_{k}, \psi_{|A_k^c} = \varphi_{|A_k^c} \Bigr\}
$$
contains $(x,\varphi)$, but not $o=(\bzero,e_{\F_2})$ because $\varphi_{|A_k^c}(z)=1\neq\bzero(z)$. We have thus exhibited for half of the edges $e_i,\ldots,e_j$ a unique wall separating $o$ and $(\varphi,x)$.
\item Assume that $e_i$ appears exactly once. We again treat $e_i$ as an oriented edge pointing towards the next appearing edge in $\rho$ or towards $x$ in case $i=\ell-1$. Since the path started at the identity and since $e_i$ appears for the first time, $e_i$ points away from $e_{\F_2}$, i.e.\ $e_{\F_2}\in A_i\coloneqq A_{e_i}$. Since $e_i$ is not traversed twice, the final position $x$ of the marker lies in $A_{e_i}^c$. In particular
$$
E(e_i,\bzero_{|A_i^c}) = \Bigl\{ (v,\psi)\in\Gamma \colon v \in A_{i}, \psi_{|A_i^c} = \bzero_{|A_k^c} \Bigr\}
$$
contains $o=(\bzero,e_{\F_2})$, but not $(x,\varphi)$ because $x\notin A_i$. We have thus exhibited for this edge a unique wall separating $o$ and $(\varphi,x)$.
\end{enumerate}
Applying the above procedure iteratively proves Inequality (\ref{eq-walkwallbound}). Combined with Inequality (\ref{eq-travellingsalesmanbound}), this concludes the proof of Lemma \ref{lemma-WallDistanceLamplighterF2}.
\eproof

\noindent{\bf Proof of Theorem \ref{theorem-Lamplighter}.} Combining Lemma \ref{lemma-WallDistanceLamplighterF2} with Corollary \ref{cor-QuantitativeDecay} yields the desired result. \qed

\subsection{Isometry Groups of Hyperbolic Space: Proof of Theorem \ref{theorem5}.} \label{hyperbolic}

In this section we discuss a relevant class of examples in which the percolations constructed using Theorem \ref{theorem-GeneralConstruction} can be understood in a precise geometric sense. These are finitely generated groups with a proper isometric action on real hyperbolic space.

\subsubsection{\bf Hyperbolic Space as a Space with Measured Walls.} \label{subsection-HyperbolicWalls}

The following observations are due to \cite[Section 3]{CMV04} building on work of Robertson \cite{R98}, see also \cite[Example 3.7]{CDH10} for the notation we adopt here. Let $\H^n$ be real $n$-dimensional hyperbolic space and let $\Isom(\H^n)$ be its isometry group. Let $\skriw_{\H^n}$ denote the set of all closed or open geometric half-spaces in $\H^n$ with boundary an isometric copy of $\H^{n-1}$. Each half-space defines a wall by partitioning the space into the half-space itself and its complement. The group $\Isom(\H^n)$ is unimodular, acts transitively on $\skriw_{\H^n}$ and the stabilizer of a given wall is unimodular. Thus $\skriw_{\H^n}$ carries a non-zero $\Isom(\H^n)$-invariant Borel measure $\mu_{\H^n}$. Note that the set $\skriw_{\H^n}(z,w)$ of walls separating two points $z,w\in\H^n$ is relatively compact, which implies $\mu_{\H^n}(\skriw_{\H^n}(z,w))<\infty$. Hence the $4$-tuple $(\H^n,\skriw_{\H^n},\skrib(\skriw_{\H^n}),\mu_{\H^n})$ defines a space with measured walls. By Crofton's formula \cite[Proposition 2.1]{R98}, there exists a constant $\lambda>0$ such that, for every $z,w\in\H^n$,
\begin{equation} \label{eq-Crofton}
\mu_{\H^n} (\skriw_{\H^n}(z,w)) = \lambda d(z,w),
\end{equation}
where $d(z,w)$ denotes the hyperbolic distance.

\subsubsection{\bf Hyperplane Percolation.} \label{subsection-HyperplanePercolation}

We now suppose that $\skrig$ is either a graph embedded in the hyperbolic plane $\H^2$ such that the automorphisms of $\skrig$ extend to isometries of $\H^2$ or that $\skrig$ is the Cayley graph of a discrete co-compact subgroup $\Gamma$ of isometries of $\mathbb H^n$. Let $\eta$ be a Poisson process on $\skriw_{\H^n}$ with intensity measure $\gamma \mu_{\H^n}$ for some $\gamma>0$. Define a bond percolation on $\skrig$ as in Theorem \ref{theorem-GeneralConstruction}, i.e.\ for every half-space contained in $\eta$, delete all edges whose endpoints are separated by the boundary of this halfspace. We refer to this bond percolation as the {\em hyperplane percolation} on $\skrig$ induced by $\eta$. Note that $\Isom(\H^n)$-invariance of $\eta$ implies $\Aut(\skrig)$-invariance of the corresponding hyperplane percolation in the first case and $\Gamma$-invariance in the second case.

\begin{theorem}[\bf Large Marginals and Exponential Decay] \label{theorem-InfManyInfClusters}
Let $\skrig$ be a transitive, nonamenable, planar graph with one end, resp.~let $\skrig$ be the Cayley graph of a discrete co-compact subgroup $\Gamma$ of ${\rm Isom}(\mathbb H^n)$. Then for every $\alpha<1$, there exists an $\Aut(\skrig)$-invariant, resp.~$\Gamma$-invariant, bond percolation $\P$ on $\skrig$ with $\E[\deg_\omega(o)] > \alpha \deg(o)$ and such that $\tau(o,\cdot)$ has exponential decay.
\end{theorem}

In this case the construction is ergodic (see e.g.~\cite[Remark 2.8]{R98}), hence using Proposition~\ref{prop:ConsequenceVanishingTPF} we may add to the statement of Theorem~\ref{theorem-InfManyInfClusters} the condition that $\mathbb P$ has infinitely many infinite clusters a.s. In the planar case, it is also possible to prove this using \cite[Theorem 3.1]{BS01} and either the hyperbolic geometry or the Harris-FKG inequality for the Poisson process.

\begin{remark}[\bf Co-Compact Fuchsian Groups]
Theorem~\ref{theorem-InfManyInfClusters} applies to planar Cayley graphs of nonamenable one-ended groups. In particular it applies to the examples considered by Lalley \cite{L98,L01}, namely planar Cayley graphs of co-compact Fuchsian groups. For these groups, it was shown in \cite{BS01,L98} that Bernoulli bond percolation exhibits a double phase transition. In this sense, these groups behave quite differently from infinitely ended ones, which all have uniqueness threshold $p_u=1$. With respect to general invariant bond percolations, this difference between one-ended and infinitely-ended nonamenable planar Cayley graphs disappears in the sense that there exist invariant percolations with marginals arbitrarily close to $1$ but with infinitely many infinite clusters a.s. We may also point out that for co-compact Fuchsian groups more refined estimates of the exponential decay in Theorem~\ref{theorem-InfManyInfClusters} are available. More precisely, consider a co-compact Fuchsian group $\Gamma$ and recall that there exists a compact fundamental polygon $T\subset\H_2$ for $\Gamma$, i.e.\ a compact, connected subset, bounded by finitely many geodesics and such that the translates $\{gT\}_{g\in\Gamma}$ tesselate $\H_2$. In particular there are finitely many group elements for which $gT$ and $T$ intersect. These are called side-pairing transformations and constitute a generating set for $\Gamma$. Let $\skrig$ be the corresponding Cayley graph, which can be identified with the dual graph of the tiling $\{gT\}$. Since the tiling is locally finite, we have that for every radius $r>0$, there exists a constant $0<D(r)<\infty$ such that no hyberbolic ball of radius $r$ intersects more than $D(r)$ tiles. Now pick $z_0\in T\setminus\partial T$ and set $\rho\coloneqq \inf_{g,h\in\Gamma} d(gz_0,hz_0)$. It follows by elementary arguments that 
$$
\frac{\rho}{2D(\rho)}|g| \leq d(z_0,gz_0) \leq \diam(T) |g| \quad \mbox{for all} \, \, g \in \Gamma\setminus\{o\},
$$
see e.g.\ \cite[Proposition 1.10]{L01}. 
\nopagebreak {\hfill\rule{2mm}{2mm}}
\end{remark}

\proof[{\bf Proof of Theorem \ref{theorem-InfManyInfClusters}}] Consider first the case that $\mathcal G$ is a transitive, nonamenable, planar graph with one end. By \cite[Proposition 2.1]{BS01}, the assumptions guarantee that $\Aut(\skrig)$ is discrete, hence unimodular (since for a discrete group, the counting measure is Haar measure and is both left- and right-invariant), and that $\skrig$ can be embedded in the hyperbolic plane $\H^2$ in such a way that the action of $\Aut(\skrig)$ extends to an isometric action on $\H^2$. Moreover, the embedding may be taken such that the dual graph is quasi-transitive, hence the embedding is a quasi-isometry. Identify $\skrig$ with its embedding. Let $p\in(0,1)$, let $\eta_p$ be a Poisson process on $\skriw_{\H^2}$ with intensity measure $(1-p)\mu_{\H^2}$ and let $\xi_p$ denote the induced hyperplane process on $\skrig$. Let $\P_p$ denote the law of $\xi_p$, which defines an $\Aut(\skrig)$-invariant bond percolation. Since the embedding is a quasi-isometry, it follows from Crofton's formula, see Equation (\ref{eq-Crofton}), that $\tau_{\xi_p}(o,\cdot)$ has exponential decay and that
$$
\lim_{p\to1} \E[\deg_{\xi_p} (o)]= \deg(o),
$$
which finishes the proof. The case that $\mathcal G$ is the Cayley graph of a discrete co-compact subgroup $\Gamma$ of ${\rm Isom}(\mathbb H^n)$ follows similarly using the Milnor-Schwarz Lemma, see e.g.\ \cite[Theorem~1.18]{R03}.
\eproof

\subsection{Spaces with Measured Walls for Amenable Groups: Proof of Proposition \ref{prop1}.} \label{amenable}

Recall that amenable groups have the Haagerup property \cite{BCV95}. In this section, we show that under the additional assumption that $\Gamma$ is amenable, there exists a natural space with measured walls which admits a proper, invariant $\Gamma$-action, such that our constructive proof of Theorem \ref{theorem-GroupsWithTheHaagerupProperty} recovers the corresponding statement of Theorem \ref{theorem-AmenableGroups} from \cite{BLPS99} -- let us point out that Theorem \ref{theorem-AmenableGroups} may also be proved by the Rokhlin lemma of Ornstein and Weiss \cite{OW87} and results of Adams and Lyons \cite{AL91} and that \cite[Theorem 5.1]{BLPS99} provides a more general statement.

\begin{prop}[Recovering the Amenable Case] \label{prop-SpecialCaseAmenability} 
Let $\skrig$ be the Cayley graph of a finitely generated amenable group $\Gamma$. Consider the proper, invariant action of $\Gamma$ on the space with measured walls $(X,\skriw,\skrib,\mu)$ described in \cite[Proof of Theorem I(5)]{CMV04}. For $p\in(0,1)$, let $\eta_p$ be a Poisson process on $\skriw$ with intensity $(1-p)\mu$ and let $\omega_p$ denote the associated invariant bond percolation on $\skrig$ as in the proof of Theorem \ref{theorem-GroupsWithTheHaagerupProperty}. Then $\omega_p$ has no infinite cluster.
\end{prop}

\proof We first need to introduce the space with measured walls from \cite{CMV04}. To that end, we recall a notion of product for spaces with measured walls from \cite[Definition 5]{CMV04}. Let $(X_n,\skriw_n,\skrib_n,\mu_n)_{n=1}^\infty$ be a sequence of spaces with measured walls. Distinguish a point $x_n^0 \in X_n$ for every $n\in\N$. Now define the set
$$
X \coloneqq \Biggl\{ x = (x_n)_{n=1}^\infty \in \prod_{n=1}^\infty X_n \colon \sum_{n=1}^\infty w_n(x_0^{(n)},x_n) < \infty \Biggr\}.
$$
For each $n\in\N$, let $p_n\colon X \to X_n$ denote the projection onto the $n$'th coordinate. Note that if $n\in\N$ and $\{A,A^c\}\in\skriw_n$, then
$$
W(\{A,A^c\}) \coloneqq p_n^{-1}\bigl( \{A,A^c\} \bigr) = \Bigl\{ \bigl\{ x \in X \colon x_n \in A \bigr\}, \bigl\{ x \in X \colon x_n \in A^c \bigr\} \Bigr\}
$$
defines a wall on $X$. Let $\skriw$ denote the (disjoint) union of such walls, which can be identified with the coproduct of $\skriw_n$. Let $\skrib$ be the $\sigma$-algebra generated by $p_n^{-1}(\skrib_n)$ and let $\mu$ be the unique measure on $\skriw$ whose restriction to $p_n^{-1}(\skrib_n)$ coincides with $\mu_n$. Then $(X,\skriw,\skrib,\mu)$ is a space with measured walls which is called the {\em product} of the {\em pointed} spaces $(X_n,\skriw_n,\skrib_n,\mu_n,x_n^0)$. 

Now let $(K_n)_{n=1}^\infty$ be a sequence of non-empty finite subsets of $\Gamma$ increasing to $\Gamma$. Assuming amenability, there exists a {\em F{\o}lner sequence} for $(K_n)$, i.e.\ a sequence $(A_n)_{n=1}^\infty$ of non-empty finite subsets of $\Gamma$ such that, for every $n\in\N$ and $g\in K_n$,
$$
\frac{|gA_n \Delta A_n|}{|A_n|} < 2^{-n}.
$$
For each $n\in\N$, form the space with measured walls $(\Gamma,\skriw_n,\skrib_n,\mu_n)$, where the set of walls is
$$
\skriw_n = \Bigl\{ \bigl\{gA_n^{-1},(gA_n^{-1})^c \bigr\} \colon g \in \Gamma \Bigr\},
$$
the $\sigma$-field $\skrib_n$ is the power set and $\mu$ is counting measure normalized by $n/|A_n|$, see \cite[Example 1]{CMV04}. Observe that the corresponding wall distance is given by 
$$
w_n(g,h) = \frac{n}{|A_n|} |gA_n \Delta h A_n|,
$$
which is clearly bounded by $n$. In order to obtain an action on a space with unbounded wall distance we therefore need to {\em pack up} (a formulation due to \cite{CMV04}) the sequence of spaces described above. To that end, distinguish the point $x_0^n=o$ for each $n\in\N$. Let $(X,\skriw,\skrib,\mu)$ be the product of the pointed spaces with measured walls $(\Gamma,\skriw_n,\skrib_n,\mu_n,o)$. It is then not difficult to check that $\Gamma$ acts on $X$ by the diagonal action, see \cite[Proof of Theorem I(5)]{CMV04}. Distinguish the point $x_0=(o,o,\ldots)\in X$. 

With these technical preliminaries established, we are now ready to conclude the proof. Since $\omega_p$ is a $\Gamma$-invariant bond percolation, it suffices to prove that the cluster of the identity is finite a.s. Consider the set of walls
$$
\skriv \coloneqq \bigsqcup_{n=1}^\infty \Bigl\{ p_n^{-1}\bigl(\bigl\{gA_n^{-1},(gA_n^{-1})^c \bigr\}\bigr) \colon g\in\Gamma \, \, \mbox{such that} \, \, o \in gA_n^{-1} \ \Bigr\}.
$$
In the particular setting described here, we claim that $\skriv$ may be interpreted as a particular set of all walls which separate $o$ from $\infty$ in $\skrig$: let $n\in\N$ and $g\in\Gamma$ such that $o \in gA_n^{-1}$. If the wall 
$$
W=p_n^{-1}\bigl(\bigl\{gA_n^{-1},(gA_n^{-1})^c \bigr\}\bigr)
$$
satisfies $W\in\supp(\eta_p)$, then, in the configuration $\omega_p$, the edge boundary of a finite neighborhood of $o$ in $\skrig$ is completely removed. Therefore on the event that $\eta_p(\skriv)>0$, the cluster of $o$ in $\omega_p$ is finite. But now, since $\eta_p$ is a Poisson process with intensity $(1-p)\mu$, the fact that
\begin{align*}
\mu(\skriv) & = \sum_{n=1}^\infty \mu_n \Bigl( \Bigl\{ \bigl\{gA_n^{-1},(gA_n^{-1})^c \bigr\} \colon g\in\Gamma \, \, \mbox{such that} \, \, o \in gA_n^{-1} \Bigr\} \Bigr) = \sum_{n=1}^\infty \frac{n}{|A_n|} |A_n| = \infty,
\end{align*}
already implies that $\P(\eta_p(\skriv)>0)=1$. Thus $\omega_p$ has no infinite cluster a.s. \eproof

\subsection{Outlook and Open Questions.} \label{section-outlook}

We provide an outlook on future directions motivated by our results and highlight some important open questions related to the main theme of this article.

Towards further progress on the general program of understanding the interplay between geometric group theory and probability using percolation, it remains an interesting open problem to find characterizations of various other classes of groups. Let us highlight the following particularly important case.

\begin{question}[{Russell Lyons, cf.\cite[Sec.~10, p.~1123]{L00}}]
    Does there exist a characterization of hyperbolic groups through percolation?
\end{question}

Note that there are hyperbolic groups with the Haagerup property as well as hyperbolic groups with property (T). In a similar direction and very concretely related to the present methods, the following remains open (recall Remark \ref{remark-L1Compression} for the context about $L^1$-compression).

\begin{question}
    Does the equivariant $L^1$-compression exponent admit a characterization through invariant percolation? 
\end{question}

We believe that a natural candidate for characterizing the $L^1$-compression exponent through percolation is provided by condition (ii) in Corollary \ref{cor-QuantitativeDecay} with~$\rho=\rho_\alpha$. Notably, this would provide the first probabilistic characterization of this exponent. The following intermediate result is proved in \cite{MR24}: recall that $\psi\colon\Gamma\to[0,\infty)$ is {\em measure definite function} if there exists a measure space $(\Omega,\skrib,\mu)$ and $S\colon\Gamma\to\skrib$ such that $\psi(g^{-1}h)=\mu(S_g \Delta S_h)$.

\begin{theorem}[\bf Quantitative Two-Point Function Decay and Compression Functions] \label{theorem-ExponentialDecay}
Let $\skrig$ be the Cayley graph of a finitely generated group $\Gamma$ and let $\rho\colon[0,\infty]\to[0,\infty]$ be a function with $\{\rho=0\}=\{0\}$. Then each of the following conditions implies the next one:
\begin{enumerate}
\item[{\rm(i)}] There exists an invariant action of $\Gamma$ on a space with measured walls with $\rho$-growth;
\item[{\rm(ii)}] There exists $C>0$ such that for every $\alpha<1$, there exists a $\Gamma$-invariant bond percolation $\P$ on $\skrig$ with $\E[\deg_\omega(o)]>\alpha\deg(o)$ and with $(C,\rho)$-exponential decay;
\item[{\rm(iii)}] There exists a measure definite function on $\Gamma$ which admits $\rho$ as a compression function.
\end{enumerate}
Moreover, if $\Gamma$ is amenable or if the measure definite function in (iii) can be assumed to be the square-root of a conditionally negative definite function, then all three conditions are equivalent.
\end{theorem}

\begin{remark}[Connection with the equivariant $L^1$-compression exponent, continued] 
Condition (iii) in Theorem \ref{theorem-ExponentialDecay} may be interpreted as "almost'' condition (i), as we will now explain: observe that every measure definite function $\psi$ on $\Gamma$ is of the form
\begin{equation} \label{eq-L1Rem-MD}
\psi(g^{-1}h) = \Vert f(g)-f(h) \Vert_E
\vspace{1mm}
\end{equation}
for an $L^1$-space $E$ and a map $f\colon\Gamma \to E$. While this is a formally weaker requirement than being of the form  \eqref{eq-L1Rem-Equ}, it is an open question whether the two coincide, see~\cite{CSV12}. In particular, proving that these notions coincide provides one possible way of obtaining the desired probabilistic characterization of the equivariant $L^1$-compression exponent (in this vein, note that Theorem \ref{theorem-ExponentialDecay} does entail the desired characterization  for amenable groups).  On a related note, since the square-root of any conditionally negative definite kernel is of the form \eqref{eq-L1Rem-Equ}, as described in Section \ref{subsection-MeasuredWalls}, condition~(iii) with $\rho=\rho_\alpha$ always implies condition (i) with $\rho=\rho_{\alpha/2}$.  \nopagebreak {\hfill\rule{2mm}{2mm}}  
\end{remark}

\begin{remark}
    Let us comment on the proof of Theorem \ref{theorem-ExponentialDecay}. Suppose that the implication "(ii)$\Rightarrow$(iii)'' holds. Since the square-root of a conditionally negative definite kernel is the wall distance associated to an invariant action of $\Gamma$ on a space with measured walls, condition (iii) implies condition (i) under the first additional assumption. Under the other additional assumption of amenability, the fact that every measure definite function is the wall distance associated to an {\em invariant} action on a space with measured walls is due to \cite[Proposition 2.8]{CSV12}. Hence, only "(ii)$\Rightarrow$(iii)'' requires justification. A~preliminary version of this article featured a proof, however, this proof does not use the underlying group structure and, after completion, the authors realized that it is in fact an instance of a more general phenomenon. In the more general form it can be applied to provide a characterization of the  (non-equivariant) $L^1$-compression exponent using the strategy presented above and building on the ideas developed in this article, see \cite{MR24}. Since the implication "(ii)$\Rightarrow$(iii)'' in Theorem \ref{theorem-ExponentialDecay} is not used in this article, the proof may be found in the follow-up work \cite[Theorem 3.3]{MR24}.
\end{remark}

We do not know whether the conditions in Theorem \ref{theorem-ExponentialDecay} are in fact equivalent. There are prominent examples of non-amenable groups which satisfy all three conditions: to name two, these include $\Z_2 \wr \F_2$ by Theorem \ref{theorem-Lamplighter} and co-compact Fuchsian groups by Theorem \ref{theorem-InfManyInfClusters}.

The geometry of $L^1$-spaces is particularly closely related to percolation: while the current method can be used to extract information about other $L^p$-target geometries with $p>1$, see~\cite[Remarks 4.4 \& 4.11]{MR24} for more details in this direction, the situation is certainly less clear and seems to be completely open for other target geometries.

\begin{problem}
    Further investigate the interplay between invariant percolation and the property of admitting equivariant embeddings with ''good'' compression into other target geometries, for instance $L^p$ with $p>1$.
\end{problem}

In another direction, it would be interesting to characterize the Haagerup property, resp.~property~(T), through quantities associated to Bernoulli percolation such as $\pv$, perhaps by additionally allowing to replace $\skrig$ by one of its invariant spanning subgraphs.

\begin{question} \label{qu:HaagerupBernoulli}
    Are the conditions in Theorem \ref{theorem-Spanning} equivalent to the Haagerup property? Similarly, does there exist a characterization of property~(T) in terms of natural properties of Bernoulli percolation on invariant spanning subgraphs?
\end{question}

\noindent{\bf Acknowledgement:}
We thank Russell Lyons for reading the preprint version of the article carefully and for providing corrections and very valuable comments, leading in particular to a clarification of Example \ref{example-WordLengthFree}. We also thank Damien Gaboriau for very helpful comments, encouragement and feedback. Finally, we thank an anonymous referee for their careful reading, very helpful comments, for pointing out Proposition~\ref{prop:Robust}  and for pointing out Proposition~\ref{prop:ConsequenceVanishingTPF} and outlining its proof. 
Research of both authors is funded by the Deutsche Forschungsgemeinschaft (DFG) under Germany's Excellence Strategy EXC 2044-390685587, Mathematics M\"unster: Dynamics-Geometry-Structure.

\end{document}